\newcommand{\leqnomode}{\tagsleft@true}
\newcommand{\reqnomode}{\tagsleft@false}
\newtheorem{Theorem}{Theorem}[section]
\newtheorem{Problem}{Problem}
\newtheorem{Lemma}[Theorem]{Lemma}
\newtheorem{Corollary}[Theorem]{Corollary}
\newtheorem{Definition}[Theorem]{Definition}
\newcommand\supp{\mathop{\rm supp}}
\theoremstyle{definition}
\newtheorem{Remark}[Theorem]{Remark}
\begin{document}
	
	\title{Daugavet and diameter two properties in Orlicz-Lorentz spaces}
	\keywords{Banach function space, Daugavet points, Daugavet property, $\Delta$-points, Diameter two properties, Diametral diameter two properties, M-ideal, octahedral, Orlicz-Lorentz space, Radon-Nikod\'ym property,  uniformly nonsquare points}
	\subjclass[2010]{Primary 46B04; Secondary 46B20, 46E30, 47B38}
	\thanks{
		The second author was supported by Basic Science Research Program through the National Research Foundation of Korea(NRF) funded by the Ministry of Education, Science and Technology [NRF-2020R1A2C1A01010377].
		}
		\thanks{			
		The third author was supported by Basic Science Research Program through the National Research Foundation of Korea(NRF) funded by the Ministry of Education, Science and Technology [NRF-2020R1A2C1A01010377].}
	
	\author{Anna Kami\'{n}ska}
	\address{Department of Mathematical Sciences,
		The University of Memphis, TN 38152-3240}
	\email{kaminska@memphis.edu}
	
	\author{Han Ju Lee}
	\address{Department of Mathematics Education, Dongguk University - Seoul, 04620 (Seoul), Republic of Korea}
	\email{hanjulee@dgu.ac.kr}
	
	\author{Hyung-Joon Tag}
	\address{Department of Mathematics Education, Dongguk University - Seoul, 04620 (Seoul), Republic of Korea}
	\email{htag@dongguk.edu}
	\date{\today}
	
	\maketitle

	\begin{center} Dedicated to the  Organizers of the Conference "Current  Trends in Abstract and Applied Analysis", 
		May $12-15$, 2022, Ivano-Frankivst, Ukraine
	\end{center}

\begin{abstract}
	 In this article, we study the diameter two properties (D2Ps), the diametral diameter two properties (diametral D2Ps), and the Daugavet property in Orlicz-Lorentz spaces equipped with the Luxemburg norm. First, we characterize the Radon-Nikod\'ym property of Orlicz-Lorentz spaces in full generality by considering all finite real-valued Orlicz functions. To show this, the fundamental functions of their K\"othe dual spaces defined by extended real-valued Orlicz functions are computed. We also show that if an Orlicz function does not satisfy the appropriate $\Delta_2$-condition, the Orlicz-Lorentz space and its order-continuous subspace have the strong diameter two property. Consequently, given that an Orlicz function is an N-function at infinity, the same condition characterizes the diameter two properties of Orlicz-Lorentz spaces as well as the octahedralities of their K\"othe dual spaces. The Orlicz-Lorentz function spaces with the Daugavet property and the diametral D2Ps are isometrically isomorphic to $L_1$ when the weight function is regular. In the process, we observe that every locally uniformly nonsquare point is not a $\Delta$-point. This fact provides another class of real Banach spaces without $\Delta$-points. As another application, it is shown that for Orlicz-Lorentz spaces equipped with the Luxemburg norm defined by an N-function at infinity, their K\"othe dual spaces do not have the local diameter two property, and so as other (diametral) diameter two properties and the Daugavet property.
\end{abstract}

\section{Introduction}

In this article, we study the Daugavet property and diameter two properties of Orlicz-Lorentz spaces equipped with the Luxemburg norm. Orlicz-Lorentz spaces \cite{K}, are natural generalization of both Orlicz and Lorentz spaces. Various properties on Orlicz-Lorentz spaces such as rotundity \cite{K, K2}, nonsquareness \cite{FHK, FK2}, the Banach-Saks property \cite{KL}, and the Kadec-Klee property \cite{Kol} have been examined by many researchers.

Let us mention that the Orlicz-Lorentz spaces satisfying the aforementioned properties are associated with the Orlicz functions having the growth condition called $\Delta_2$. In addition, it is well-known that the $\Delta_2$-condition characterizes separable Orlicz-Lorentz spaces \cite{K} as well as the Radon-Nikod\'ym property if we restrict ourselves to a smaller class of Orlicz functions called N-functions \cite{KT}.  

From this perspective, nonseparable Orlicz-Lorentz spaces do not seem to possess many geometrical properties. However, it is recently shown that if an N-function does not satisfy the $\Delta_2$-condition then the Orlicz-Lorentz space has the diameter two property \cite{KT}. Let $X$ be a {\it real} Banach space. For $x^* \in S_{X^*}$ and $\epsilon > 0$, a {\it slice} $S(x^*, \epsilon)$ of the unit ball $B_X$ is defined by $S(x^*, \epsilon) = \{x \in B_X : x^*x > 1 - \epsilon\}$.

\begin{Definition}
	\begin{enumerate}[\rm(i)]
		\item A Banach space $X$ has the local diameter two property (LD2P) if every slice of the unit ball $B_X$ has diameter two.
		\item A Banach space $X$ has the diameter two property (D2P) if every nonempty weakly open subset of the unit ball $B_X$ has diameter two.
		\item A Banach space $X$ has the strong diameter two property (SD2P) if every convex combination of slices of the unit ball $B_X$ has diameter two.
	\end{enumerate}
\end{Definition}

It is well-known that the SD2P implies the D2P because of Bourgain's lemma which states that every relatively weakly open subset of $B_X$ contains a convex combination of slices \cite[Lemma II.1]{GGMS}. Moreover, every slice is a relatively weakly open subset of the $B_X$, so the D2P implies the LD2P. Diameter two properties are also known to be connected to other geometrical properties such as almost squareness \cite{ALL}, octahedrality \cite{HLP}, and the Daugavet property of Banach spaces. The first two properties that we just mentioned play important roles to examine the D2Ps on tensor products of Banach spaces \cite{LLZ, Ru} and on free Banach lattices \cite{DMRR}.

A Banach space $X$ has the {\it Daugavet property} if every rank-one operator $T:X \rightarrow X$ satisfies 
\[
\|I + T\| = 1 + \|T\|.
\] 
This property is not only described in terms of a specific behavior of rank-one operators but also by a specific behavior of slices of the unit ball $B_X$.

\begin{Lemma}\cite[Lemma 2.2]{KSSW}
	\label{lem:Daug}
	The following statements are equivalent.
	\begin{enumerate}[{\rm(i)}]
		\item A Banach space $(X,\|\cdot\|)$ has the Daugavet property,
		\item\label{Daugii} For every slice $S = S(x^*,\epsilon)$ where $x^*\in S_{X^*}$, every $x \in S_X$ and every $\epsilon>0$, there exists $y\in S_{X}\cap S$ such that $\|x+y\|>2-\epsilon$,
		\item\label{Daugiii} For every weak$^{*}$-slice $S^* = S(x,\epsilon)$ where $x\in S_{X}$, every $x^* \in S_{X^*}$ and every $\epsilon>0$, there exists $y^*\in S_{X^*}\cap S^*$ such that $\|x^*+y^*\|>2-\epsilon$.
	\end{enumerate}		
\end{Lemma}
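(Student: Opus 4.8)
\medskip
\noindent\emph{Sketch of the intended proof.}
This is a classical characterization, so the plan is to reproduce the standard argument. The pivot is the elementary identity that every nonzero rank-one operator $T\colon X\to X$ can be written as $T=x^*\otimes y$, meaning $Tz=x^*(z)\,y$, with $\|T\|=\|x^*\|\,\|y\|$; rescaling $x^*$ and $y$ and dividing by $\|T\|$, the Daugavet equation $\|I+T\|=1+\|T\|$ reduces to $\|I+x^*\otimes y\|=2$ with $x^*\in S_{X^*}$ and $y\in S_X$. Hence (i) is equivalent to the assertion that $\|I+x^*\otimes y\|=2$ for all $x^*\in S_{X^*}$ and all $y\in S_X$. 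I would then establish the cycles $(i)\Rightarrow(ii)\Rightarrow(i)$ and $(i)\Rightarrow(iii)\Rightarrow(i)$.

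For $(i)\Rightarrow(ii)$, fix $S=S(x^*,\epsilon)$, a point $x\in S_X$ and $\epsilon>0$. Applying $\|I+x^*\otimes x\|=2$, for each $\delta\in(0,\epsilon/2)$ choose $z\in B_X$ with $\|z+x^*(z)\,x\|>2-\delta$. The triangle inequality forces $|x^*(z)|>1-\delta$; put $y:=(\sign x^*(z))\,z$, so that $x^*(y)=|x^*(z)|>1-\delta$ gives $y\in S_X\cap S(x^*,\delta)\subseteq S$, while, after multiplying inside the norm by $\sign x^*(z)$,
\[
\|x+y\|=\bigl\|z+(\sign x^*(z))\,x\bigr\|\ \ge\ \|z+x^*(z)\,x\|-\bigl(1-|x^*(z)|\bigr)\ >\ 2-2\delta .
\]
For $(ii)\Rightarrow(i)$, normalize a rank-one $T$ as $x^*\otimes y$ with $\|x^*\|=\|y\|=1$; given $\eta>0$, apply (ii) to the slice $S(x^*,\eta)$ and the point $y$ to obtain $z\in S_X\cap S(x^*,\eta)$ with $\|y+z\|>2-\eta$; then $x^*(z)>1-\eta$ and
\[
\|(I+T)z\|=\|z+x^*(z)\,y\|\ \ge\ \|z+y\|-\bigl(1-x^*(z)\bigr)\ >\ 2-2\eta ,
\]
so $\|I+T\|=2=1+\|T\|$.

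The dual pair of implications follows the same pattern in $X^*$, using $\|I_X+T\|=\|I_{X^*}+T^*\|$ together with the fact that the adjoint of $x^*\otimes y$ (with $x^*\in X^*$, $y\in X$) is the rank-one operator $y\otimes x^*$ on $X^*$, where $y$ is regarded as an element of $X\subseteq X^{**}$. Thus (i) is equivalent to $\|I_{X^*}+y\otimes x^*\|=2$ for all $y\in S_X$ and all $x^*\in S_{X^*}$, and the two arguments above transcribe verbatim with $X^*$ in place of $X$ and with the weak$^*$-slices $S(x,\epsilon)=\{\psi\in B_{X^*}:\psi(x)>1-\epsilon\}$ (for $x\in S_X$) in place of ordinary slices; this yields $(i)\Leftrightarrow(iii)$.

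I do not anticipate a genuine obstacle here. The two points that need care are the sign bookkeeping in $(i)\Rightarrow(ii)$ and $(i)\Rightarrow(iii)$ — one must verify that the vector extracted from a near-maximizer of $\|I+T\|$ simultaneously lies in the prescribed (weak$^*$-)slice and almost bisects the relevant segment — and, in the passage to (iii), the observation that although not every rank-one operator on $X^*$ is an adjoint, those of the form $y\otimes x^*$ with $y\in X$ are, and these are precisely the ones whose Daugavet equation is controlled by weak$^*$-slices of $B_{X^*}$, so the equivalence $(i)\Leftrightarrow(iii)$ requires no additional hypothesis on $X$.
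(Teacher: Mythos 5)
The paper does not prove this lemma: it is quoted verbatim from \cite[Lemma 2.2]{KSSW}, so there is no in-paper argument to compare against. Your sketch reproduces the standard proof from that source and is essentially correct: the reduction of the Daugavet equation to $\|I+x^*\otimes y\|=2$ for unit vectors (note that ``rescaling and dividing by $\|T\|$'' silently uses the convexity fact that the Daugavet equation for a norm-one rank-one $T$ propagates to every positive multiple $cT$, via $I+T=\frac1c(I+cT)+(1-\frac1c)I$ and its companion for $c<1$; worth one line), the slice argument for (i)$\Leftrightarrow$(ii), and the passage to (iii) through $\|I_X+T\|=\|I_{X^*}+T^*\|$ together with the observation that $(x^*\otimes y)^*=y\otimes x^*$ is exactly the class of rank-one operators on $X^*$ controlled by weak$^*$-slices. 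One small repair is needed in (i)$\Rightarrow$(ii) and its dual: the near-maximizer $z\in B_X$ with $\|z+x^*(z)x\|>2-\delta$ satisfies only $\|z\|>1-\delta$, not $\|z\|=1$, so $y=(\sign x^*(z))\,z$ need not lie in $S_X$ as statement (ii) demands; replacing it by $(\sign x^*(z))\,z/\|z\|$ costs at most an additional $\delta$ in each of the two estimates and does land in $S_X\cap S$. With that adjustment the argument is complete.
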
 

\noindent It is well-known that any Banach space with the Daugavet property has the SD2P \cite{ALN}.

Nonatomic $L_1$ and $L_{\infty}$ spaces are classical examples with the Daugavet property. More generally, any rearrangement invariant Banach function lattice over a finite, nonatomic measure space with the Daugavet property is isometrically isomorphic to $L_1$ or $L_{\infty}$ \cite{AKM, KMMW}. For the infinite measure space case, a rearrangement invariant Banach function lattice with the Daugavet property is isometrically isomorphic to $L_1$ under certain conditions \cite{AKM}, but the full description has been still unknown up to our knowledge. However, there are several results for specific Banach function lattices over an infinite measure space. For instance, Musielak-Orlicz spaces with the Daugavet property are fully characterized in \cite{KK}. Also, for finite Orlicz functions, an Orlicz space equipped with the Luxemburg norm has the Daugavet property if and only if the space is isometrically isomorphic to $L_1$ \cite{KLT}. 

Recently, a variant of the D2Ps called the diametral diameter two properties (diametral D2Ps) have gained attention from many researchers \cite{AHNTT,  AHLP, ALNT, BLZ, LP}. They are known to be weaker than the Daugavet property but stronger than the D2Ps.
\begin{Definition}
	\begin{enumerate}[\rm(i)]
		\item A Banach space $X$ has the diametral local diameter two property (DLD2P) if for every slice $S$ of the unit ball, every $x \in S \cap S_X$, and every $\epsilon > 0$ there exists $y \in B_X$ such that $\|x - y\| \geq 2 - \epsilon$.
		\item A Banach space $X$ has the diametral diameter two property (DD2P) if for every nonempty weakly open subset $W$ of the unit ball, every $x \in W \cap S_X$, and every $\epsilon > 0$, there exists $y \in B_X$ such that $\|x- y\| \geq 2 - \epsilon$. 
	\end{enumerate}
\end{Definition}
\noindent We mention that the DLD2P also describes a certain behavior of rank-one projections acting on a Banach space \cite[Theorem 1.4]{IK}. There was the diametral version of the SD2P called the DSD2P, but it is now known to be equivalent to the Daugavet property \cite{Kad}. We have the following relationship between the diametral D2Ps and the Daugavet property.
\[
\text{Daugavet Property} \implies \text{DD2P} \implies \text{DLD2P}.
\]

 But in this article, we will use the equivalent definitions of the DLD2P and the Daugavet property given by {\it $\Delta$-points} and {\it Daugavet points} \cite{AHLP} instead. For $x \in S_X$ and $\epsilon >0$, let $\Delta_{\epsilon}(x) = \{y \in B_X : \|x - y\| \geq 2 - \epsilon\}$.
\begin{Definition}
	\begin{enumerate}[\rm(i)]
		\item A point $x \in S_X$ is a $\Delta$-point if $x \in \overline{conv}\Delta_{\epsilon}(x)$ for every $\epsilon > 0$. 
		\item A point $x \in S_X$ is a Daugavet point if $B_X =  \overline{conv}\Delta_{\epsilon}(x)$ for every $\epsilon > 0$.
	\end{enumerate}
\end{Definition}
\noindent A Banach space $X$ has the DLD2P if every point on the unit sphere $S_X$ is a $\Delta$-point \cite{AHLP, IK}. Recently, these points have been characterized for different Banach spaces such as Lipschitz-free spaces \cite{JR} and certain vector-valued function spaces associated with the uniform algebras \cite{LT}.  

The article consists of four parts. In section 2 we provide necessary information about Banach function spaces and Orlicz-Lorentz spaces. In section 3, we study the SD2P of Orlicz-Lorentz spaces in view of the theory of M-ideals (Theorem \ref{th: MidealOL} and Theorem \ref{th:midealOLseq}). We characterize the Radon-Nikod\'ym property for a broader class of Orlicz-Lorentz space (Theorem \ref{th:RNPsuff}), which improves the result in \cite{KT}. As a consequence, we identify a class of Orlicz-Lorentz spaces where all D2Ps are equivalent to each other (Theorem \ref{th:equivD2P}). This particular result additionally shows the equivalence between the variants of octahedralities in the K\"othe dual of Orlicz-Lorentz spaces. In section 4, we examine the relationship between locally uniformly nonsquare points and $\Delta$-points. From the fact that locally uniformly nonsquare points are not $\Delta$-points (Theorem \ref{prop:nod}), we provide an alternative approach to the fact that Banach spaces with the DLD2P are locally octahedral. On the other extreme, this fact also shows that every locally uniformly nonsquare Banach space cannot have $\Delta$-points (Corollary \ref{cor:nodelta}). Such a class of Banach spaces includes locally uniformly rotund Banach spaces and uniformly nonsquare Banach spaces defined by James \cite{J}. With this observation, we show that the Orlicz-Lorentz function space with the Daugavet property and the diametral D2Ps is isometrically isomorphic to $L_1$ under a certain assumption on the weight function (Theorem \ref{th:OLDauUN}). We also show that the K\"othe dual $\mathcal{M}_{\varphi,w}^0$ of Orlicz-Lorentz space $\Lambda_{\varphi_*,w}$ never satisfies the LD2P where $\varphi_*$ is the complementary function of an N-function at infinity (Corollary \ref{th:KothenoLD2P}). 

\section{Preliminaries}

\subsection{Banach function spaces and sequence spaces} The space of all $\mu$-measurable extended real valued functions over a measure space ($\Omega, \Sigma,\mu)$ is denoted by $L_0(\Omega)$. In this article, we always assume either $\Omega = I = [0, \gamma), \gamma \leq \infty$, $\Sigma$ the $\sigma$-algebra of Lebesgue measurable subsets in $I$  with the Lebesgue measure $\mu = m$, or $\Omega = \mathbb{N}$ with the counting measure $m$. Unless $\gamma$ is stated specifically, any statements in this article hold for both infinite and finite interval $I$. Accordingly, we denote $L_0 = L_0(I)$ for functions and $\ell_0 = L_0(\mathbb{N})$ for sequences. The notation ``a.e.'' indicates almost everywhere convergence with respect to the Lebesgue measure $m$. A Banach space $(X, \|\cdot\|_X) \subset L_0(\Omega)$ is said to be a {\it Banach lattice} if $0 \leq |f| \leq |g|$ $\mu$-a.e. for $f \in L_0$ and $g \in X$ implies $f \in X$ and $\|f\|_X \leq \|g\|_X$. A Banach lattice $X$ is called a {\it Banach function space} if the underlying measure space $\Omega = I$ and a {\it Banach sequence space} if we consider $\Omega = \mathbb{N}$. We say a Banach lattice has the {\it Fatou property} if for every sequence $(f_n) \subset X$ such that $0 \leq |f_n| \uparrow |f|$ $\mu$-a.e. and $\sup\|f_n\|_X < \infty$ we have $f \in X$ and $\|f_n\|_X \rightarrow \|f\|_X$.  

For a Banach function lattice $X$, a function $f \in X$ is said to be {\it order-continuous} if $\|f_n\|_X \downarrow 0$ for every sequence $\{f_n\}_{n=1}^{\infty}$ of measurable functions such that $f_n \leq f$ for every $n \in  \mathbb{N}$ and $f_n\downarrow 0$ $\mu$-a.e. \cite[Proposition 1.3.5]{BS}. The set of all order-continuous elements in $X$ is denoted by $X_a$. The set $X_a$ is a closed subspace of $X$ and if $|g| \leq |f|$ $\mu$-a.e. and $f \in X_a$ then $g \in X_a$ \cite[Theorem 1.3.8]{BS}. The closure of the set of all simple functions supported on sets of finite measure is denoted by $X_b$. In general, we have $X_a \subset X_b$ \cite[Theorem 1.3.11]{BS}.

The {\it K\"othe dual} $X' \subset L_0(\Omega)$ of a Banach function lattice $X$ is a collection of  functions in $L_0(\Omega)$ such that for each $g \in X'$ the associated norm of $g \in X'$ defined by
\[
\|g\|_{X'} = \sup\left\{\left|\int_\Omega gf\right|: \|f\|_X \leq 1\right\}
\]
is finite. 

For $\lambda > 0$, the {\it distribution function} $d_f(\lambda)$ of $f \in L_0(\Omega)$ is defined by $d_f(\lambda) = \mu\{t \in \Omega: |f(t)| > \lambda\}$.   We say that $f,g\in L_0(\Omega)$ are {\it equimeasurable} whenever $d_f = d_g$ on $(0,\infty)$.  For $f \in L_0$, the function $f^*(t) = \inf\{\lambda > 0: d_f(\lambda) \leq t\}$, $t\ge 0$, is the generalized inverse of $d_f(\lambda)$, and this is called the {\it decreasing rearrangement} of $f \in L_0$. For $x \in \ell_0$, the decreasing rearrangement $x^*$ is defined by $x^*(i) = \inf\{\lambda > 0: d_x(\lambda) < i\}$, $i\in\mathbb{N}$.
 The functions $f$ and $f^*$, respectively $x$ and $x^*$,  are equimeasurable, that is,  $d_f(\lambda) = d_{f^*}(\lambda)$ for all $\lambda > 0$,  and $d_{x^*}(i) = d_x(i)$  for all $i\in\mathbb{N}$. We denote the {\it Hardy-Littlewood-P\'olya submajorization} by $f \prec g$ if $\int_{0}^t f^*  \leq \int_{0}^t g^*$ for all $t > 0$. 

For a Banach function lattice $X$, if all equimeasurable functions $f,g \in X$ satisfy $\|f\|_X = \|g\|_X$, then we say $X$ is {\it rearrangement invariant} (r.i.). Orlicz-Lorentz function and sequence spaces and their K\"othe dual spaces are examples of r.i. Banach function and sequence spaces. The fundamental function of $X$ is defined by $\phi_X(t)= \|\chi_{E_t}\|_X$ where $E_t$ is a set of measure $t \in [0, \infty]$. For more details on the theory of Banach function lattices, we refer to \cite{BS, KPS, LT2}.  

\subsection{Orlicz-Lorentz spaces and its K\"othe duals} An {\it Orlicz function} $\varphi: \mathbb{R}^+ \rightarrow \mathbb{R}^{+}$ is a convex function that is not identical to the zero function with $\varphi(0) = 0$. 
The {\it complementary function} $\varphi_*$ of an Orlicz function $\varphi$ is defined by $\varphi_*(v) = \sup\{uv - \varphi(u) : u > 0\}$, $v\ge 0$. It is well-known that $\varphi_*:\mathbb{R}^+ \to [0,\infty]$ is convex and $\varphi_{**} = \varphi$.

In this article, we consider the constants $a_{\varphi}$ and $d_\varphi$ that are given by
\begin{align*}
	a_\varphi &= \sup\{u > 0 : \varphi(u) = 0\}\\
	d_\varphi &= \sup\{u > 0 : \varphi(s)  = ks \,\, \text{for some} \,\, k \geq 0\,\,\text{and}\,\,\text{for every}\,\,s \in [0, u)\}.	
\end{align*}
We call an Orlicz function $\varphi$ to be {\it degenerate} if $a_{\varphi} > 0$. It is also easy to see that $a_{\varphi} = d_{\varphi}$ in this case. If $a_\varphi = 0$ then $\varphi$ is called {\it nondegenerate}.

Let the weight function $w: I \rightarrow [0, \gamma)$ be a decreasing, positive, locally integrable function and denote $W(t) = \int_0^t w$, $t\in I$. Therefore $W(t) <\infty$ for all $t\in [0,\gamma)$. We always assume here that if $\gamma = \infty$ then $\int_0^{\infty}w = \infty$. A convex {\it modular} $\rho_{\varphi,w}(\cdot): L_0 \rightarrow [0, \infty]$ for $f \in L_0$ is defined by
\[
\rho_{\varphi,w}(f) = \int_I \varphi(f^*)w,
\]  
and
a convex modular $\alpha_{\varphi,w}(\cdot): \ell_0 \rightarrow [0, \infty]$ for $x \in \ell_0$ by
\[
\alpha_{\varphi,w}(x) = \sum_{i=1}^\infty \varphi(x^*(i))w(i).
\] 
The modular $\rho_{\varphi,w}$ is orthogonally subadditive, that is, for $f \wedge g = 0$, $\rho_{\varphi, w}(f + g) \leq \rho_{\varphi,w}(f) + \rho_{\varphi,w}(g)$. If $0 \leq|f| \leq |g|$ a.e., then $\rho_{\varphi,w}(f) \leq \rho_{\varphi,w}(g)$. Similarly, these facts are also true for the modular $\alpha_{\varphi,w}$.

An Orlicz function $\varphi$ satisfies the {\it $\Delta_2^0$-condition} if there exist $K > 2$ and $u_0 > 0$ such that  $\varphi(2u) \leq K \varphi(u)$ for all $u \leq u_0$. Similarly, $\varphi$ satisfies the {\it $\Delta_2^{\infty}$-condition} if there exist $K > 2$ and $u_0 \ge 0$ such that  $\varphi(2u) \leq K \varphi(u)$ for all $u \geq u_0$. If $\varphi$ satisfies both the $\Delta_2^0$- and $\Delta_2^{\infty}$- conditions, we say that $\varphi$ satisfies the {\it $\Delta_2$-condition}. In this article, the ``appropriate" $\Delta_2$-condition means the $\Delta_2^\infty$-condition for the spaces considered on $I$ with $\gamma < \infty$, the $\Delta_2$-condition on $I$ with  $\gamma = \infty$, and the $\Delta_2^0$-condition  for sequence spaces.

The {\it Orlicz-Lorentz function and sequence space} $\Lambda_{\varphi,w}$ and $\lambda_{\varphi,w}$ are defined by
\begin{align*}
	\Lambda_{\varphi,w} &= \{f \in L_0 : \rho_{\varphi, w}(kf) < \infty\,\,\, \text{for some} \,\,\, k > 0\},\\
	\lambda_{\varphi,w} &= \{x \in \ell_0 : \alpha_{\varphi, w}(kx) < \infty\,\,\, \text{for some} \,\,\, k > 0\}.
\end{align*}
 We mention that $(\Lambda_{\varphi,w})_a = (\Lambda_{\varphi,w})_b$ by the fact that $\lim_{t \rightarrow 0+}\phi_{\Lambda_{\varphi, w}}(t) =\lim_{t \rightarrow 0+}\frac{1}{\varphi^{-1}(1/W(t))} = 0$ and by \cite[Theorem 2.5.5]{BS}. For the sequence space, the relationship $(\lambda_{\varphi, w})_a = (\lambda_{\varphi, w})_b$ always holds in view of \cite[Theorem 2.5.4]{BS}. Notice that we have the Orlicz space $L_{\varphi}$ when $w \equiv 1$ and the Lorentz space $\Lambda_{p, w}$ for $1 \leq p < \infty$ when $\varphi(u) = u^p$, $u\ge 0$.

In this article we only consider the Orlicz-Lorentz function and sequence spaces equipped with the {\it Luxemburg norm} $\|\cdot\|_{\varphi,w}$ that are defined by
\[
\|f\|_{\varphi,w} = \inf\left\{\epsilon > 0 : \rho_{\varphi,w}\left(\frac{f}{\epsilon}\right) \leq 1\right\}\,\,\, \text{and} \,\,\, 
\|x\|_{\varphi,w} = \inf\left\{\epsilon > 0 : \alpha_{\varphi,w}\left(\frac{x}{\epsilon}\right) \leq 1\right\},
\]
for functions and sequences, respectively. It is well known that  Orlicz-Lorentz spaces have the Fatou property.

To describe the K\"othe duals of Orlicz-Lorentz function spaces, two modulars $P_{\varphi,w}$ and $Q_{\varphi,w}$ are introduced in \cite{KR2} as follows.
\begin{align*}
	P_{\varphi,w}(f) &= \inf\left\{\int_I \varphi\left(\frac{f^*}{v}\right)v: v \prec w, v > 0, v \downarrow \right\},\\
	Q_{\varphi,w}(f) &= \int_I \varphi\left(\frac{(f^*)^0}{w}\right)w.
\end{align*}
Here $f^0$ denotes the level function of $f$ in the sense of Halperin \cite{KLR, KR2}. Then the space $\mathcal{M}_{\varphi,w}$ is defined by 
\[
\mathcal{M}_{\varphi,w} = \{f \in L_0 : P_{\varphi, w}(kf) < \infty\,\,\, \text{for some} \,\,\, k > 0\}.
\]

 For the sequence case, the modular $p_{\varphi,w}$ \cite{KR1} is similarly defined by 
	\[
	p_{\varphi,w}(x) = \inf\left\{\sum_{i=1}^{\infty} \varphi\left(\frac{x^*(i)}{v(i)}\right)v(i): v \prec w, v \downarrow\right\},
	\]
and the space $\mathfrak{m}_{\varphi,w}$ accordingly by
	\[
	\mathfrak{m}_{\varphi,w} = \{x \in \ell_0 : p_{\varphi, w}(kx) < \infty\,\,\, \text{for some} \,\,\, k > 0\}.
	\]

The Luxemburg norm $\|\cdot\|_{\mathcal{M}_{\varphi,w}}$ is defined similarly to the Orlicz-Lorentz spaces by replacing the modular $\rho_{\varphi, w}$ with $P_{\varphi,w}$. The Orlicz norm (or Amemiya norm) $\|\cdot\|_{\mathcal{M}_{\varphi,w}}^0$ and $\|\cdot\|_{\mathfrak{m}_{\varphi,w}}^0$ for the function and sequence spaces is defined by
\begin{align*}
	\|f\|_{\mathcal{M}_{\varphi,w}}^0 &= \inf_{k > 0}\frac{1}{k}(1 + P_{\varphi,w}(kf))\\
	\|x\|_{\mathfrak{m}_{\varphi,w}}^0 &= \inf_{k > 0}\frac{1}{k}(1 + p_{\varphi,w}(kx))
\end{align*}

From now on we denote the space $\mathcal{M}_{\varphi,w}$ equipped with the Luxemburg norm and the Orlicz norm by $\mathcal{M}_{\varphi,w}$ and $\mathcal{M}_{\varphi,w}^0$ respectively. We adopt the similar notations for the sequence spaces $\mathfrak{m}_{\varphi,w}$. It is well-known that the Luxemburg and Orlicz norms on the space $\mathcal{M}_{\varphi,w}$ are equivalent \cite{KLR}.   

It has been also shown that the modulars $P_{\varphi, w}$ and $Q_{\varphi,w}$ induce the same Luxemburg and Orlicz norms for the space $\mathcal{M}_{\varphi,w}$ \cite[Theorem 8.9]{KR2}. However, there is a slight difference on their modular structures, namely, we have $P_{\varphi,w}(f) \leq Q_{\varphi,w}(f)$ for every $f \in L_0(I)$. These modulars give the same value when $\varphi$ is an N-function \cite{KLR}. 

As a matter of fact, the space $\mathcal{M}_{\varphi,w}^0$ is the K\"othe dual space of an Orlicz-Lorentz space $\Lambda_{\varphi_*,w}$.

\begin{Theorem}\label{th:KLR}\cite{FLM, KLR, KR2}
	Let $\varphi$ be an Orlicz function and let $w$ be a weight function. Then the K\"othe dual space of the Orlicz-Lorentz function (resp. sequence) space  $\Lambda_{\varphi, w}$ (resp. $\lambda_{\varphi, w}$) is isometric to $\mathcal{M}_{\varphi_*,w}^0$. (resp. $\mathfrak{m}_{\varphi_*,w}^0$).
\end{Theorem}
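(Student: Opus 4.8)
The plan is to prove the isometry by computing the K\"othe dual norm of $\Lambda_{\varphi,w}$ directly and matching it with the Amemiya--Orlicz norm $\|\cdot\|^0_{\mathcal{M}_{\varphi_*,w}}$ associated with the modular $P_{\varphi_*,w}$, the sequence statement being the verbatim discrete analogue. The first step is the standard reduction: since $\Lambda_{\varphi,w}$ is rearrangement invariant, $\|f\|_{\varphi,w}$ depends only on $f^*$, and since for the Luxemburg norm one has $\|h\|_{\varphi,w}\le 1$ if and only if $\rho_{\varphi,w}(h)\le 1$ (using the Fatou property of $\rho_{\varphi,w}$), the Hardy--Littlewood inequality $\left|\int_\Omega gf\right|\le\int_0^\gamma g^*f^*$ and its (approximate) attainment by equimeasurable rearrangements give
\[
\|g\|_{\Lambda_{\varphi,w}'}=\sup\left\{\int_0^\gamma g^*h:\ h\ge 0,\ h\downarrow,\ \int_0^\gamma\varphi(h)\,w\le 1\right\}.
\]

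For the upper bound $\|g\|_{\Lambda_{\varphi,w}'}\le\|g\|^0_{\mathcal{M}_{\varphi_*,w}}$ I would fix any weight $v$ admissible in the definition of $P_{\varphi_*,w}(g)$, that is $0<v\downarrow$ with $v\prec w$. Young's inequality for the conjugate pair $(\varphi,\varphi_*)$ gives $\frac{g^*}{v}\,h\le\varphi(h)+\varphi_*\!\left(\frac{g^*}{v}\right)$ pointwise; multiplying by $v$ and integrating,
\[
\int_0^\gamma g^*h\le\int_0^\gamma v\,\varphi(h)+\int_0^\gamma v\,\varphi_*\!\left(\frac{g^*}{v}\right).
\]
Because $h\downarrow$ and $\varphi$ is increasing, $\varphi(h)$ is decreasing, so Hardy's lemma together with $v\prec w$ yields $\int_0^\gamma v\,\varphi(h)\le\int_0^\gamma w\,\varphi(h)=\rho_{\varphi,w}(h)\le 1$. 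Taking the infimum over $v$ gives $\int_0^\gamma g^*h\le 1+P_{\varphi_*,w}(g)$; applying this to $kg$ and optimizing over $k>0$ yields $\|g\|_{\Lambda_{\varphi,w}'}\le\inf_{k>0}\tfrac1k\bigl(1+P_{\varphi_*,w}(kg)\bigr)=\|g\|^0_{\mathcal{M}_{\varphi_*,w}}$.

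The reverse inequality is the heart of the matter and is where I expect the real work. For a fixed $g$ one must produce a decreasing $h\ge 0$ with $\rho_{\varphi,w}(h)\le 1$ for which $\int_0^\gamma g^*h$ is arbitrarily close to $\|g\|^0_{\mathcal{M}_{\varphi_*,w}}$. The device is Halperin's \emph{level function}: the infimum defining $P_{\varphi_*,w}(g)$ is, up to the norm identification of \cite{KLR,KR2}, realized by the decreasing weight $v_0$ for which $g^*/v_0$ is constant on every interval where $v_0\neq w$ while $\int_0^t v_0=\int_0^t w$ there, which is exactly why $P_{\varphi_*,w}$ and the level-function modular $Q_{\varphi_*,w}(g)=\int_0^\gamma\varphi_*\!\bigl((g^*)^0/w\bigr)w$ induce the same norm \cite{KR2}. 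Choosing $h$ equal to the right derivative $\varphi_*'(g^*/v_0)$ turns the Young inequality above into an equality; the block-constancy of $g^*/v_0$ makes $h$ decreasing and forces $\int_0^\gamma v_0\,\varphi(h)=\int_0^\gamma w\,\varphi(h)=\rho_{\varphi,w}(h)$, and calibrating the scale of $g$ to the $k$ attaining the Amemiya infimum drives $\rho_{\varphi,w}(h)$ to $1$. Undoing this scale, $\int_0^\gamma g^*h$ then tends to $\|g\|^0_{\mathcal{M}_{\varphi_*,w}}$, which together with the upper bound proves the function-space isometry; replacing integrals by sums and the level function by its discrete version \cite{KR1} gives the $\lambda_{\varphi,w}$ statement.

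The main obstacle is thus the level-function step of the previous paragraph: checking that the extremal $v_0$ attains (or nearly attains) the infimum in $P_{\varphi_*,w}$, that the associated test function is genuinely decreasing, and that $v_0$ may be replaced by $w$ on the level blocks, all of which is the substance of the Halperin-type theory of \cite{KLR,KR2}. Two secondary technical points require attention: first, $\varphi_*$ may be extended real-valued (for instance when $\varphi$ is degenerate or eventually affine), so Young's inequality, the one-sided differentiability of $\varphi_*$, and finiteness of the modulars must be read off on the set $\{\varphi_*<\infty\}$; second, the attainment in the Hardy--Littlewood reduction must be replaced by approximation when $\gamma=\infty$ or on $\mathbb{N}$, which is harmless since both sides are suprema.
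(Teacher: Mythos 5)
The paper does not prove this statement at all: it is imported verbatim from \cite{FLM, KLR, KR2}, so there is no in-paper argument to compare yours against. Your sketch follows the route of those references. The reduction to decreasing rearrangements and the upper bound $\|g\|_{\Lambda_{\varphi,w}'}\le\|g\|^0_{\mathcal{M}_{\varphi_*,w}}$ are complete and correct as written: Young's inequality, Hardy's lemma applied to the decreasing function $\varphi(h)$ against $v\prec w$, and the optimization over $k$ are exactly the right steps, and this half genuinely needs nothing beyond what you wrote.

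The lower bound, as you yourself flag, is where the substance lies, and your paragraph on it is an outline rather than a proof. Two specific points would have to be supplied before it closes. First, you cannot simply take $v_0$ to be the level-function weight and assert that it (nearly) attains the infimum defining $P_{\varphi_*,w}(kg)$: the paper itself records that $P_{\varphi,w}\le Q_{\varphi,w}$ with equality of the induced \emph{norms} but not of the modulars in general (equality of modulars is only guaranteed for N-functions). Since your argument manipulates modular values, not norms, you need the level-function machinery of \cite{KLR,KR2} at the level of modulars, or you need to rerun the computation with a nearly optimal $v$ from the $P$-infimum and control the error; either way this is the content of the cited theory, not a one-line substitution. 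Second, the calibration step ``the Amemiya-optimal $k$ drives $\rho_{\varphi,w}(h)$ to $1$'' is a nontrivial claim: the infimum over $k$ need not be attained (it can run off to $0$ or $\infty$), $\varphi_*$ may be extended real-valued so that $\varphi_*'$ and the equality case of Young's inequality must be handled on $\{\varphi_*<\infty\}$ with one-sided derivatives, and when $\rho_{\varphi,w}(h)>1$ one must renormalize $h$ and show the lost factor tends to $1$. These are standard Orlicz-type manipulations but they are exactly the part of the proof that cannot be waved at. In short: your proposal is a faithful and correctly oriented outline of the known proof, rigorous for one inequality, and honest about deferring the other to the Halperin level-function theory that the paper itself cites rather than reproves.
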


Now, we present the formula for the fundamental function of $\mathcal{M}_{\varphi,w}$ with respect to any extended real-valued Orlicz functions where their corresponding values can be $0$ or $\infty$ on $(0, \infty)$. The similar formula with respect to a finite, strictly increasing Orlicz function has been considered in \cite[Proposition 4.9]{KR}. However, for studying certain geometrical properties of an Orlicz-Lorentz space $\Lambda_{\varphi,w}$ through its K\"othe dual $\mathcal{M}_{\varphi_*,w}$, we may encounter the cases where $\varphi_*(v) = 0$ or $\varphi_*(v) = \infty$ for some $v \in (0, \infty)$. For instance, if $\varphi(u) = ku$ for some $k > 0$, then $\varphi_*(v) = 0$ on the interval $[0, k]$ and $\varphi_*(v) = \infty$ on $(k, \infty)$.
Therefore, for the space $\mathcal{M}_{\varphi,w}$ we consider $\varphi:\mathbb{R}^{+} \rightarrow [0, \infty]$ that is a convex, left-continuous function where $\varphi(0) = 0$ and $\varphi$ is not identical to the zero function or the infinity function on $\mathbb{R}^{+}$. Here as well as the constants $a_{\varphi}$ and $d_{\varphi}$, we will also consider the constant $b_{\varphi}$ given by
\[
b_{\varphi} = \sup\{u \geq 0: \varphi(u)< \infty\}.
\]	
In this case, we may have $b_{\varphi} < \infty$. The left-continuity implies that $\lim_{u \rightarrow b_{\varphi}+} \varphi(u)= \infty$.

Even though the modulars $P_{\varphi,w}$ and $Q_{\varphi,w}$ may not provide the same corresponding values in general, they are equal for the characteristic functions $\chi_{(0, t)}$ where $t\in (0, \gamma)$.

\begin{Lemma}\label{lem:charM}
	Let $\varphi:\mathbb{R}^{+} \rightarrow [0, \infty]$  be a convex, left-continuous function where $\varphi(0) = 0$, $\varphi(\infty) = \infty$, and $\varphi$ is not identically zero on $\mathbb{R}^{+}$ and let $w$ be a weight function. For every $t \in (0, \gamma)$ and $c > 0$, we have $P_{\varphi,w}(c\chi_{(0, t)}) = Q_{\varphi,w}(c\chi_{(0, t)})$.
\end{Lemma}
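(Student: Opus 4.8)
The plan is to reduce to one half of the inequality. Since $P_{\varphi,w}(f)\le Q_{\varphi,w}(f)$ for every $f\in L_0(I)$ (recalled above), it suffices to prove $P_{\varphi,w}(c\chi_{(0,t)})\ge Q_{\varphi,w}(c\chi_{(0,t)})$ together with an explicit evaluation of the right-hand side. Observe first that $c\chi_{(0,t)}$ coincides with its own decreasing rearrangement, so $f^\ast=c\chi_{(0,t)}$, and the only input we need about the level function is its value on this one function.

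First I would compute $Q_{\varphi,w}(c\chi_{(0,t)})$. Because $w$ is decreasing, $s\mapsto W(s)/s$ is nonincreasing, hence $W(s)/s\ge W(t)/t$ for $0<s\le t$. Using the defining properties of the Halperin level function (see \cite{KLR,KR2}) one checks that $g:=\frac{ct}{W(t)}\,w\,\chi_{(0,t)}$ is the level function of $c\chi_{(0,t)}$: indeed $g/w=\frac{ct}{W(t)}\chi_{(0,t)}$ is nonincreasing, $\int_0^s g=\frac{ct}{W(t)}W(s)\ge cs=\int_0^s c\chi_{(0,t)}$ for $s\le t$ with equality at $s=t$, and $\int_0^s g=ct=\int_0^s c\chi_{(0,t)}$ for $s\ge t$. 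Therefore
\[
Q_{\varphi,w}(c\chi_{(0,t)})=\int_0^t\varphi\!\left(\frac{(c\chi_{(0,t)})^0}{w}\right)w=\int_0^t\varphi\!\left(\frac{ct}{W(t)}\right)w=W(t)\,\varphi\!\left(\frac{ct}{W(t)}\right).
\]

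Next I would bound $P_{\varphi,w}(c\chi_{(0,t)})$ from below. Fix any admissible $v$ (so $v>0$, $v\downarrow$, $v\prec w$) and put $V(t)=\int_0^t v$; then $0<V(t)\le W(t)$ since $v\prec w$ and $v,w$ are decreasing. Because $\varphi(0)=0$, the integrand $\varphi(c\chi_{(0,t)}/v)\,v$ vanishes off $(0,t)$, so $\int_I\varphi(c\chi_{(0,t)}/v)\,v=\int_0^t\varphi(c/v)\,v$, and Jensen's inequality for the convex function $\varphi$ against the probability measure $v(s)\,ds/V(t)$ on $(0,t)$ gives $\int_0^t\varphi(c/v)\,v\ge V(t)\,\varphi\!\big(\tfrac{1}{V(t)}\int_0^t c\,ds\big)=V(t)\,\varphi(ct/V(t))$. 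Finally, the perspective map $r\mapsto r\,\varphi(a/r)$ is nonincreasing on $(0,\infty)$ for fixed $a>0$: if $0<r_1\le r_2$, then writing $a/r_2=\tfrac{r_1}{r_2}\cdot\tfrac{a}{r_1}+(1-\tfrac{r_1}{r_2})\cdot 0$, convexity and $\varphi(0)=0$ give $\varphi(a/r_2)\le\tfrac{r_1}{r_2}\varphi(a/r_1)$, i.e.\ $r_2\varphi(a/r_2)\le r_1\varphi(a/r_1)$. With $a=ct$ and $V(t)\le W(t)$ this yields $V(t)\varphi(ct/V(t))\ge W(t)\varphi(ct/W(t))$; taking the infimum over $v$ gives $P_{\varphi,w}(c\chi_{(0,t)})\ge W(t)\varphi(ct/W(t))=Q_{\varphi,w}(c\chi_{(0,t)})$, which finishes the proof. (If one prefers not to invoke $P\le Q$ here, the explicit weight $v=\tfrac{W(t)}{t}\chi_{(0,t)}+w\,\chi_{[t,\gamma)}$ is decreasing because $W(t)/t\ge w(t)$, satisfies $v\prec w$, and realizes the value $W(t)\varphi(ct/W(t))$, giving the reverse bound directly.)

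The one point requiring care — rather than a genuine obstacle — is the extended-real-valued, possibly degenerate nature of $\varphi$: it may vanish on $[0,a_\varphi]$, equal $+\infty$ beyond $b_\varphi$, and be merely left-continuous. None of the steps uses differentiability or strict monotonicity of $\varphi$; Jensen's inequality and the perspective estimate are valid for convex $[0,+\infty]$-valued functions, and if $\varphi(c/v)=+\infty$ on a set of positive measure then $\int_0^t\varphi(c/v)\,v=+\infty$ and the inequality is trivial, so this bookkeeping is harmless.
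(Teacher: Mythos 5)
Your proof is correct and follows essentially the same route as the paper: evaluate $Q_{\varphi,w}(c\chi_{(0,t)})=W(t)\varphi(ct/W(t))$ via the level function $\frac{ct}{W(t)}w\chi_{(0,t)}$, get $P\le Q$ from an explicit admissible weight (or the general inequality), and get $P\ge Q$ from Jensen's inequality together with the monotonicity of $r\mapsto r\varphi(a/r)$. The only differences are cosmetic: you verify the level function identity and the perspective-map monotonicity directly, where the paper cites \cite[Proposition 2.2]{KT} and asserts the monotonicity.
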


\begin{proof}
	Fix $t \in (0, \gamma)$ and $c > 0$. Consider a characteristic function $\chi_{(0, t)}$. Then $((\chi_{(0, t)})^*)^0 = \frac{t w}{W(t)} \chi_{(0, t)}$, \cite[Proposition 2.2]{KT}.
	Then notice that
	\[
	Q_{\varphi,w}(c\chi_{(0, t)}) = \int_I \varphi\left(\frac{c((\chi_{(0, t)})^*)^0}{ w}\right)w = \varphi\left(\frac{ct}{ W(t)}\right)W(t) = \int_I\varphi\left(\frac{c\chi_{(0, t)}}{(W(t)/t) \chi_{(0,t)}}\right)(W(t)/t) \chi_{(0,t)}.
	\]
	Since $v = \frac{W(t)}{t}\chi_{(0,t)} \prec w$ and $v$ is decreasing, we see that $P_{\varphi,w}(c\chi_{(0, t)})\leq Q_{\varphi,w}(c\chi_{(0, t)})$.
	
	On the other hand, for every $v \prec w$, $v > 0$, $v\downarrow$ with $V(t) =\int_0^t v$, by Jensen's inequality, we have 
	\[
	\int_I \varphi\left(\frac{c\chi_{(0, t)}(s)}{v(s)}\right)\frac{v(s)}{V(t)}ds \geq \varphi\left(\frac{\int_Ic\chi_{(0, t)}(s)ds}{V(t)}\right) = \varphi\left(\frac{ct}{V(t)}\right).
	\]
	Since $V(t) \leq W(t)$ and the function $s \mapsto \varphi(\frac{c}{s})s$ is decreasing on $(0,\infty)$ for $c > 0$, we have
	\[
	\int_I \varphi\left(\frac{c\chi_{(0, t)}(s)}{v(s)}\right)\frac{v(s)}{V(t)}ds \geq \varphi\left(\frac{ct}{W(t)}\right)W(t) = Q_{\varphi,w}(c\chi_{(0, t)}).
	\]
    This inequality holds for every appropriate $v$, so we have $P_{\varphi,w}(c\chi_{(0, t)}) \geq Q_{\varphi,w}(c\chi_{(0, t)})$. Therefore, the equality holds. 
\end{proof}

\begin{Theorem}\label{th:fundM}
	Let $\varphi:\mathbb{R}^{+} \rightarrow [0, \infty]$ be a convex, left-continuous function where $\varphi(0) = 0$, $\varphi(\infty) = \infty$, and $\varphi$ is not identically zero on $\mathbb{R}^{+}$ and let $w$ be a weight function. Then the fundamental function $\phi_{\mathcal{M}_{\varphi,w}}$ is expressed by
		\[
		\phi_{\mathcal{M}_{\varphi,w}}(t) = \frac{t/W(t)}{\varphi^{-1}(1/W(t))},
		\]
		where $\varphi^{-1}$ is the inverse function of $\varphi$ restricted to the interval $(a_{\varphi}, b_{\varphi}]$. For the case of $b_{\varphi} = \infty$, such inverse function is defined with respect to $\varphi$ restricted to the interval $(a_{\varphi}, \infty)$.  
\end{Theorem}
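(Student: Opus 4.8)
Since $\mathcal{M}_{\varphi,w}$ is rearrangement invariant, it suffices to evaluate $\|\chi_{(0,t)}\|_{\mathcal{M}_{\varphi,w}}$ for $t\in(0,\gamma)$; here $0<W(t)<\infty$, so $1/W(t)\in(0,\infty)$ and the claimed expression makes sense. The plan is to combine Lemma~\ref{lem:charM} with the explicit value of $Q_{\varphi,w}$ on a characteristic function obtained inside its proof, namely
\[
P_{\varphi,w}(c\chi_{(0,t)})=Q_{\varphi,w}(c\chi_{(0,t)})=\varphi\!\left(\frac{ct}{W(t)}\right)W(t),\qquad c>0,
\]
and then read off the Luxemburg norm. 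By the definition of $\|\cdot\|_{\mathcal{M}_{\varphi,w}}$ this gives at once
\[
\phi_{\mathcal{M}_{\varphi,w}}(t)=\inf\left\{\epsilon>0:\varphi\!\left(\frac{t}{\epsilon W(t)}\right)\le \frac{1}{W(t)}\right\}.
\]

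Next I would solve this scalar inequality. On the interval $(a_\varphi,b_\varphi)$ the convex function $\varphi$ is \emph{strictly} increasing (a drop of the right derivative onto a flat piece would contradict convexity, because $\varphi$ vanishes on $[0,a_\varphi]$ and is not identically zero), with $\varphi(u)\downarrow 0$ as $u\downarrow a_\varphi$ and $\varphi(u)\uparrow\varphi(b_\varphi)\in(0,\infty]$ as $u\uparrow b_\varphi$; moreover $\varphi$ is unbounded whenever $b_\varphi=\infty$, since $u\mapsto\varphi(u)/u$ is nondecreasing and eventually positive. Hence the generalized inverse $\varphi^{-1}(s):=\sup\{u>0:\varphi(u)\le s\}$ is finite and strictly positive for every $s\in(0,\infty)$, agrees with the ordinary inverse on the range of $\varphi|_{(a_\varphi,b_\varphi)}$, and equals $b_\varphi$ for $s\ge\varphi(b_\varphi)$. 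Because $\epsilon\mapsto \varphi(t/(\epsilon W(t)))$ is nonincreasing, the constraint above is equivalent to $\frac{t}{\epsilon W(t)}\le\varphi^{-1}\!\left(\frac{1}{W(t)}\right)$, i.e. to $\epsilon\ge \frac{t/W(t)}{\varphi^{-1}(1/W(t))}$; taking the infimum yields the stated formula.

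\textbf{Main obstacle.} The only subtle part is the bookkeeping around the degenerate and truncated cases, which I would treat separately: (i) if $a_\varphi>0$, so $\varphi$ vanishes near $0$, then $1/W(t)>0$ forces $\varphi^{-1}(1/W(t))>a_\varphi$ and the equivalence above is unaffected; (ii) if $b_\varphi<\infty$ with $\varphi(b_\varphi)<\infty$, then for those $t$ with $1/W(t)\ge\varphi(b_\varphi)$ one checks directly that the constraint reduces to $\frac{t}{\epsilon W(t)}\le b_\varphi$, so the norm equals $\frac{t}{b_\varphi W(t)}$, exactly matching the convention $\varphi^{-1}(1/W(t))=b_\varphi$; (iii) if $\varphi(b_\varphi)=\infty$ (with $b_\varphi$ finite or not), left-continuity of $\varphi$ guarantees that $\varphi(u)=1/W(t)$ has a solution in $(a_\varphi,b_\varphi)$, and the formula again follows. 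Each of these is a one-line verification using monotonicity and left-continuity of $\varphi$ and the fact that $0<W(t)<\infty$; beyond this careful case analysis I do not anticipate any real difficulty.
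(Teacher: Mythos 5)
Your proposal is correct and follows essentially the same route as the paper: both reduce to the scalar identity $P_{\varphi,w}(c\chi_{(0,t)})=\varphi\left(ct/W(t)\right)W(t)$ via Lemma~\ref{lem:charM} and then invert $\varphi$ on $(a_\varphi,b_\varphi]$ to read off the Luxemburg norm. Your use of the left-continuous generalized inverse $\varphi^{-1}(s)=\sup\{u>0:\varphi(u)\le s\}$ makes the boundary case $1/W(t)>\varphi(b_\varphi)$ (where $1/W(t)$ need not lie in the range of $\varphi|_{(a_\varphi,b_\varphi]}$) slightly more explicit than the paper's argument, but the substance is identical.
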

\begin{proof}
	
	Fix $t \in (0, \gamma)$ and consider the characteristic function $\chi_{(0, t)}$. Then by Lemma \ref{lem:charM}, we have 
		\[
	P_{\varphi,w}\left(\frac{\chi_{(0, t)}}{c}\right) = \int_I \varphi\left(\frac{((\chi_{(0, t)})^*)^0}{c w}\right)w = \varphi\left(\frac{t}{c W(t)}\right)W(t)
	\]
    for every $c > 0$.
     
	Now, assume that $b_{\varphi} < \infty$. We claim that $\frac{t}{cW(t)} \in (a_\varphi, b_{\varphi}]$ whenever $c > 0$ and $P_{\varphi,w}\left(\frac{\chi_{(0, t)}}{c}\right) \leq 1$. Indeed, if $\frac{t}{cW(t)} > b_{\varphi}$,
		\[
		\infty = \varphi\left(\frac{t}{cW(t)}\right) \leq \frac{1}{W(t)},
		\]
 which is a contradiction. This shows that $\frac{t}{cW(t)} \le b_{\varphi}$. 
	
	On the other hand, for every $c > 0$ that satisfies $P_{\varphi,w}\left(\frac{\chi_{(0, t)}}{c}\right) \leq 1$, if we assume that $\frac{t}{cW(t)} \leq a_{\varphi}$, then $\frac{t}{W(t)a_{\varphi}} \leq c$. Then we have that $d := \frac{t}{W(t)a_{\varphi}} \leq \|\chi_{(0, t)}\|_{\mathcal{M}_{\varphi,w}}$ by the definition of the Luxemburg norm. Hence we also have 
		\[
		P_{\varphi,w}\left(\frac{\chi_{(0, t)}}{d}\right) \geq 1.
		\]
	
	But then, we also see that
	\[
	P_{\varphi,w}\left(\frac{\chi_{(0, t)}}{d}\right) = \varphi\left(\frac{t}{W(t)}\cdot \frac{W(t)}{t} \cdot a_{\varphi}\right)W(t) = 0,
	\]
	which leads to a contradiction. Hence, this shows that $a_{\varphi} < \frac{t}{cW(t)}$.

	Since $\frac{t}{cW(t)} \in (a_{\varphi}, b_{\varphi}]$ for every $c > 0$ that satisfies $P_{\varphi,w}\left(\frac{\chi_{(0, t)}}{c}\right) \leq 1$, we have $\varphi_{|(a_{\varphi},b_{\varphi}]}\left(\frac{t}{cW(t)}\right) = \varphi\left(\frac{t}{cW(t)}\right)$. Denote $\varphi = \varphi_{|(a_{\varphi},b_{\varphi}]}$. Hence this gives us the desired formula for $\phi_{\mathcal{M}_{\varphi,w}}(t)$ for $t \in (0, \infty)$. 
	
	If $\frac{t}{cW(t)} = b_{\varphi} = \infty$, then we immediately see that \[
	\infty = \varphi(\infty) =  \varphi\left(\frac{t}{cW(t)}\right) \leq \frac{1}{W(t)},
	\]
	which also leads to a contradiction. Hence, $\frac{t}{cW(t)} < b_{\varphi}$. Therefore, by using the same argument, we show that the same formula holds with $\varphi^{-1}$ that is the inverse function of $\varphi$ restricted to $(a_{\varphi}, \infty)$.
\end{proof}

\section{Diameter two properties on Orlicz-Lorentz function and sequence spaces}

\subsection{Orlicz-Lorentz spaces with the Radon-Nikod\'ym property}

Let us recall that a Banach space $X$ with the Radon-Nikod\'ym property (RNP) has slices of the unit ball with arbitrarily small diameter. Sufficient conditions for a Banach function space to have the RNP are given in \cite{KLT} as follows.

\begin{Theorem}\cite{KLT}\label{th:RNKothe}
	Let $X$ be a Banach function space  over a complete $\sigma$-finite measure space $(\Omega, \Sigma, \mu)$.
	\begin{itemize}
		\item[(i)]
		If $X$ has the RNP then $X$ is order-continuous.
		\item[(ii)]
		Assume that $X$ has the Fatou property and $(X')_a = (X')_b$.  Then if $X$ is order-continuous then $X$ has the RNP.
	\end{itemize}
\end{Theorem}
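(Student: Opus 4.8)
The plan is to prove (i) by contraposition and (ii) by exhibiting $X$ as a separable dual space.

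\medskip

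\emph{Part (i).} I would argue the contrapositive. Suppose $X$ is \emph{not} order continuous. By the classical Meyer--Nieberg characterization, a Banach lattice is order continuous if and only if it contains no sublattice lattice-isomorphic to $c_0$; hence $X$ contains a closed copy of $c_0$. Now $c_0$ does not have the RNP: any point of a slice $S(x^*,\epsilon)$ of $B_{c_0}$, with $x^*\in S_{\ell_1}$, can be perturbed in the far coordinates to produce another slice point at distance arbitrarily close to $2$, so $B_{c_0}$ is not dentable. Since the RNP passes to closed subspaces (dentability is hereditary), $X$ cannot have the RNP. Equivalently, the RNP forces order continuity. (One can bypass Meyer--Nieberg and build the copy of $c_0$ by hand from a witness of non-order-continuity --- $f\in X_+$, $\delta>0$, $A_n\downarrow\emptyset$ with $\|f\chi_{A_n}\|_X\ge\delta$ --- via a gliding-hump selection; this is essentially the proof of Meyer--Nieberg's theorem.)

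\medskip

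\emph{Part (ii).} The Fatou property gives $X=X''$ isometrically, where $X''=(X')'$ is the K\"othe bidual; the goal is to recognise $X$ as the dual of $(X')_a$. First, $(X')_a$ is order continuous by definition. It also has full carrier and is separable: since $(X')_a=(X')_b$, it contains every bounded function supported on a set of finite measure (these lie in the closure $(X')_b$ of finitely supported simple functions), in particular every $\chi_{(0,t)}$; and it is separable because order continuity upgrades ``$\mu(E)$ small'' to ``$\|\chi_E\|_{X'}$ small'' on finite-measure sets --- if not, choose a bad subsequence with $\mu(E_k)<2^{-k}$ and $\|\chi_{E_k}\|_{X'}\ge\delta$, then $F_j=\bigcup_{k\ge j}E_k\downarrow\emptyset$ a.e.\ while $\|\chi_{F_j}\|_{X'}\ge\delta$, contradicting order continuity of $\chi_{F_1}\in(X')_a$ --- so finitely supported simple functions with rational coefficients over a countable generating ring of the (countably generated) measure algebra are norm-dense in $(X')_a$. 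Next, the standard duality $Y^*=Y'$ (isometrically) for an order continuous Banach function space $Y$, applied to $Y=(X')_a$, together with the fact that every element of $X'$ is an increasing a.e.\ limit of a norm-bounded sequence drawn from $(X')_b=(X')_a$ (so that $((X')_a)'=(X')'$), yields
\[
\big((X')_a\big)^* \;=\; \big((X')_a\big)' \;=\; (X')' \;=\; X'' \;=\; X .
\]
Thus $X$ is, isometrically, the dual of the separable space $(X')_a$. Finally $X$ is order continuous by hypothesis, hence separable (finitely supported simple functions are norm-dense by order continuity, countably many suffice over a countably generated measure algebra); and a separable Banach space that is isometric to a dual space has the RNP. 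Therefore $X$ has the RNP.

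\medskip

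The step I expect to be the main obstacle is the core of (ii): squeezing from the single hypothesis $(X')_a=(X')_b$ both the separability of $(X')_a$ and the identification $((X')_a)^*\cong X$ --- in particular checking that $(X')_a$ has full carrier so that its K\"othe dual is $X''$, and that order continuity really upgrades smallness of measure to smallness of $\|\chi_E\|_{X'}$. For a general complete $\sigma$-finite measure space the separability steps require an extra reduction: the RNP is separably determined, so one may pass to a separable order continuous sublattice of $X$ carried by a countably generated sub-$\sigma$-algebra on which all the hypotheses persist, and run the above argument there.
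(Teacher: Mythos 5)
The paper does not prove this statement; it is quoted from \cite{KLT}, so I am judging your argument against the standard proof there. Part (i) is essentially right and is the standard route, but your statement of the Meyer--Nieberg characterization is too strong as written: ``order continuous iff no sublattice lattice-isomorphic to $c_0$'' fails for general Banach lattices ($c_0$ itself is order continuous). The correct statement requires $\sigma$-Dedekind completeness, which does hold here because a Banach function space is an ideal in $L_0$ over a $\sigma$-finite measure; with that caveat the argument (lattice copy of $c_0$, $c_0$ fails RNP, RNP is hereditary) is fine. In part (ii) your skeleton is also the intended one: the Fatou property gives $X=X''$ (Lorentz--Luxemburg), and the hypothesis $(X')_a=(X')_b$ together with the Fatou property of $X'$ gives $\bigl((X')_a\bigr)^*\cong (X')'=X''=X$ isometrically --- this is exactly the tool the paper itself invokes from \cite[Corollary 1.4.2]{BS} in Section 3.2, so you do not need to re-derive $\bigl((X')_a\bigr)'=(X')'$ by hand.

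The genuine gap is the final step of (ii), passing from ``$X$ is a dual space'' to ``$X$ has the RNP.'' Your route requires $X$ (equivalently $(X')_a$) to be separable, but a complete $\sigma$-finite measure space need not have a separable measure algebra, and order continuity of $X$ does not yield separability in that case (e.g.\ $L_2$ of an uncountable product probability space is order continuous, a dual, and nonseparable). You acknowledge this and propose a separable-determination reduction, but that reduction is precisely the hard part and is not carried out: after restricting to a countably generated sub-$\sigma$-algebra $\Sigma_0$ you must show the restricted space is again isometric to a dual --- in particular that the hypothesis $(X')_a=(X')_b$ survives for the K\"othe dual computed over $\Sigma_0$, which is a different object (related to $X'$ via conditional expectation) and whose $a$- and $b$-parts need not coincide just because those of $X'$ do. Nothing in your sketch addresses this, so as written the proof only covers the separable case. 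The standard way to close the gap avoids separability altogether: an order continuous Banach function space over a $\sigma$-finite measure has a weak unit, hence is weakly compactly generated (order intervals are weakly compact and simple functions are dense), and a weakly compactly generated dual space has the RNP (Kuo's theorem; see Diestel--Uhl). Substituting that for your separability argument makes (ii) correct in full generality.
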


We mention that Theorem \ref{th:RNKothe} was implicitly used to characterize the Orlicz-Lorentz spaces $\Lambda_{\varphi,w}$ defined by an N-function $\varphi$ satisfying the RNP in relation to the appropriate $\Delta_2$-condition \cite{KT}. Here we consider all finite Orlicz functions and provide a characterization of the Orlicz-Lorentz spaces with the RNP.

We say an Orlicz function is an {\it N-function at infinity} if $\lim_{u \rightarrow \infty} \frac{\varphi(u)}{u}= \infty$. When $\varphi$ is not an N-function at infinity, we observe that the Orlicz-Lorentz space fails to have the RNP under a certain assumption on the weight function. To show this, we will use another formula for the norm in $\Lambda_{1, w}$ \cite[Section II.5, pg 111]{KPS}, that is,
\[
\|f\|_{1,w} = \int_0^{\infty}W(d_f(\lambda))d\lambda. 
\]	
Recall that $d_f(\lambda)$, $\lambda\in [0,\infty)$, is the distribution function of $f$.

\begin{Lemma}\label{lem:OLnoRNP}
	Let $\varphi$ be an Orlicz function and let $w$ be a weight function on $I =  [0, \gamma)$, where $\gamma < \infty$. If $\varphi$ is not an N-function at infinity and $\lim_{t \rightarrow 0+} \frac{W(t)}{t} = c > 0$, then the Orlicz-Lorentz space $\Lambda_{\varphi, w}(0,\gamma)$ coincides with $L_1(0, \gamma)$ as sets with equivalent norms.
\end{Lemma}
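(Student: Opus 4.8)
The plan is to sandwich the Luxemburg norm $\|\cdot\|_{\varphi,w}$ between two multiples of $\|\cdot\|_1$ by routing through the Lorentz space $\Lambda_{1,w}(0,\gamma)$ (the case $\varphi(u)=u$), establishing separately that (a) $\Lambda_{\varphi,w}(0,\gamma)$ and $\Lambda_{1,w}(0,\gamma)$ coincide as sets with equivalent norms, using only that $\varphi$ is not an N-function at infinity, and (b) $\Lambda_{1,w}(0,\gamma)=L_1(0,\gamma)$ with equivalent norms, using the hypothesis on $w$. The common ingredient is the elementary fact that $W(s)\asymp s$ on $(0,\gamma)$: since $w$ is decreasing, the average $s\mapsto W(s)/s$ is nonincreasing (for $s_1<s_2$ one has $w(t)\le W(s_1)/s_1$ for a.e. $t\in(s_1,s_2)$, whence $W(s_2)/s_2\le W(s_1)/s_1$), so it is bounded above by its limit $c=\lim_{s\to0+}W(s)/s$; and it is bounded below by $\inf_{0<s<\gamma}W(s)/s=W(\gamma^-)/\gamma$, which is a positive real number because $w>0$ and $\int_0^\gamma w<\infty$ (this last is where $\gamma<\infty$ and local integrability of the decreasing $w$ enter). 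Thus $c_1 s\le W(s)\le cs$ on $(0,\gamma)$ for suitable $0<c_1\le c<\infty$, with $c_1=W(\gamma^-)/\gamma$.

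For (b), I would invoke the quoted identity $\|f\|_{1,w}=\int_0^\infty W(d_f(\lambda))\,d\lambda$ together with $\|f\|_1=\int_0^\infty d_f(\lambda)\,d\lambda$. Since $d_f(\lambda)\le\gamma$ for every $\lambda>0$, substituting the two-sided bound $c_1 s\le W(s)\le cs$ at $s=d_f(\lambda)$ and integrating in $\lambda$ yields $c_1\|f\|_1\le\|f\|_{1,w}\le c\|f\|_1$ for all $f\in L_0$, which is exactly (b).

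For (a), I would note that convexity of $\varphi$ with $\varphi(0)=0$ makes $u\mapsto\varphi(u)/u$ nondecreasing, so $p:=\sup_{u>0}\varphi(u)/u=\lim_{u\to\infty}\varphi(u)/u$ is finite precisely because $\varphi$ is not an N-function at infinity; hence $\varphi(u)\le pu$ for all $u$, and by rearrangement invariance $\rho_{\varphi,w}(f)\le p\int_I f^*w=p\,\rho_{1,w}(f)=p\,\|f\|_{1,w}$ (the $p=1$ modular being itself the norm), from which $\|f\|_{\varphi,w}\le p\,\|f\|_{1,w}$. Conversely, choosing $u_2>0$ with $\varphi(u_2)>0$, monotonicity of $\varphi(u)/u$ gives $\varphi(u)\ge ku$ for $u\ge u_2$, where $k=\varphi(u_2)/u_2>0$. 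Given $f$ and any $\epsilon>\|f\|_{\varphi,w}$ one has $\rho_{\varphi,w}(f/\epsilon)\le1$; writing $\{(f/\epsilon)^*\ge u_2\}=(0,\tau)$, I split $\int_0^\tau(f/\epsilon)^*w\le\frac1k\rho_{\varphi,w}(f/\epsilon)\le\frac1k$ and $\int_\tau^\gamma(f/\epsilon)^*w\le u_2W(\gamma^-)$, so $\|f/\epsilon\|_{1,w}=\rho_{1,w}(f/\epsilon)\le\frac1k+u_2W(\gamma^-)$; letting $\epsilon\downarrow\|f\|_{\varphi,w}$ gives $\|f\|_{1,w}\le\bigl(\frac1k+u_2W(\gamma^-)\bigr)\|f\|_{\varphi,w}$. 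Chaining (a) and (b) produces constants $0<A\le B<\infty$ with $A\|f\|_1\le\|f\|_{\varphi,w}\le B\|f\|_1$, which is the assertion.

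The argument is a chain of elementary estimates, so I do not expect a serious obstacle; the only point needing a little care is the bookkeeping in passing between the modular $\rho_{\varphi,w}$ and the norm $\|\cdot\|_{\varphi,w}$ — handled above by testing at scales $\epsilon>\|f\|_{\varphi,w}$ rather than appealing to the Fatou property — together with ensuring that the ``bounded part'' of a function contributes only finitely much, which is exactly where finiteness of $\int_0^\gamma w$ (equivalently, $\gamma<\infty$) is used.
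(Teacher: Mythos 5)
Your proof is correct, and its skeleton matches the paper's: both rest on the two-sided bound $\frac{W(\gamma)}{\gamma}\,t \le W(t) \le c\,t$ coming from the monotonicity of $t\mapsto W(t)/t$, the bound $\varphi(u)\le Ku$ from the failure of the N-condition at infinity, the reverse bound $\varphi(u)\ge ku$ above a threshold, and the splitting of $f^*$ at that threshold to control $\|f\|_{1,w}$ by the modular plus the finite term $u_2 W(\gamma)$. The one place you genuinely diverge is the upper estimate $\|f\|_{\varphi,w}\lesssim \|f\|_1$: the paper proves it first for characteristic functions, extends to simple functions by the triangle inequality, and then passes to general $f\in L_1$ via monotone approximation and the Fatou property of both spaces, whereas you get it in one line from the pointwise inequality $\varphi(u)\le pu$ applied inside the modular, $\rho_{\varphi,w}(g)\le p\,\rho_{1,w}(g)=p\|g\|_{1,w}$, combined with homogeneity. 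Your route is shorter and avoids the approximation step entirely; it also packages the argument more transparently as two separate norm equivalences, $\Lambda_{\varphi,w}\simeq\Lambda_{1,w}$ (using only the non-N-function hypothesis) and $\Lambda_{1,w}\simeq L_1$ (using only the hypothesis on $w$, via the two-sided substitution into $\|f\|_{1,w}=\int_0^\infty W(d_f(\lambda))\,d\lambda$, where the paper uses only the lower half of that substitution). Your handling of the modular-to-norm passage by testing at scales $\epsilon>\|f\|_{\varphi,w}$ and letting $\epsilon$ decrease is also a clean way to sidestep the normalization $\rho_{\varphi,w}(f)\le 1$ at $\|f\|_{\varphi,w}=1$ that the paper uses implicitly.
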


\begin{proof}
		From the fact that $\frac{W(t)}{t}$ is decreasing and $\lim_{t \rightarrow 0+}\frac{W(t)}{t} = c > 0$, we have $\frac{W(\gamma)}{\gamma} \leq \frac{W(t)}{t}\leq c$ for every $t \in [0, \gamma]$. Thus
		\begin{equation}\label{eq:3.7.1}
		\frac{W(\gamma)}{\gamma} \cdot t \leq W(t)\leq c\cdot t
		\end{equation}
		for every $t \in [0, \gamma]$. 
		Since $\varphi$ is not an N-function at infinity and the function $\varphi(u)/u$ is increasing, we have $\lim_{u\to\infty} \varphi(u)/u = K < \infty$, so $\varphi(u) \leq Ku$ for every $u \geq 0$. Moreover, for a given $0 < M < K$, there exists $u_0 \ge 0$ such that $\varphi(u) \geq Mu$ for every $u \geq u_0$.
	
		Let $f \in \Lambda_{\varphi,w}(0, \gamma)$ such that $\|f\|_{\varphi,w} = 1$ and let $E = \{t \in I: |f(t)| \geq u_0\}$. Since $f^*$ and $f$ are equimeasurable, the set $\{t \in I: f^*(t) \geq u_0\}$ is actually an interval $[0, mE)$. Hence, we see that $
		f^*(t) \le \frac{1}{M} \varphi(f^*(t)) \,\,\,\text{for} \,\,\, t < mE$ and
		$f^*(t) < u_0 \,\,\, \text{for} \,\,\, t \geq mE  $.
		Then we obtain
		\begin{eqnarray*}
			\frac{W(\gamma)}{\gamma}\|f\|_1  = \frac{W(\gamma)}{\gamma} \cdot \int_0^{\infty}d_{f^*}(s)ds &\leq& \int_0^{\infty} W(d_{f^*}(s))ds = 	\|f\|_{1,w}\\
			&=& \int_0^{mE} f^*(s)w(s)ds + \int_{mE}^{\gamma} f^*(s)w(s)ds\\
			&\leq&  \int_0^{mE} f^*(s)w(s)ds + u_0 \cdot W(\gamma)\\
			&\leq& \frac{1}{M}\int_0^{mE}\varphi(f^*(s))w(s)ds + u_0 \cdot W(\gamma)\\
			&\leq& \frac{1}{M}\rho_{\varphi, w}(f) + u_0 \cdot W(\gamma)\\
			&\leq& \frac{1}{M} + u_0 \cdot W(\gamma) = C < \infty. 
		\end{eqnarray*}
		Hence, $\frac{W(\gamma)}{C\gamma}\cdot \|f\|_1 \leq \|f\|_{\varphi,w}$. 
	
		Now, since $\varphi(u) \leq Ku$ for every $u > 0$, for a given subset $F \subset [0, \gamma)$ we have
		\[
		\int_0^{\gamma} \varphi\left(\left(\frac{\chi_F}{KW(mF)}\right)^*\right)w  = \int_0^{mF} \varphi\left(\frac{1}{KW(mF)}\right)w = \varphi\left(\frac{1}{KW(mF)}\right) W(mF) \leq 1,
		\]
		and so $\|\chi_F\|_{\varphi,w} \leq K\cdot W(mF)$. Moreover, by (\ref{eq:3.7.1})  we have $\|\chi_F\|_{1,w} = W(mF) \leq c\cdot mF = c \cdot \|\chi_F\|_1$, and so $ \|\chi_F\|_{\varphi,w} \leq c \cdot K\|\chi_F\|_1$. 
		Now, consider a simple function $f = \sum_{i=1}^n a_i \chi_{F_i}$ where $a_1 > a_2 > \cdots > a_n > 0$ and $F_i$'s are pairwise disjoint measurable subsets of $(0, \gamma)$. Then 
		\[
		\|f\|_{\varphi, w} \leq \sum_{i=1}^{n} a_i\|\chi_{F_i}\|_{\varphi,w} \leq c \cdot K\cdot \sum_{i=1}^{n} a_i mF_i = c\cdot K\cdot \|f\|_1.
		\]
		Now, consider a function $f \in L_1(0, \gamma)$. Then there exists an increasing sequence $(f_n)$ of non-negative simple functions with support of finite measure such that $f_n \uparrow |f|$ a.e. Then  by the Fatou property of $L_1(0, \gamma)$ and $\Lambda_{\varphi,w}(0, \gamma)$ we have  that $\|f\|_{\varphi,w} \leq c \cdot K \cdot \|f\|_1$ for every $f \in L_1(0, \gamma)$. 
		Consequently, $\Lambda_{\varphi, w}(0, \gamma)$ and $L_1(0, \gamma)$ coincide as sets with equivalent norms.
\end{proof}

\begin{Theorem}\label{cor:OLnoRNP}
	Let $\varphi$ be an Orlicz function and let $w$ be a weight function. If $\varphi$ is not an N-function at infinity and $\lim_{t \rightarrow 0+} \frac{t}{W(t)} > 0$, then the Orlicz-Lorentz space $\Lambda_{\varphi,w}$ does not have the RNP.
\end{Theorem}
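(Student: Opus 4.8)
The plan is to deduce the statement from Lemma~\ref{lem:OLnoRNP} together with the classical facts that a nonatomic $L_1$-space fails the RNP and that the RNP is an isomorphic invariant which is inherited by closed subspaces. First the hypothesis on the weight is rephrased, and then the finite- and infinite-interval cases are handled separately.

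I would begin by observing that $\lim_{t\to 0+}t/W(t)>0$ is merely a reformulation of $\lim_{t\to 0+}W(t)/t=c$ for some finite $c$. Indeed, $W(t)/t$ is the average of the decreasing positive function $w$ over $[0,t]$, hence nonincreasing in $t$ and everywhere positive, so $\lim_{t\to 0+}W(t)/t$ exists in $(0,\infty]$ and the assumption says precisely that it is finite (and then automatically positive). Thus from now on we may assume $\lim_{t\to 0+}W(t)/t=c\in(0,\infty)$. Suppose first $\gamma<\infty$. Then Lemma~\ref{lem:OLnoRNP} applies verbatim and shows that $\Lambda_{\varphi,w}(0,\gamma)$ and $L_1(0,\gamma)$ coincide as sets with equivalent norms, hence are linearly isomorphic. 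Now $L_1(0,\gamma)$ is a nonatomic $L_1$-space, so it has the Daugavet property, hence the SD2P, and therefore the D2P and the LD2P; in particular every slice of its unit ball has diameter two, so $L_1(0,\gamma)$ does not have the RNP. Since the RNP is an isomorphic invariant, $\Lambda_{\varphi,w}(0,\gamma)$ does not have the RNP either.

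Now suppose $\gamma=\infty$. I would pass to the subspace $Y=\{f\in\Lambda_{\varphi,w}(0,\infty):f=0\text{ a.e. on }[1,\infty)\}$. If $f$ is supported on $[0,1)$ then $d_f(\lambda)\le 1$ for every $\lambda>0$, so the decreasing rearrangement of $f$ computed over $(0,\infty)$ vanishes on $[1,\infty)$ and agrees on $(0,1)$ with the decreasing rearrangement of $f|_{(0,1)}$ computed over $(0,1)$; hence $\rho_{\varphi,w}(f)=\int_0^1\varphi(f^*)w$. Consequently $f\mapsto f|_{(0,1)}$ is an isometric isomorphism of $Y$ onto the Orlicz-Lorentz function space $\Lambda_{\varphi,\tilde w}(0,1)$, where $\tilde w=w|_{[0,1)}$ and $\tilde W(t)=\int_0^t\tilde w$; in particular $Y$ is complete and therefore closed in $\Lambda_{\varphi,w}(0,\infty)$. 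Since $\tilde W=W$ on $[0,1)$, the weight $\tilde w$ still satisfies $\lim_{t\to 0+}t/\tilde W(t)>0$, and $\varphi$ is still not an N-function at infinity, so the case already treated shows that $\Lambda_{\varphi,\tilde w}(0,1)$, and hence $Y$, fails the RNP. As the RNP passes to closed subspaces, $\Lambda_{\varphi,w}(0,\infty)$ fails the RNP.

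The bulk of the work is carried by Lemma~\ref{lem:OLnoRNP}, so this is essentially a short corollary; the only point requiring a little care is the isometric identification of $Y$ with an Orlicz-Lorentz space over $(0,1)$ and the verification that the hypotheses on $\varphi$ and $w$ transfer to the restricted weight $\tilde w$, both of which follow from the elementary observation that the decreasing rearrangement of a function supported on $[0,1)$ is itself supported on $[0,1)$.
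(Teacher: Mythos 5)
Your proof is correct and follows essentially the same route as the paper: the finite-interval case is immediate from Lemma~\ref{lem:OLnoRNP} since $L_1(0,\gamma)$ fails the RNP, and the infinite-interval case is handled by restricting to functions supported on an initial finite subinterval, which form a closed subspace isometric to an Orlicz--Lorentz space over that subinterval, and invoking that the RNP passes to closed subspaces and is an isomorphic invariant. Your added care in checking that the restriction map is an isometry (via the behaviour of decreasing rearrangements of functions supported on $[0,1)$) and that the hypotheses transfer to the restricted weight only makes explicit what the paper leaves implicit.
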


\begin{proof}
		If $\gamma < \infty$, we immediately obtain the result in view of Lemma \ref{lem:OLnoRNP}. Now, suppose that $\gamma = \infty$ and assume to the contrary that $\Lambda_{\varphi,w}(0, \gamma)$ has the RNP. For a given $0 < \alpha < \gamma$, consider an isometric embedding $T: \Lambda_{\varphi,w}(0, \alpha) \rightarrow \Lambda_{\varphi,w}(0, \gamma)$ defined by $Tf = f\chi_{(0, \alpha)}$. Then $\Lambda_{\varphi,w}(0, \alpha)$ is a closed subspace of $\Lambda_{\varphi,w}(0, \gamma)$. In view of Lemma \ref{lem:OLnoRNP}, the space $\Lambda_{\varphi,w}(0, \alpha)$ coincides with $L_1(0, \alpha)$ as sets and have equivalent norms. By using the fact that the RNP is inherited by closed subspaces and that the property is invariant under isomorphisms, $L_1(0, \alpha)$ would need to have the RNP, which is a contradiction.	
\end{proof}

\begin{Remark}A non-negative function $f$ is said to be {\it equivalent to a linear function near infinity (resp. zero)} if there exist $a, b > 0$, and $t_0 > 0$ such that $at \leq f(t) \leq bt$ for every $t \geq t_0$ (resp. $0 \leq t \leq t_0$).  Now clearly we can rephrase Theorem \ref{cor:OLnoRNP} as follows.
{\it If $\varphi$ is equivalent to a linear function near infinity and $W$ is equivalent to a linear function near zero, then the Orlicz-Lorentz space $\Lambda_{\varphi, w}$ does not have the RNP.}
\end{Remark}

\begin{Remark}
Let us mention that an Orlicz function $\varphi$ not being an N-function at infinity does not fail the RNP entirely. For instance, if $\lim_{t \rightarrow 0+}\frac{t}{W(t)} = 0$, then the Lorentz space $\Lambda_{1,w}$ has the RNP \cite[Proposition 4.1]{AKM2}.
\end{Remark}

\begin{Theorem}\label{th:RNPsuff}
	Let $\varphi$ be an Orlicz function and let $w$ be a weight function. Then the Orlicz-Lorentz space has the RNP if and only if the following two conditions are satisfied:
		\begin{enumerate}[\rm(i)]
			\item Either $\varphi$ is an N-function at infinity or $\lim_{t \rightarrow 0+}\frac{t}{W(t)} = 0$.
			\item $\varphi$ satisfies the appropriate $\Delta_2$-condition.  
		\end{enumerate}
\end{Theorem}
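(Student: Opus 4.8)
The plan is to derive both implications from the general Radon--Nikod\'ym criterion of Theorem~\ref{th:RNKothe}, using throughout that Orlicz--Lorentz spaces have the Fatou property and that $\Lambda_{\varphi,w}$ is order-continuous exactly when $\varphi$ satisfies the appropriate $\Delta_2$-condition \cite{K}. For the necessity: if $\Lambda_{\varphi,w}$ has the RNP, then by Theorem~\ref{th:RNKothe}(i) it is order-continuous, so $\varphi$ satisfies the appropriate $\Delta_2$-condition, which is~(ii); and for~(i) I would note that $W(t)/t$ is decreasing, so that $c:=\lim_{t\to 0^+}t/W(t)$ exists in $[0,\infty)$, and that if~(i) failed then $\varphi$ would not be an N-function at infinity while $c>0$, so that Theorem~\ref{cor:OLnoRNP} would give a failure of the RNP, a contradiction.

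For the sufficiency, assume (i) and (ii). By~(ii) the space $\Lambda_{\varphi,w}$ is order-continuous and it has the Fatou property, so by Theorem~\ref{th:RNKothe}(ii) it is enough to check that the K\"othe dual $X'$ satisfies $(X')_a=(X')_b$. By Theorem~\ref{th:KLR}, $X'=\mathcal{M}_{\varphi_*,w}^0$, and since $X'_a$ and $X'_b$ are unchanged under replacing a norm by an equivalent one, it suffices to prove $(\mathcal{M}_{\varphi_*,w})_a=(\mathcal{M}_{\varphi_*,w})_b$ for the Luxemburg norm; as in the case of $\Lambda_{\varphi,w}$ itself, by \cite[Theorem~2.5.5]{BS} this reduces to showing
\[
\lim_{t\to 0^+}\phi_{\mathcal{M}_{\varphi_*,w}}(t)=0.
\]
Here I would apply Theorem~\ref{th:fundM} to the complementary function $\varphi_*$ --- which is convex, left-continuous, vanishes at $0$ and tends to $\infty$, so the theorem applies --- to get
\[
\phi_{\mathcal{M}_{\varphi_*,w}}(t)=\frac{t/W(t)}{\varphi_*^{-1}(1/W(t))}.
\]
Since $w$ is locally integrable, $W(t)\to 0^+$ as $t\to 0^+$, so $1/W(t)\to\infty$ and hence $\varphi_*^{-1}(1/W(t))\to b_{\varphi_*}$; moreover $t/W(t)$ increases to $c$ as $t\downarrow 0$ and so stays bounded near $0$. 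Since $b_{\varphi_*}=\lim_{u\to\infty}\varphi(u)/u\in(0,\infty]$, with $b_{\varphi_*}=\infty$ exactly when $\varphi$ is an N-function at infinity, condition~(i) splits into two cases: if $\varphi$ is an N-function at infinity the denominator tends to $\infty$ and the bounded numerator forces the quotient to $0$; if instead $c=0$ the numerator tends to $0$ while the denominator is eventually $\ge b_{\varphi_*}/2>0$, so again the quotient tends to $0$. Either way the displayed limit holds, and $\Lambda_{\varphi,w}$ has the RNP. The sequence-space case runs in parallel, via Theorem~\ref{th:KLR} and the sequence analogue of the fundamental-function formula.

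The step I expect to be the crux is this last one. For $\Lambda_{\varphi,w}$ itself $\phi_{\Lambda_{\varphi,w}}(t)=1/\varphi^{-1}(1/W(t))$ tends to $0$ automatically, but in $\phi_{\mathcal{M}_{\varphi_*,w}}$ the extra factor $t/W(t)$ need not vanish, and one must control the quotient through all the degenerate behaviours of $\varphi_*$ (namely $a_{\varphi_*}>0$, $b_{\varphi_*}<\infty$, or $\varphi_*$ attaining the value $0$ or $\infty$) --- which is precisely the generality in which Theorem~\ref{th:fundM} was proved. In fact the same computation shows that~(i) is \emph{equivalent} to $\lim_{t\to 0^+}\phi_{\mathcal{M}_{\varphi_*,w}}(t)=0$, so that, combined with Theorem~\ref{cor:OLnoRNP}, it also provides an alternative route to the necessity of~(i). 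Everything else is an assembly of the cited results.
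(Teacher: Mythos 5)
Your proposal is correct and follows essentially the same route as the paper's proof: necessity via Theorem~\ref{cor:OLnoRNP} and order-continuity from Theorem~\ref{th:RNKothe}(i), and sufficiency by computing $\lim_{t\to 0^+}\phi_{\mathcal{M}_{\varphi_*,w}}(t)=0$ from Theorem~\ref{th:fundM} in the two cases $b_{\varphi_*}=\infty$ and $b_{\varphi_*}<\infty$, then invoking Theorem~\ref{th:RNKothe}(ii). The only addition is your closing remark that (i) is equivalent to the vanishing of the fundamental function, which is consistent with, though not stated in, the paper's argument.
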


\begin{proof}
	From Theorem \ref{cor:OLnoRNP}, we see that if $\Lambda_{\varphi,w}$ has the RNP then either $\varphi$ is an N-function at infinity or $\lim_{t \rightarrow 0+}\frac{t}{W(t)} = 0$. In addition, $\Lambda_{\varphi,w}$ is order-continuous in view of Theorem \ref{th:RNKothe}.(i). Hence $\varphi$ satisfies the appropriate $\Delta_2$-condition.
	
	Conversely, we first show that either conditions in (i) implies $\lim_{t \rightarrow 0+}\phi_{\mathcal{M}_{\varphi_*,w}}(t) = 0$. Assume that the function $\varphi$ is an N-function at infinity. Then its conjugate function $\varphi_*$ is finite \cite[Lemma 3.4.(a)]{KLT}, and so $b_{\varphi_*} = \infty$. Then by Theorem \ref{th:fundM}, we have 
		\[
	\phi_{\mathcal{M}_{\varphi_*,w}}(t) = \|\chi_{(0, t)}\|_{\mathcal{M}_{\varphi_*,w}} =  \frac{t/W(t)}{\varphi_*^{-1}(1/W(t))},
	\]
	where $\varphi_*^{-1}$ is the inverse function of $\varphi_*$ restricted to the interval $(a_{\varphi_*}, \infty)$.	
	Since the function $t \mapsto \frac{t}{W(t)}$ is increasing, we have $\lim_{t \rightarrow 0+}\frac{t}{W(t)} = L \in [0, \infty)$. Moreover, $\varphi_*^{-1}(1/W(t)) \rightarrow b_{\varphi_*} =\infty$ as $t \rightarrow 0+$. This implies that $\lim_{t \rightarrow 0+}\phi_{\mathcal{M}_{\varphi_*,w}}(t) = 0$.
	
	Now, assume that $\varphi$ is not an N-function at infinity. Then there exists $0 < K < \infty$ such that $\varphi(u) \leq Ku$. Hence, we have $\varphi_*(v) = \sup_{u > 0}\{uv - \varphi(u)\}\geq \sup_{u > 0}\{(v - K)u\}$. This shows that $\varphi_*(v) = \infty$ for every $v > K$, and so $b_{\varphi_*} = K$. Since $\frac{1}{W(t)} \rightarrow \infty$ as $t \rightarrow 0+$, we have $\varphi_*^{-1}(\frac{1}{W(t)}) \rightarrow b_{\varphi_*}$ as $t \rightarrow 0+$. Thus if $\lim_{t \rightarrow 0+}\frac{t}{W(t)} = 0$, we obtain 
		\[
		\lim_{t \rightarrow 0+}\phi_{\mathcal{M}_{\varphi_*,w}}(t) =  \frac{\lim_{t \rightarrow 0+}(t/W(t))}{\lim_{t \rightarrow 0+}\varphi_*^{-1}(1/W(t))} = \frac{0}{b_{\varphi_*}} = 0.
		\]
	
	We have shown that $\lim_{t \rightarrow 0+}\phi_{\mathcal{M}_{\varphi_*,w}}(t) = 0$ when either conditions in (i) hold. Since the Luxemburg norm and the Orlicz norm for the space $\mathcal{M}_{\varphi_*, w}$ are equivalent, we also get that $\lim_{t \rightarrow 0+}\phi_{\mathcal{M}_{\varphi_*,w}^0}(t) = 0$. Hence $((\Lambda_{\varphi,w})')_a = (\mathcal{M}_{\varphi_*,w}^0)_a = (\mathcal{M}_{\varphi_*,w}^0)_b = ((\Lambda_{\varphi,w})')_b$ by Theorem \ref{th:KLR}. Now we assume additionally that $\varphi$ satisfies the appropriate $\Delta_2$-condition. Then $\Lambda_{\varphi,w}$ is order-continuous \cite[Theorem 2.4]{K}. Therefore, the space $\Lambda_{\varphi,w}$ has the RNP by Theorem \ref{th:RNKothe}.(ii).
\end{proof}

We also have the sequence analogue of Theorem \ref{th:RNPsuff}.
\begin{Theorem}
	Let $\varphi$ be an Orlicz function and $w$ be a weight function. Then the Orlicz-Lorentz sequence space $\lambda_{\varphi, w}$ has the RNP if and only if $\varphi$ has the $\Delta_2^0$-condition.
\end{Theorem}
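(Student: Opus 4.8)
The plan is to run the same machinery as in the proof of Theorem~\ref{th:RNPsuff}, namely Theorem~\ref{th:RNKothe}, and to observe that the extra hypothesis~(i) appearing there becomes \emph{automatic} in the sequence setting. The crucial structural fact is the elementary lattice identity $Y_a=Y_b$ for \emph{any} Banach sequence space $Y$: the inclusion $Y_a\subseteq Y_b$ always holds, and conversely, given $x\in Y_b$ and a sequence $f_n\downarrow 0$ pointwise with $f_n\le|x|$, split $f_n=f_n\chi_{\{1,\dots,m\}}+f_n\chi_{\{m+1,m+2,\dots\}}$; the tail is dominated uniformly in $n$ by $\|x\chi_{\{m+1,m+2,\dots\}}\|_Y$, which is arbitrarily small for $m$ large because $x\in Y_b$, while for fixed $m$ the head lives in a fixed finite-dimensional sublattice and hence $\|f_n\chi_{\{1,\dots,m\}}\|_Y\to 0$ as $n\to\infty$; thus $\|f_n\|_Y\to 0$, i.e.\ $x\in Y_a$. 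Applying this with $Y=(\lambda_{\varphi,w})'=\mathfrak{m}_{\varphi_*,w}^0$ (Theorem~\ref{th:KLR}) gives $\bigl((\lambda_{\varphi,w})'\bigr)_a=\bigl((\lambda_{\varphi,w})'\bigr)_b$ with no restriction whatsoever on $\varphi$ or $w$ — this is exactly the place where condition~(i) was used in the function case, and it is now vacuous.

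For the necessity of the $\Delta_2^0$-condition, suppose $\lambda_{\varphi,w}$ has the RNP. Since $\mathbb{N}$ with the counting measure is a complete $\sigma$-finite measure space, Theorem~\ref{th:RNKothe}(i) applies and shows that $\lambda_{\varphi,w}$ is order-continuous. By the sequence analogue of \cite[Theorem~2.4]{K}, order-continuity of $\lambda_{\varphi,w}$ is equivalent to $\varphi$ satisfying the $\Delta_2^0$-condition; hence $\varphi\in\Delta_2^0$.

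For the sufficiency, assume $\varphi\in\Delta_2^0$. Then $\lambda_{\varphi,w}$ is order-continuous, it has the Fatou property, and by the first paragraph its K\"othe dual satisfies $\bigl((\lambda_{\varphi,w})'\bigr)_a=\bigl((\lambda_{\varphi,w})'\bigr)_b$. Theorem~\ref{th:RNKothe}(ii) then yields that $\lambda_{\varphi,w}$ has the RNP, completing the equivalence.

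The only items that genuinely need to be pinned down are that the results of \cite{KLT} recorded here as Theorem~\ref{th:RNKothe} remain valid over a purely atomic $\sigma$-finite measure space — which they do, as their proofs never invoke nonatomicity — and the standard equivalence between order-continuity of $\lambda_{\varphi,w}$ and the $\Delta_2^0$-condition. I do not expect a real obstacle here; in fact this case is strictly easier than the function case, since there is no sequence analogue of Lemma~\ref{lem:OLnoRNP}: that phenomenon reflected the failure of the RNP for $L_1(0,\gamma)$, whereas $\ell_1$ does have the RNP, so no additional condition on the weight $w$ is forced.
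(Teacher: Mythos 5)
Your proposal is correct and takes essentially the same route as the paper: both directions reduce to Theorem~\ref{th:RNKothe} combined with the equivalence between order-continuity of $\lambda_{\varphi,w}$ and the $\Delta_2^0$-condition from \cite{K}, with the key observation that $\bigl((\lambda_{\varphi,w})'\bigr)_a=\bigl((\lambda_{\varphi,w})'\bigr)_b$ holds automatically in the sequence setting, so that condition~(i) of Theorem~\ref{th:RNPsuff} disappears. The only cosmetic difference is that you verify this last identity by an elementary head--tail splitting, whereas the paper simply cites \cite[Theorem~2.5.4]{BS}.
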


\begin{proof}
	From the fact that the space $\mathfrak{m}_{\varphi_*,w}^0$  is rearrangement invariant Banach sequence space and has the Fatou property, we always have $(\mathfrak{m}_{\varphi_*,w}^0)_a = ((\lambda_{\varphi,w})')_a = ((\lambda_{\varphi,w})')_b = (\mathfrak{m}_{\varphi_*,w}^0)_b$ by \cite[Theorem 2.5.4]{BS}. If $\varphi$ satisfies the $\Delta_2^0$-condition, then $\lambda_{\varphi, w}$ is order-continuous \cite{K}. Hence by Theorem \ref{th:RNKothe}.(ii), the space $\lambda_{\varphi, w}$ has the RNP.
	
	Conversely, if $\lambda_{\varphi,w}$ has the RNP, then the space is order-continuous by Theorem \ref{th:RNKothe}.(i). Since $\lambda_{\varphi,w}$ is order-continuous if and only if $\varphi$ satisfies the $\Delta_2^0$-condition \cite{K}, we obtain the desired claim. 
\end{proof}

The following corollary is an immediate consequence of Theorem \ref {th:RNPsuff} to $\varphi(u) = ku$, with $k > 0$.

\begin{Theorem}\label{th:lorRNPchar}
	Let $w$ be a weight function. The Lorentz space $\Lambda_{1,w}$ has the RNP if and only if $\lim_{t \rightarrow 0+}\frac{t}{W(t)} = 0$.
\end{Theorem}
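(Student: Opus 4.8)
The plan is to obtain this directly from Theorem \ref{th:RNPsuff} by specializing to the Orlicz function $\varphi(u) = u$ (equivalently $\varphi(u) = ku$ for any fixed $k > 0$, which defines the space $\Lambda_{1,w}$ up to renorming by the constant $k$). First I would record that this $\varphi$ is a bona fide Orlicz function: it is convex, vanishes at $0$, and is not identically zero, so Theorem \ref{th:RNPsuff} is applicable to $\Lambda_{\varphi,w} = \Lambda_{1,w}$.

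Next I would verify the two conditions of Theorem \ref{th:RNPsuff} for $\varphi(u) = u$. Since $\varphi(u)/u \equiv 1$, we have $\lim_{u\to\infty}\varphi(u)/u = 1 < \infty$, so $\varphi$ is \emph{not} an N-function at infinity; hence the disjunction in condition (i) collapses to the single requirement $\lim_{t\to 0+} t/W(t) = 0$ (a limit that exists in $[0,\infty)$ because $t \mapsto t/W(t)$ is increasing, as already observed in the proof of Theorem \ref{th:RNPsuff}). For condition (ii), note $\varphi(2u) = 2u = 2\varphi(u)$, so $\varphi$ satisfies the $\Delta_2^0$-, the $\Delta_2^\infty$-, and therefore the $\Delta_2$-condition with any constant $K > 2$; in particular it satisfies the appropriate $\Delta_2$-condition whether $\gamma$ is finite or infinite. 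Thus condition (ii) is automatic, and Theorem \ref{th:RNPsuff} reduces precisely to the stated equivalence: $\Lambda_{1,w}$ has the RNP if and only if $\lim_{t\to0+} t/W(t) = 0$.

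There is essentially no obstacle here; the only point deserving a word of care is the reduction to $\varphi(u) = u$. Replacing $\varphi(u) = u$ by $\varphi(u) = ku$ rescales the Luxemburg norm by the scalar factor $k$ (indeed $\|f\|_{\varphi,w} = k\,\|f\|_{1,w}$ directly from the definition of the modular and the norm), so it yields the same Banach space up to an isometric rescaling; since the RNP is an isometric — indeed isomorphic — invariant, the corollary genuinely covers all $\varphi(u) = ku$, $k > 0$, as asserted in the text preceding the statement.
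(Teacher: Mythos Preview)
Your proposal is correct and follows exactly the paper's approach: the paper states this result as an immediate consequence of Theorem~\ref{th:RNPsuff} applied to $\varphi(u)=ku$, and you have simply written out the verification that for such $\varphi$ condition~(ii) is automatic while condition~(i) collapses to $\lim_{t\to 0+} t/W(t)=0$.
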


\subsection{The diameter two properties of Orlicz-Lorentz spaces}
Now, we study the diameter two properties of Orlicz-Lorentz spaces $\Lambda_{\varphi, w}$ and $\lambda_{\varphi,w}$ defined by N-functions at infinity. Here the theory of M-ideals plays an important role. A closed subspace $Y$ is said to be an {\it M-ideal} in $X$ if $Y^{\perp}$ is the range of the bounded projection $P:X^* \rightarrow X^*$ such that $\|F\|_{X^*} = \|P(F)\|_{X^*} + \|(I - P)(F)\|_{X^*}$ for $F \in X^*$.

If a Banach function lattice $X$ has the Fatou property and $X_a = X_b$, then the K\"othe dual $X'$ is isometrically isomorphic to $(X_a)^*$ \cite[Corollary 1.4.2]{BS}. Moreover, there exists a unique decomposition of a bounded linear functional $F = H + S$, where $H$ and $S$ are regular and singular linear functionals, respectively \cite{Z}. Since any singular functional returns zero for every order-continuous element, we see that $X^* = (X_a)^* \oplus (X_a)^{\perp}$ is isometrically isomorphic to $X'\oplus X_{s}^*$, where $X_{s}^*$ denotes the set of all singular functionals on $X$. Hence, we can say that the order-continuous subspace $X_a$ is an M-ideal in $X$ if $\|F\|_{X^*} = \|H\|_{X^*} + \|S\|_{X^*} = \|h\|_{X'} + \|S\|_{X^*}$.

For the class of Orlicz-Lorentz spaces defined by N-functions, it has been shown that the order-continuous subspaces $(\Lambda_{\varphi,w})_a$ and $(\lambda_{\varphi,w})_a$ are M-ideals in $\Lambda_{\varphi,w}$ and $\lambda_{\varphi,w}$, respectively \cite{KLT2}. Here we extend these results to a broader class of Orlicz-Lorentz function and sequence spaces by using the relationship between M-ideals and intersection of balls. Our results generalize the analogous results on Orlicz function and sequence spaces in \cite{HWW}.  

\begin{Lemma} \cite[Theorem I.2.2]{HWW}\label{Werner}
	For a Banach space $X$, the following statements are equivalent.
	\begin{enumerate}[\rm(i)]
		\item A subspace $Y$ is an M-ideal in $X$.
		\item (3-ball property) For all $y_1, y_2, y_3 \in B_Y$, for all $x \in B_X$ and $\epsilon >0$ there exists $y \in Y$ such that $\|x + y_i - y\| \leq 1 + \epsilon$ for $i = 1,2,3$. 
	\end{enumerate}
\end{Lemma}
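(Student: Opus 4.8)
The plan is to prove this classical characterisation (due to Alfsen and Effros) by treating the two implications separately; the forward implication is short, and the backward one carries essentially all the difficulty. Fix, for the M-ideal hypothesis, the norm-one projection $P\colon X^{*}\to X^{*}$ with range $Y^{\perp}$ such that $\|f\|=\|Pf\|+\|f-Pf\|$ for every $f\in X^{*}$, put $J=(I-P)X^{*}$, and note that $Y^{\perp}\cap J=\{0\}$ because $P$ is idempotent with range $Y^{\perp}$.

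To prove (i) $\Rightarrow$ (ii), fix $y_{1},y_{2},y_{3}\in B_{Y}$, $x\in B_{X}$ and $\varepsilon>0$; it suffices to show $\inf_{y\in Y}\max_{1\le i\le 3}\|x+y_{i}-y\|\le 1$, since the infimum need not be attained. Writing $\max_{i}a_{i}=\sup_{\mu\in\Delta}\sum_{i}\mu_{i}a_{i}$, where $\Delta=\{\mu\in\mathbb{R}^{3}:\mu_{i}\ge 0,\ \sum_{i}\mu_{i}=1\}$, and applying Sion's minimax theorem (legitimate since $\Delta$ is compact and the quantity is affine and continuous in $\mu$, convex and continuous in $y$), this infimum equals $\sup_{\mu\in\Delta}\inf_{y\in Y}\sum_{i}\mu_{i}\|x+y_{i}-y\|$. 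For fixed $\mu$ the inner infimum is the distance, in the $\ell_{1}$-sum $X\oplus_{1}X\oplus_{1}X$, from $(\mu_{i}(x+y_{i}))_{i}$ to the closed subspace $\{(\mu_{i}y)_{i}:y\in Y\}$, hence by Hahn--Banach duality equals
\[
\sup\Bigl\{\ \sum_{i}\mu_{i}f_{i}(x+y_{i})\ :\ f_{i}\in B_{X^{*}},\ \ \sum_{i}\mu_{i}f_{i}\in Y^{\perp}\ \Bigr\}.
\]
Given such $f_{i}$, write $g_{i}=Pf_{i}\in Y^{\perp}$ and $h_{i}=f_{i}-g_{i}\in J$, so that $\|f_{i}\|=\|g_{i}\|+\|h_{i}\|$. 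Since $\sum_{i}\mu_{i}f_{i}\in Y^{\perp}$ and $\sum_{i}\mu_{i}g_{i}\in Y^{\perp}$ while $\sum_{i}\mu_{i}h_{i}\in J$, and $Y^{\perp}\cap J=\{0\}$, we get $\sum_{i}\mu_{i}h_{i}=0$; using $g_{i}(y_{i})=0$,
\[
\sum_{i}\mu_{i}f_{i}(x+y_{i})=\Bigl(\sum_{i}\mu_{i}g_{i}\Bigr)(x)+\sum_{i}\mu_{i}h_{i}(y_{i})\ \le\ \sum_{i}\mu_{i}\bigl(\|g_{i}\|+\|h_{i}\|\bigr)=\sum_{i}\mu_{i}\|f_{i}\|\le 1 .
\]
Hence $\inf_{y}\max_{i}\|x+y_{i}-y\|\le 1$, and any $y\in Y$ almost attaining the infimum furnishes the required element.

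For (ii) $\Rightarrow$ (i) the plan is to construct the L-projection on $X^{*}$ out of the $3$-ball property. Since $Y^{\perp}$ is weak$^{*}$-closed and the closed balls of $X^{*}$ are weak$^{*}$-compact, $Y^{\perp}$ is proximinal in $X^{*}$; let $Pf$ be a nearest point of $Y^{\perp}$ to $f$. It then has to be shown that $Pf$ is unique, that $f\mapsto Pf$ is linear, and that $\|f\|=\|Pf\|+\|f-Pf\|$ — which together say exactly that $X^{*}=Y^{\perp}\oplus_{1}(I-P)X^{*}$, i.e.\ that $Y$ is an M-ideal. (Equivalently one may pass to the bidual, construct the M-projection of $X^{**}$ onto the weak$^{*}$-closed subspace $Y^{\perp\perp}$, and invoke the standard fact that $Y$ is an M-ideal in $X$ if and only if $Y^{\perp\perp}$ is an M-summand in $X^{**}$ \cite{HWW}; there Goldstine's theorem, weak$^{*}$-lower semicontinuity of the norm, and weak$^{*}$-compactness of bounded sets upgrade the $3$-ball property of $Y$ in $X$ to its exact form for $Y^{\perp\perp}$ in $X^{**}$.) The bridge between the $3$-ball hypothesis and these assertions runs through the $n$-ball property: first one proves, by an induction grouping balls three at a time, that the $3$-ball property implies the $n$-ball property for every $n$, and then one transports the latter to $X^{*}$ by Hahn--Banach and weak$^{*}$-compactness to force the metric projection onto $Y^{\perp}$ to be single-valued, additive, and to split the norm.

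I expect practically all of the work to lie in (ii) $\Rightarrow$ (i), and there in two places: the combinatorial passage from the $3$-ball property to the $n$-ball property — precisely where having three balls rather than two is indispensable, the $2$-ball property being strictly weaker and not characterising M-ideals — and the extraction of \emph{linearity} of the metric projection together with the additive decomposition $\|f\|=\|Pf\|+\|f-Pf\|$ from the $n$-ball property. By contrast, as the computation above shows, (i) $\Rightarrow$ (ii) is a one-paragraph minimax argument once the L-decomposition of $X^{*}$ is available.
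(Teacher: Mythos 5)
The paper does not prove this statement at all: it is quoted verbatim from \cite[Theorem I.2.2]{HWW} as a known characterization of M-ideals, so there is no in-paper argument to compare yours against. Judged on its own, your write-up of (i) $\Rightarrow$ (ii) is complete and correct: the reduction of $\inf_{y\in Y}\max_i\|x+y_i-y\|$ to $\sup_{\mu\in\Delta}\inf_{y\in Y}\sum_i\mu_i\|x+y_i-y\|$ via Sion's theorem is legitimate ($\Delta$ compact, the functional affine in $\mu$ and convex continuous in $y$), the identification of the inner infimum with a distance in $X\oplus_1X\oplus_1X$ and its dual expression are standard, and the estimate using $\|f_i\|=\|Pf_i\|+\|f_i-Pf_i\|$ together with $\sum_i\mu_i h_i=0$ is airtight. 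This is a clean route to the easy implication.

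The converse, however, is where the theorem actually lives, and there you have written a table of contents rather than a proof. Proximinality of $Y^{\perp}$ in $X^{*}$ gives you a (possibly multivalued, possibly nonlinear) nearest-point map, but everything that makes $Y$ an M-ideal --- single-valuedness and linearity of $P$ and the identity $\|f\|=\|Pf\|+\|f-Pf\|$ --- is exactly what remains to be extracted from the 3-ball property, and you do not carry out either of the two steps you correctly identify as the crux. In particular, the claim that the 3-ball property yields the $n$-ball property ``by an induction grouping balls three at a time'' is not a routine combinatorial induction: na\"ively replacing three of the $n$ balls by one destroys the containment constraints, and the standard treatment (HWW) in fact goes the other way, proving $3$-ball $\Rightarrow$ M-ideal directly by establishing the key inequality $\|g+h\|\geq\|g\|+\|h\|-\varepsilon$ for $g\in Y^{\perp}$ and suitable Hahn--Banach extensions $h$ of restrictions to $Y$, using the 3-ball property with carefully chosen almost-norming elements $x\in B_X$ and $y_1,y_2,y_3\in B_Y$, and only then deduces the $n$-ball property from the M-ideal structure. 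Without that argument (or a genuine substitute), the implication (ii) $\Rightarrow$ (i) is unproved, so the proposal as it stands establishes only half of the lemma.
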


\begin{Theorem}\label{th: MidealOL}
	Let $\varphi$ be a nondegenerate Orlicz function and let $w$ be a weight function. Then the order-continuous subspace $(\Lambda_{\varphi,w})_a$ is an M-ideal in $\Lambda_{\varphi,w}$.
\end{Theorem}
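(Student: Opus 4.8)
The plan is to verify the $3$-ball property of Lemma~\ref{Werner} for the inclusion $(\Lambda_{\varphi,w})_a\subseteq\Lambda_{\varphi,w}$. Fix $y_1,y_2,y_3\in B_{(\Lambda_{\varphi,w})_a}$, $f\in B_{\Lambda_{\varphi,w}}$ and $\epsilon>0$; we must produce $y\in(\Lambda_{\varphi,w})_a$ with $\|f+y_i-y\|_{\varphi,w}\le 1+\epsilon$ for $i=1,2,3$. First I would normalize the $y_i$'s: since each $y_i$ is order-continuous, $\|y_i\chi_{\{|y_i|>M\}}\|_{\varphi,w}\to 0$ and $\|y_i\chi_{(R,\gamma)}\|_{\varphi,w}\to 0$ as $M,R\to\infty$, so one can fix $M>0$ and $R\in(0,\gamma)$ so that $u_i:=y_i\chi_{\{|y_i|\le M\}}\chi_{(0,R)}$ satisfies $\|y_i-u_i\|_{\varphi,w}<\epsilon/2$ for all $i$. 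Put $A:=(0,R)$. Each $u_i$ is a bounded function with finite-measure support, hence $u_i\in(\Lambda_{\varphi,w})_b=(\Lambda_{\varphi,w})_a$, with $\|u_i\|_{\varphi,w}\le 1$ and thus $\rho_{\varphi,w}(u_i)\le 1$ (using the Fatou property, $\|g\|_{\varphi,w}\le 1\Rightarrow\rho_{\varphi,w}(g)\le 1$); likewise $\rho_{\varphi,w}(f)\le 1$. By the triangle inequality it suffices to find $y\in(\Lambda_{\varphi,w})_a$ with $\|f+u_i-y\|_{\varphi,w}\le 1+\epsilon/2$ for $i=1,2,3$.

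The candidate is $y:=f\chi_{\{\delta\le|f|\le N\}}$ with $0\le\delta<N$ to be chosen (one may take $\delta=0$ when $\gamma<\infty$). For $\delta>0$ the set $\{|f|\ge\delta\}$ has finite measure, because $\varphi$ is nondegenerate and $\rho_{\varphi,w}(kf)<\infty$ for some $k>0$; hence $y$ is a bounded function supported on a set of finite measure, so $y\in(\Lambda_{\varphi,w})_b=(\Lambda_{\varphi,w})_a$. Decompose $I$ into the disjoint sets $G:=\{f=0\}\cup\{\delta\le|f|\le N\}$, $P^-:=\{0<|f|<\delta\}$ and $P^+:=\{|f|>N\}$; on $G$ one has $f+u_i-y=u_i$ and on $P^-\cup P^+$ one has $f+u_i-y=f+u_i$. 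Since $\rho_{\varphi,w}$ is orthogonally subadditive,
\[
\rho_{\varphi,w}\!\Big(\tfrac{f+u_i-y}{1+\epsilon/2}\Big)\le
\rho_{\varphi,w}\!\Big(\tfrac{u_i\chi_{G}}{1+\epsilon/2}\Big)
+\rho_{\varphi,w}\!\Big(\tfrac{(f+u_i)\chi_{P^-}}{1+\epsilon/2}\Big)
+\rho_{\varphi,w}\!\Big(\tfrac{(f+u_i)\chi_{P^+}}{1+\epsilon/2}\Big).
\]
The first term is at most $\tfrac{1}{1+\epsilon/2}\,\rho_{\varphi,w}(u_i)\le\tfrac{1}{1+\epsilon/2}$ by convexity of $\varphi$ and monotonicity of $\rho_{\varphi,w}$. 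This is the crux: the "bulk'' term is bounded by $\tfrac{1}{1+\epsilon/2}<1$, leaving room $1-\tfrac{1}{1+\epsilon/2}$ for the two error terms, which I will make arbitrarily small.

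For the error terms I would split $P^\pm$ according to $A$. On $P^-\setminus A$ and $P^+\setminus A$ one has $u_i=0$, so those pieces contribute at most $\rho_{\varphi,w}(f\chi_{P^-})+\rho_{\varphi,w}(f\chi_{P^+})$, and both summands tend to $0$ as $\delta\downarrow 0$ and $N\to\infty$ since $\varphi(f^*)w$ is integrable ($\rho_{\varphi,w}(f\chi_{P^+})=\int_0^{d_f(N)}\varphi(f^*)w$, and $|f|\chi_{\{0<|f|<\delta\}}\downarrow 0$ a.e. forces $\rho_{\varphi,w}(f\chi_{P^-})\to 0$). On $P^-\cap A$ one has $|f+u_i|\le\delta+M$ on a set of measure $m(P^-\cap A)\to 0$, so this piece is at most $\varphi\!\big(\tfrac{\delta+M}{1+\epsilon/2}\big)W\!\big(m(P^-\cap A)\big)\to 0$ as $\delta\downarrow 0$. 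On $P^+\cap A$, if $N$ is chosen with $M/N\le\epsilon/2$, then $|f+u_i|\le(1+M/N)|f|$ there, so (using $\tfrac{1+M/N}{1+\epsilon/2}\le 1$) this piece is at most $\rho_{\varphi,w}(f\chi_{P^+})\to 0$. All four estimates depend only on $f,A,M,\epsilon$, so choosing $\delta$ small and $N$ large enough that their sum is $<1-\tfrac{1}{1+\epsilon/2}$ works simultaneously for $i=1,2,3$; then $\rho_{\varphi,w}\big(\tfrac{f+u_i-y}{1+\epsilon/2}\big)<1$, i.e. $\|f+u_i-y\|_{\varphi,w}\le 1+\epsilon/2$, as required.

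The step I expect to be the main obstacle is the choice of $y$ itself in the case $\gamma=\infty$ together with a possible failure of the $\Delta_2^0$-condition for $\varphi$: then the naive truncation $f\chi_{\{|f|\le N\}}$ need not be order-continuous, and a restriction of $f$ to a set of small measure need not have small norm, so one cannot simply "chop off'' part of $f$. This is circumvented by also discarding the very small values of $f$ (the parameter $\delta$), which keeps $y$ inside $(\Lambda_{\varphi,w})_a$, and by estimating every residual piece of $f$ only at the scale $1/(1+\epsilon/2)<1$ — never inflating $f$ — using nothing beyond monotonicity and convexity of $\varphi$ and integrability of $\varphi(f^*)w$; the remaining delicate point is the uniformity of the thresholds over $y_1,y_2,y_3$, which holds because all the bounds above involve only the common data.
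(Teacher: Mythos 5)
Your proof is correct and follows essentially the same route as the paper's: both verify the 3-ball property of Lemma~\ref{Werner} with $y$ a two-sided truncation of $f$ (discarding the very small and very large values of $f$), create modular room below $1$ by a convexity/scaling trick applied to the order-continuous elements, and control the residual pieces via orthogonal subadditivity and dominated convergence. The only difference is bookkeeping: the paper cuts the $g_i$'s down to the support of the truncation of $f$ (so only two disjoint pieces appear), whereas you truncate the $u_i$'s by their own level sets and therefore need the extra four-way split with the $M/N\le\epsilon/2$ estimate, but this is a minor variation, not a different argument.
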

\begin{proof}
		Let $f \in B_{\Lambda_{\varphi,w}}$. First, notice that $d_f(\lambda) < \infty$ for every $\lambda > 0$. Indeed, suppose that there exists $\lambda > 0$ such that $d_{f}(\lambda) = m\{|f(t)| > \lambda\} = \infty$. By the equimeasurability between $f$ and $f^*$, we have $m\{f^*(t) > \lambda\} = \infty$. From the fact that $f \in B_{\Lambda_{\varphi,w}}$, there exists $k > 0$ such that $\rho_{\varphi, w}(kf) < \infty$. But then we have
		\[
		\int_I \varphi(kf^*)w \geq \int_0^{m\{f^*(t) > \lambda\}} \varphi(k\lambda)w = \varphi(k\lambda)\cdot \int_0^{\infty}w = \infty,
		\]  
		which is a contradiction.
		
		Now, define $f_n = f \chi_{\{\frac{1}{n} \leq |f| \leq n\}}$, $n\in\mathbb{N}$. Since $f - f_n \downarrow 0$ a.e., $|f_n| \leq |f|$ a.e., and $d_f(\lambda) < \infty$ for every $\lambda > 0$, we have $(f - f_n)^* \rightarrow 0$ a.e. \cite[Property 12$^{\circ}$, pg 67]{KPS}. So we obtain $\rho_{\varphi,w}(f- f_n) \rightarrow 0$ by the Lebesgue dominated convergence theorem. Hence for any $\delta > 0$, there exists $N_1 \in \mathbb{N}$ such that $\rho_{\varphi,w}(f - f_{N_1}) < \delta$. 
		
		Now, let $g_i \in B_{(\Lambda_{\varphi,w})_a}, i = 1,2,3$, and $\epsilon > 0$ be arbitrary. From the fact that $\chi_{\{|f| > n\} \cup \{|f| < \frac{1}{n}\}} \rightarrow 0$ a.e., we have $g_i - g_i \chi_{\{\frac{1}{n} \leq |f| \leq n\}} = g_i \chi_{\{|f| < \frac{1}{n}\} \cup \{|f| > n\}}\rightarrow 0$ a.e. by the order-continuity of $g_i$. Hence, for every $\epsilon > 0$, we can choose $N_2 \in \mathbb{N}$ such that $\|g_i \chi_{\{|f| < \frac{1}{N_2}\}\cup\{|f| > N_2\}}\|_{\varphi,w} \leq \frac{\epsilon^2}{1 + \epsilon}$. By letting $N = \max\{N_1, N_2\}$, we have
		\begin{equation}\label{eq:rhocontrol}
			\rho_{\varphi,w}(f - f_{N}) < \delta\,\,\, \text{and} \,\,\, \|g_i \chi_{\{|f| < \frac{1}{N}\}\cup\{|f| > N\}}\|_{\varphi,w} \leq \frac{\epsilon^2}{1 + \epsilon}.
		\end{equation}
		Moreover, by the convexity of $\varphi$, we see that
		\[
		\rho_{\varphi, w}\left(\frac{g_i \chi_{\{|f| < \frac{1}{N}\}\cup\{|f| > N\}}}{\epsilon}\right) \leq \frac{\epsilon}{1 + \epsilon}.
		\]
		
		Now, let $\tilde{g_i} = \frac{1}{1 + \epsilon}g_i \chi_{\{\frac{1}{N}\leq |f| \leq N\}}$. Since $\rho_{\varphi,w}(g_i) \leq 1$, we have
		\begin{eqnarray*}
			\rho_{\varphi, w}\left(\frac{g_i - \tilde{g}_i}{\epsilon}\right) &=& \rho_{\varphi,w} \left(\frac{g_i\chi_{\{|f| < \frac{1}{N}\}\cup\{|f| > N\}} + \frac{\epsilon}{1 + \epsilon}g_i\chi_{\{\frac{1}{N}\leq |f| \leq N\}}}{\epsilon}\right)\\
			&\leq& \rho_{\varphi, w}\left(\frac{g_i\chi_{\{|f| < \frac{1}{N}\}\cup\{|f| > N\}}}{\epsilon}\right) + \frac{1}{1 + \epsilon} \rho_{\varphi,w}(g_i) \leq 1.
		\end{eqnarray*}
		This shows that $\|g_i - \tilde{g}_i\|_{\varphi,w} \leq \epsilon$. Moreover, since $\rho_{\varphi, w}(\tilde{g}_i) \leq \frac{1}{1 + \epsilon} < 1$, we choose $\delta > 0$ such that $\rho_{\varphi, w}(\tilde{g}_i) = 1 - \delta$.
		
		To finish the proof, let $h = f_N$. Notice that $\supp \tilde{g}_i \subset \supp h$. By the orthogonal subadditivity of $\rho_{\varphi, w}$ and  (\ref{eq:rhocontrol}), we have
		\begin{eqnarray*}
			\rho_{\varphi, w}(f + \tilde{g}_i - h) = \rho_{\varphi,w}(f- f_N + \tilde{g}_i) &\leq& \rho_{\varphi, w}(f - f_N) + \rho_{\varphi, w}(\tilde{g}_i)\\
			&\leq& \delta + 1 - \delta = 1.
		\end{eqnarray*}
		Hence we have $\|f + \tilde{g}_i - h\|_{\varphi,w} \leq 1$, and so
		\[
		\|f + g_i - h\|_{\varphi,w} \leq \|f + \tilde{g}_i - h\|_{\varphi,w} + \|g_i - \tilde{g}_i\|_{\varphi,w} \leq 1 + \epsilon. 
		\]  
		Therefore, by Lemma \ref{Werner}, we obtain the desired result.
	\end{proof}

We also have the sequence analogue of the Theorem \ref{th: MidealOL}.

\begin{Theorem}\label{th:midealOLseq}
	Let $\varphi$ be a nondegenerate Orlicz function and let $w$ be a weight sequence. Then the order-continuous subspace $(\lambda_{\varphi,w})_a$ is an M-ideal in $\lambda_{\varphi,w}$.
\end{Theorem}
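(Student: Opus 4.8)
The plan is to transcribe the proof of Theorem~\ref{th: MidealOL} to the sequence setting and verify the $3$-ball property of Lemma~\ref{Werner} for the pair $Y=(\lambda_{\varphi,w})_a\subseteq X=\lambda_{\varphi,w}$. So fix $x\in B_{\lambda_{\varphi,w}}$, $y_1,y_2,y_3\in B_{(\lambda_{\varphi,w})_a}$ and $\epsilon>0$; I must produce $h\in(\lambda_{\varphi,w})_a$ with $\|x+y_i-h\|_{\varphi,w}\le 1+\epsilon$ for $i=1,2,3$. First I would record the sequence counterparts of the opening observations of that proof. Since $\mathbb{N}$ carries an infinite measure, $W(\infty)=\sum_i w(i)=\infty$, and repeating the first paragraph of the proof of Theorem~\ref{th: MidealOL} gives $d_x(\lambda)<\infty$ for every $\lambda>0$; as $\varphi$ is nondegenerate and $x^{*}$ is decreasing, this forces $x^{*}(i)\to0$, so in particular $\|x\|_\infty=x^{*}(1)<\infty$ and, for each $n$, the set $\{i:\tfrac1n\le|x(i)|\le n\}$ is finite. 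Hence $x_n:=x\chi_{\{\frac1n\le|x|\le n\}}$ is finitely supported, so $x_n\in(\lambda_{\varphi,w})_b=(\lambda_{\varphi,w})_a$; some $x_N$ will be the required $h$.

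The only step that is not a mechanical translation is the convergence $\alpha_{\varphi,w}(x-x_n)\to0$: in Theorem~\ref{th: MidealOL} this rested on the rearrangement fact $(f-f_n)^{*}\to0$ a.e.\ followed by dominated convergence, and here I would argue directly. Writing $x-x_n=x\chi_{B_n}$ with $B_n=\{0<|x|<\tfrac1n\}\cup\{|x|>n\}$, one has $B_n\downarrow\varnothing$ (a coordinate lying in every $B_n$ would be simultaneously nonzero and zero), and in fact $B_n=\{0<|x|<\tfrac1n\}$ once $n>\|x\|_\infty$. Consequently $(x-x_n)^{*}(i)\to0$ for each $i$ (indeed $(x-x_n)^{*}(1)\le\tfrac1n$ for large $n$), while $(x-x_n)^{*}(i)\le x^{*}(i)$ and $\sum_i\varphi\big(x^{*}(i)\big)w(i)=\alpha_{\varphi,w}(x)\le1<\infty$. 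The dominated convergence theorem for sums then yields $\alpha_{\varphi,w}(x-x_n)\to0$. Fixing $\delta=\epsilon/(1+\epsilon)$, pick $N_1$ with $\alpha_{\varphi,w}(x-x_{N_1})<\delta$.

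The remainder is the sequence transcription of the rest of the proof of Theorem~\ref{th: MidealOL}. Since each $y_i$ is order-continuous and $|y_i\chi_{B_n}|\downarrow0$, we have $\|y_i\chi_{B_n}\|_{\varphi,w}\to0$, so by convexity of $\varphi$ we may choose $N_2$ with $\alpha_{\varphi,w}\big(\epsilon^{-1}y_i\chi_{B_{N_2}}\big)\le\epsilon/(1+\epsilon)$ for $i=1,2,3$. Put $N=\max(N_1,N_2)$, let $h=x_N\in(\lambda_{\varphi,w})_a$, and set
\[
\tilde y_i=\frac{1}{1+\epsilon}\,y_i\chi_{\{\frac1N\le|x|\le N\}\cup\{x=0\}}
\]
(retaining the coordinates on which $x$ vanishes inside $\tilde y_i$ is what makes $\|y_i-\tilde y_i\|_{\varphi,w}\le\epsilon$ work; equivalently, a harmless initial perturbation of $x$ on its zero coordinates reduces matters to the case $x>0$). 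Since $\mathbb{N}=B_N\sqcup\big(\{\tfrac1N\le|x|\le N\}\cup\{x=0\}\big)$ and $\supp(x-h)=B_N$, the supports of $x-h$ and $\tilde y_i$ are disjoint, so orthogonal subadditivity of $\alpha_{\varphi,w}$ and convexity give $\alpha_{\varphi,w}(x+\tilde y_i-h)\le\alpha_{\varphi,w}(x-h)+\alpha_{\varphi,w}(\tilde y_i)\le\delta+\tfrac{1}{1+\epsilon}=1$, whence $\|x+\tilde y_i-h\|_{\varphi,w}\le1$; and writing $y_i-\tilde y_i=y_i\chi_{B_N}+\tfrac{\epsilon}{1+\epsilon}\,y_i\chi_{\{\frac1N\le|x|\le N\}\cup\{x=0\}}$, the same disjointness plus convexity give $\alpha_{\varphi,w}\big(\epsilon^{-1}(y_i-\tilde y_i)\big)\le\tfrac{\epsilon}{1+\epsilon}+\tfrac{1}{1+\epsilon}=1$, i.e.\ $\|y_i-\tilde y_i\|_{\varphi,w}\le\epsilon$. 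Hence $\|x+y_i-h\|_{\varphi,w}\le\|x+\tilde y_i-h\|_{\varphi,w}+\|y_i-\tilde y_i\|_{\varphi,w}\le1+\epsilon$, and Lemma~\ref{Werner} concludes. The one genuine obstacle is the convergence $\alpha_{\varphi,w}(x-x_n)\to0$ of the second paragraph, together with the bookkeeping that keeps $h=x_N$ inside $(\lambda_{\varphi,w})_a$ — which is precisely why $x_N$ must be finitely supported and hence why $x^{*}(i)\to0$ is needed; everything else is routine.
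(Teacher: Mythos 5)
Your proof is correct: you verify the $3$-ball property of Lemma~\ref{Werner} for $(\lambda_{\varphi,w})_a\subset\lambda_{\varphi,w}$ via the orthogonal subadditivity of $\alpha_{\varphi,w}$ together with the $(1+\epsilon)^{-1}$-scaling trick, which is exactly the mechanism the paper uses. The implementation, however, is genuinely different. The paper first invokes $(\lambda_{\varphi,w})_a=(\lambda_{\varphi,w})_b$ to assume the three order-continuous elements are supported on $\{1,\dots,N_1\}$, scales them globally by $(1+\epsilon)^{-1}$ with no truncation relative to $x$, and takes $h$ to be the restriction of $x$ to an initial segment $\{1,\dots,N_2-1\}$ with $N_2>N_1$; the disjointness of $x-h$ (a tail of $x$) and $\tilde y_i$ (finitely supported) is then automatic. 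You instead transcribe the function-space argument of Theorem~\ref{th: MidealOL}: you truncate $x$ on its level sets $\{\frac1n\le|x|\le n\}$ — which obliges you to first check $d_x(\lambda)<\infty$ and $x^*(i)\to0$ so that $x_n$ is finitely supported and hence lies in $(\lambda_{\varphi,w})_a$ — and you restrict the $y_i$ to the complementary set. Both routes are valid; the paper's is shorter because coordinate truncation is available for sequences, while yours has the small merit of treating the zero set of $x$ explicitly (your $\chi_{\{x=0\}}$ term), a point that is needed for $\|y_i-\tilde y_i\|_{\varphi,w}\le\epsilon$ and is glossed over in the function-space proof, where $\{|f|<\frac1n\}$ does not shrink to a null set if $f$ vanishes on a set of positive measure.
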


	\begin{proof}	
		Let $a =(a(i)) \in B_{\Lambda_{\varphi,w}}$, $x_j = (x_j(i)) \in B_{(\Lambda_{\varphi,w})_a}$, where $j = 1,2,3$, and $\epsilon >0$ be arbitrary. From the fact that $(\lambda_{\varphi, w})_a = (\lambda_{\varphi, w})_b$, without loss of generality, assume that there exists $N_1 \in \mathbb{N}$ such that $x_j(i) = 0$ for $i > N_1$ for some $N_1 \in \mathbb{N}$. Now, we define $\bar{x_j} = (\bar{x_j}(i))$ where $\bar{x_j}(i) = \frac{x_j(i)}{1 + \epsilon}$, $i \in \mathbb{N}$. Then by the convexity of $\varphi$, we have
		\[
		\alpha_{\varphi,w}\left(\frac{x_j-\bar{x_j}}{\epsilon}\right) = \alpha_{\varphi,w}\left(\frac{x_j}{1 + \epsilon}\right) \leq  \frac{1}{1 + \epsilon}\alpha_{\varphi,w}(x_j) < 1,
		\]
		and so 
		\[
		\|x_j - \bar{x_j}\|_{\varphi,w} \leq \epsilon.
		\] 
		Moreover, we have 
		\[
		\alpha_{\varphi,w}(\bar{x_j}) = \alpha_{\varphi,w}\left(\frac{x_j}{1 + \epsilon}\right) \leq \frac{1}{1 + \epsilon}\alpha_{\varphi,w}(x_j) < 1.
		\]
		Hence there exists $\delta > 0$ such that $\alpha_{\varphi,w}(\bar{x_j}) = 1 - \delta$.
		
		Now, let $a_k = a \chi_{\{1,2,\dots,k\}}$. Since $d_a(\lambda) < \infty$ for every $\lambda > 0$, we have $(a - a_k)^* \rightarrow 0$ because $|a - a_k| \rightarrow 0$ as $k\to\infty$. Since $\|a\|_{\varphi,w} \le 1$, $\alpha_{\varphi,w}(a) \le 1$.  Hence for the $\delta > 0$, we can choose $N_2 > N_1$ such that 
		
		\begin{equation}\label{eq:alphacontrol}
			\alpha_{\varphi,w}(a\chi_{\{N_2, N_2 + 1, \dots\}}) = \sum_{i=1}^{\infty} \varphi((a\chi_{\{N_2, N_2 + 1, \dots\}})^*(i))w(i) < \delta
		\end{equation} 
		by the Lebesgue dominated convergence theorem. Define $y = (y(i))_{i=1}^{\infty}$ where $y(i) = a(i)$ for $i < N_2$ and $y(i) = 0$ otherwise. 
		
		By the orthogonal subadditivity of $\alpha_{\varphi,w}$ and (\ref{eq:alphacontrol}), we obtain
		\begin{eqnarray*}
			\alpha_{\varphi,w}(a + \bar{x_j} - y) &=& \alpha_{\varphi,w}\left(a\chi_{\{N_2, N_2 + 1, \dots\}} + \frac{x_j}{1 + \epsilon}\chi_{\{1,2,\dots,N_1\}}\right)\\
			&\leq& \alpha_{\varphi,w}(a\chi_{\{N_2, N_2 + 1, \dots\}}) + \alpha_{\varphi, w}\left(\frac{x_j}{1 + \epsilon}\chi_{\{1,2,\dots,N_1\}}\right)\\
			&=& \alpha_{\varphi,w}(a\chi_{\{N_2, N_2 + 1, \dots\}}) + \alpha_{\varphi,w}(\bar{x_j})\\ 
			&<& 1 - \delta + \delta = 1. 
		\end{eqnarray*}

		Hence $\|a + \bar{x_j} - y\|_{\varphi,w} \leq 1$. From the fact that $\|x_j - \bar{x_j}\|_{\varphi,w} \leq \epsilon$, we get $\|a  + x_j - y\|_{\varphi,w} \leq \|a + \bar{x_j} - y\|_{\varphi,w} + \|x_j - \bar{x_j}\|_{\varphi,w} \leq 1 +\epsilon$. Therefore, by Lemma \ref{Werner}, we obtain the desired result.
\end{proof}

A closed linear subspace $Y$ of the dual space $X^*$ of a Banach space $X$ is said to be {\it norming} if $\|f\|_X = \sup\{|F(f)| : F \in Y,\, \|F\|_{X^*} \leq 1\}$. It is well-known that the K\"othe dual space $X'$ is isometrically isomorphic to a closed norming subspace of $X^*$ if  the Banach function lattice $X$ has the Fatou property \cite[Proposition 1.b.18]{LT2}. The relationship between the SD2P and M-ideals is described as below. 

\begin{Lemma}\label{lem:mideal}\cite[Theorem 4.10]{ALN}
	Let $Y$ be a proper M-ideal of $X$, i.e. $X^* = Y^{\perp}\oplus_1 Z$, where $\oplus_1$ denotes the $L_1$-direct sum of $Y^{\perp}$ and a subspace $Z$ of $X^*$. If $Z$ is a norming subspace of $X^*$, then both $X$ and $Y$ have the SD2P.   
\end{Lemma}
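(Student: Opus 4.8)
The plan is to prove the SD2P for $X$ and for $Y$ by the same route: in each case reduce to showing that an arbitrary convex combination of slices of the unit ball has diameter at least $2-\eta$ for every $\eta>0$, and manufacture the two almost-antipodal points using the $n$-ball property of M-ideals (Lemma~\ref{Werner}, bootstrapped to $n$ balls, which is standard for M-ideals, cf.\ \cite{HWW}) together with the norming hypothesis.

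First I would set up the structural dictionary. Writing $X^{*}=Y^{\perp}\oplus_{1}Z$ with $L$-projection $P$ onto $Y^{\perp}$, every $x^{*}\in X^{*}$ splits as $x^{*}=f+g$ with $f=Px^{*}\in Y^{\perp}$, $g\in Z$ and $\|x^{*}\|=\|f\|+\|g\|$, and restriction identifies $Z$ isometrically with $Y^{*}$. Dually one gets $\|x\|_{X}=\max\{\operatorname{dist}(x,Y),\ \sup_{g\in B_{Z}}|g(x)|\}$, so the hypothesis that $Z$ is norming says exactly that $\|x\|_{X}=\sup_{g\in B_{Z}}|g(x)|$ for every $x$, i.e.\ the canonical maps give isometric inclusions $Y\subseteq X\subseteq Y^{**}$. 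Two elementary facts I would record and use repeatedly are: (a) for $u,v\in X$ one has $\max\{\|u+v\|,\|u-v\|\}=\sup_{g\in B_{Z}}(|g(u)|+|g(v)|)$, so the requirement ``$\|p_{k}\pm y\|\le 1+\varepsilon$ for all $k$'' is the same as the ``$Z$-disjointness'' estimate $\sup_{g\in B_{Z}}(|g(p_{k})|+|g(y)|)\le 1+\varepsilon$; and (b) properness means $Y^{\perp}\ne\{0\}$, equivalently $X/Y\ne\{0\}$, so there exist norm-one $x_{0}\in X$ with $\operatorname{dist}(x_{0},Y)$ as close to $1$ as desired (lift a norm-one element of $X/Y$).

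Now fix a convex combination $C=\sum_{k=1}^{n}\lambda_{k}S(x_{k}^{*},\alpha)$ of slices of $B_{X}$ and $\varepsilon>0$, and split the indices according to whether the component $f_{k}=Px_{k}^{*}$ is small (so $\|g_{k}\|>1-\alpha$ and $S(x_{k}^{*},\alpha)$ meets $B_{Y}$) or large (so the slice ``points towards $Y^{\perp}$''). For the first group I would pick $p_{k}\in S(x_{k}^{*},\alpha)\cap B_{Y}$ deep in the slice and use that $Y$, being a proper M-ideal, has abundant ``disjoint room'': the $n$-ball property applied to the $p_{k}\in B_{Y}$ and the almost-orthogonal vector $x_{0}$ produces $y\in B_{Y}$ with $\|y\|$ close to $1$ that is $Z$-disjoint from all these $p_{k}$; because $g_{k}(p_{k})\approx\|g_{k}\|$ and $x_{k}^{*}|_{Y}=g_{k}|_{Y}$, the $Z$-disjointness forces $|x_{k}^{*}(y)|$ to be small, so $u_{k}:=\tfrac{1}{1+\varepsilon}(p_{k}+y)$ and $v_{k}:=\tfrac{1}{1+\varepsilon}(p_{k}-y)$ lie in $B_{X}$ and still in $S(x_{k}^{*},\alpha)$. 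For the second group I would instead move the corresponding slice points by an almost-norm-two vector living on the $Y^{\perp}$-side of $X$, using $x_{0}$ and elements of $Y^{\perp}$ and the norming hypothesis to keep the perturbed points in $B_{X}$ and in the slices. The net effect is a pair $u_{k},v_{k}\in S(x_{k}^{*},\alpha)$ for each $k$ with a common displacement $u_{k}-v_{k}$ of norm at least $2-O(\varepsilon)$; then $\sum_{k}\lambda_{k}u_{k},\sum_{k}\lambda_{k}v_{k}\in C$ are at distance at least $2-O(\varepsilon)$, and letting $\varepsilon\to0$ gives the SD2P of $X$. For $Y$ the argument is the same, now with $Z\cong Y^{*}$ (automatically norming for $Y$), but the norming hypothesis on $X$ is still used, since the shift must be produced inside $Y$.

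The step I expect to be the real obstacle is reconciling the two groups: a slice that points towards $Y^{\perp}$ cannot be perturbed by an element of $Y$, whereas a slice meeting $B_{Y}$ is perturbed inside $Y$, and one must arrange the \emph{same} displacement vector (or at least ones with a common, norm-close-to-two average) across both types, for all $n$ slices simultaneously, while respecting both the unit-ball constraint and the slice constraints. This is precisely where the $\ell_{1}$-orthogonality $\|f\|+\|g\|=1$, the $n$-ball property, the vector $x_{0}$, and the norming hypothesis have to be combined with care; in particular the second group forces work on the $Y^{\perp}$-side of $X^{*}$, where the norming hypothesis is indispensable — it cannot be dropped, as $\ell_{\infty}^{2}$ is a proper M-ideal in $\ell_{\infty}^{3}$ with non-norming complementary summand yet fails the SD2P. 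Once the construction of the displacement vectors is in place, the remaining work — absorbing the factors $1/(1+\varepsilon)$ and the depth of the chosen $p_{k}$, and passing from two-point separation to convex combinations of slices — is routine.
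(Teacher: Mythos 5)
The paper does not prove this lemma: it is quoted verbatim from \cite[Theorem 4.10]{ALN}, so there is no in-paper argument to compare against. Judged on its own, your proposal is a plan rather than a proof, and it has a genuine gap exactly where you flag one. The setup is correct and these are indeed the right tools: the $L$-decomposition $X^*=Y^{\perp}\oplus_1 Z$, the identification $Z\cong Y^*$, the reformulation of norming as $\|x\|=\sup_{g\in B_Z}|g(x)|$, the identity $\max\{\|u+v\|,\|u-v\|\}=\sup_{g\in B_Z}(|g(u)|+|g(v)|)$, the $n$-ball property, and the choice of $x_0$ with $\operatorname{dist}(x_0,Y)\approx 1$. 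For slices whose functionals $x_k^*=f_k+g_k$ have $\|f_k\|$ small your argument can be completed, with one correction: applying the $2n$-ball property to $\pm p_k$ and $x_0$ yields $y\in Y$ such that the displacement is $z=x_0-y$, which is \emph{not} an element of $B_Y$ as you wrote; one then gets $\|p_k\pm z\|\le 1+\epsilon$, $\|z\|\ge\operatorname{dist}(x_0,Y)$, and $|g_k(z)|$ small, so the perturbed points stay in those slices.

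The unresolved step is your ``second group.'' If $\|f_k\|\ge\alpha$, the slice $S(x_k^*,\alpha)$ is disjoint from $B_Y$ (since $\sup_{B_Y}x_k^*=\|g_k\|=1-\|f_k\|$), so its points $q_k$ cannot serve as the $B_Y$-inputs of the $n$-ball property, and neither $q_k$ nor $z$ lies in $Y$; none of the tools you list controls $\|q_k\pm z\|$. Moreover $x_k^*(z)=f_k(x_0)+g_k(x_0-y)$ is uncontrolled --- $f_k(x_0)$ can be as negative as $-\|f_k\|$ --- so there is no reason the perturbed points remain in the slice. Your closing remark that once the displacement vectors are constructed ``the remaining work is routine'' concedes rather than proves the theorem: producing a single displacement compatible with all $n$ slices, including those pointing into $Y^{\perp}$, is the entire content of the result. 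A similar issue affects your treatment of $Y$: the $n$-ball property only produces shifts of the form $x_0-y'\notin Y$, so the claim that the same argument yields a shift ``inside $Y$'' is unsupported (one would instead work in $B_Y\subset B_X\subset B_{Y^{**}}$ and use Goldstine against finitely many functionals of $Z$, including one nearly norming the displacement). Your $\ell_\infty^3$ example correctly shows the norming hypothesis cannot be dropped, but it does not fill the gap; as the lemma is an external citation, the honest options are to close this construction in detail or to keep the reference to \cite{ALN}.
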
 

In our case  $Z=(\Lambda_{\varphi,w})'$ or $Z=(\lambda_{\varphi,w})'$  is a norming subspace of $(\Lambda_{\varphi,w})^*$ or $(\lambda_{\varphi,w})^*$, respectively. Now we are ready to provide a sufficient condition for Orlicz-Lorentz spaces with the SD2P. It has been shown in \cite{KLT} that the same sufficient condition on N-functions provides the Orlicz-Lorentz spaces with the D2P. Our result not only covers a broader class of Orlicz-Lorentz spaces than the previous one, but also shows that not satisfying the appropriate $\Delta_2$-condition is still sufficient to identify the Orlicz-Lorentz spaces with the SD2P, which is stronger than the D2P.

\begin{Theorem}\label{th: OLSD2P}
	\begin{enumerate}[\rm(i)]
		\item Let $\varphi$ be a nondegenerate Orlicz function and let $w$ be a weight function. If $\varphi$ does not satisfy the appropriate $\Delta_2$-condition, then $(\Lambda_{\varphi,w})_a$ and $\Lambda_{\varphi,w}$ has the SD2P. Hence, the spaces $(\Lambda_{\varphi,w})_a$ and $\Lambda_{\varphi, w}$ have the D2P and the LD2P.
		\item Let $\varphi$ be a nondegenerate Orlicz function and let $w$ be a weight sequence. If $\varphi$ does not satisfy the $\Delta_2^0$-condition, then $(\lambda_{\varphi,w})_a$ and $\lambda_{\varphi,w}$ has the SD2P. Hence, the spaces $(\lambda_{\varphi,w})_a$ and $\lambda_{\varphi, w}$ have the D2P and the LD2P.
	\end{enumerate}
\end{Theorem}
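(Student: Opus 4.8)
The plan is to derive the SD2P directly from the M-ideal structure obtained in Theorems~\ref{th: MidealOL} and~\ref{th:midealOLseq}, by feeding it into Lemma~\ref{lem:mideal}. Recall that Lemma~\ref{lem:mideal} requires two ingredients: that the M-ideal $Y$ in $X$ be \emph{proper}, so that the $L_1$-decomposition $X^{*}=Y^{\perp}\oplus_1 Z$ is nontrivial, and that the complementary summand $Z$ be a norming subspace of $X^{*}$. Granting these, the lemma delivers the SD2P for both $X$ and $Y$ simultaneously, and then the chain
\[
\text{SD2P}\implies\text{D2P}\implies\text{LD2P}
\]
(Bourgain's lemma, together with the fact that every slice is relatively weakly open) yields the remaining assertions. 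So the whole argument reduces to checking properness and the norming property in the present setting.

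For part (i): by Theorem~\ref{th: MidealOL}, since $\varphi$ is nondegenerate, $(\Lambda_{\varphi,w})_a$ is an M-ideal in $\Lambda_{\varphi,w}$. To see it is \emph{proper}, recall that $\Lambda_{\varphi,w}$ is order-continuous if and only if $\varphi$ satisfies the appropriate $\Delta_2$-condition \cite[Theorem~2.4]{K}; under our hypothesis $\varphi$ fails this condition, so $(\Lambda_{\varphi,w})_a$ is a proper closed subspace of $\Lambda_{\varphi,w}$. Next, since $\Lambda_{\varphi,w}$ has the Fatou property and $(\Lambda_{\varphi,w})_a=(\Lambda_{\varphi,w})_b$, the K\"othe dual $(\Lambda_{\varphi,w})'$ is isometrically isomorphic to $\big((\Lambda_{\varphi,w})_a\big)^{*}$, and the $L_1$-decomposition of $(\Lambda_{\varphi,w})^{*}$ coming from the M-ideal structure is precisely the splitting of a functional into its regular and singular parts, namely $(\Lambda_{\varphi,w})^{*}=(\Lambda_{\varphi,w})_a^{\perp}\oplus_1 (\Lambda_{\varphi,w})'$. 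As observed just before the theorem, $(\Lambda_{\varphi,w})'$ is a norming subspace of $(\Lambda_{\varphi,w})^{*}$ because $\Lambda_{\varphi,w}$ has the Fatou property \cite[Proposition~1.b.18]{LT2}. Applying Lemma~\ref{lem:mideal} with $X=\Lambda_{\varphi,w}$, $Y=(\Lambda_{\varphi,w})_a$ and $Z=(\Lambda_{\varphi,w})'$ gives the SD2P for both $(\Lambda_{\varphi,w})_a$ and $\Lambda_{\varphi,w}$, and hence the D2P and the LD2P.

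Part (ii) is carried out verbatim in the sequence setting: Theorem~\ref{th:midealOLseq} provides the M-ideal $(\lambda_{\varphi,w})_a$ in $\lambda_{\varphi,w}$; this M-ideal is proper because $\lambda_{\varphi,w}$ is order-continuous if and only if $\varphi$ satisfies the $\Delta_2^0$-condition \cite{K}, which fails by hypothesis; and since $(\lambda_{\varphi,w})_a=(\lambda_{\varphi,w})_b$ and $\lambda_{\varphi,w}$ has the Fatou property, one gets $(\lambda_{\varphi,w})^{*}=(\lambda_{\varphi,w})_a^{\perp}\oplus_1(\lambda_{\varphi,w})'$ with $(\lambda_{\varphi,w})'$ norming, so Lemma~\ref{lem:mideal} again applies. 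The only point in the whole proof that is more than bookkeeping is exactly this verification of properness, i.e.\ equating the failure of the appropriate $\Delta_2$-condition with the failure of order-continuity of $\Lambda_{\varphi,w}$ (resp.\ $\lambda_{\varphi,w}$) and confirming that the complementary $L_1$-summand is legitimately the norming K\"othe dual in this generality; the identities $(\Lambda_{\varphi,w})_a=(\Lambda_{\varphi,w})_b$ and $(\lambda_{\varphi,w})_a=(\lambda_{\varphi,w})_b$ recorded in Section~2 are precisely what makes the latter identification valid.
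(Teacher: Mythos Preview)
Your proof is correct and follows exactly the paper's approach: invoke Theorem~\ref{th: MidealOL} (resp.\ Theorem~\ref{th:midealOLseq}) to get the M-ideal, note it is proper because the failure of the appropriate $\Delta_2$-condition means $(\Lambda_{\varphi,w})_a\neq\Lambda_{\varphi,w}$, and apply Lemma~\ref{lem:mideal} with $Z$ the (norming, by Fatou) K\"othe dual. Your write-up is in fact more explicit than the paper's, which compresses all of this into two sentences.
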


\begin{proof}
	We only prove (i) because (ii) can be obtained by the same argument. If an Orlicz function does not satisfy the appropriate $\Delta_2$-condition, the order-continuous subspace $(\Lambda_{\varphi,w})_a$ is a proper M-ideal in $ \Lambda_{\varphi,w}$ by Theorem \ref{th: MidealOL}. Then, both $(\Lambda_{\varphi,w})_a$ and $\Lambda_{\varphi,w}$ have the SD2P in view of Lemma \ref{lem:mideal}.
\end{proof}

 We summarize what we have shown so far. First, we recall the definitions of various octahedralities that are known to be the dual concepts to the D2Ps.

\begin{Definition}\cite{G, HLP} Let $X$ be a Banach space. \label{def:oct}
	\begin{enumerate}[\rm(i)]
		\item The norm on $X$ is octahedral if, for every finite-dimensional subspace $E \subset X$ and every $\epsilon > 0$, there exists $y \in S_X$ such that
		\[
		\|x + y\| \geq (1- \epsilon)(\|x\| + \|y\|) \,\,\, \text{for all} \,\,\, x \in E.
		\]
		\item The norm on $X$ is weakly octahedral if, for every finite-dimensional subspace $E \subset X$, every $x^* \in X^*$ and every $\epsilon > 0$, there exists $y \in S_X$ such that
		\[
		\|x + y\| \geq (1- \epsilon)(x^*x + \|y\|) \,\,\, \text{for all} \,\,\, x \in E.
		\] 
		\item The norm on $X$ is locally octahedral if, for every $x \in X$ and every $\epsilon >0$, there exists $y \in S_X$ such that
		\[
		\|sx + y\| \geq (1- \epsilon)(|s|\|x\| + \|y\|) \,\,\, \text{for all} \,\,\, s \in \mathbb{R}.
		\]
	\end{enumerate}
\end{Definition}

 Based on what we have observed in this section, we can now characterize the D2Ps of a class of Orlicz-Lorentz spaces as well as the octahedralities of their K\"othe duals.
 
\begin{Theorem}\label{th:equivD2P}
	Let $\varphi$ be a nondegenerate N-function at infinity and $w$ be a weight function. Then the following statements are equivalent for Orlicz-Lorentz spaces $\Lambda_{\varphi, w}$.
	\begin{enumerate}[\rm(i)]
		\item The function $\varphi$ does not satisfy the appropriate $\Delta_2$-condition.
		\item $\Lambda_{\varphi, w}$ has the SD2P.
		\item $\Lambda_{\varphi, w}$ has the D2P.
		\item $\Lambda_{\varphi, w}$ has the LD2P.
		\item The order-continuous subspace $(\Lambda_{\varphi, w})_a$ has the SD2P.
		\item The order-continuous subspace $(\Lambda_{\varphi, w})_a$ has the D2P.
		\item The order-continuous subspace $(\Lambda_{\varphi, w})_a$ has the LD2P.
		\item $\mathcal{M}_{\varphi_*, w}^0$ is octahedral.
		\item $\mathcal{M}_{\varphi_*, w}^0$ is weakly octahedral.
		\item $\mathcal{M}_{\varphi_*, w}^0$ is locally octahedral.
	\end{enumerate}  
	The similar facts also hold for the Orlicz-Lorentz sequence space $\lambda_{\varphi, w}$, its order-continuous subspace $(\lambda_{\varphi,w})_a$, and the space $\mathfrak{m}_{\varphi_*,w}^0$, defined by an Orlicz function $\varphi$ and a weight sequence $w$.
\end{Theorem}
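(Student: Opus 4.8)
The plan is to run a chain of equivalences anchored by the Radon-Nikod\'ym property: first treat $\Lambda_{\varphi,w}$, then its order-continuous subspace, and finally transfer everything to the K\"othe dual via the octahedrality--D2P dualities.

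For $\Lambda_{\varphi,w}$, the implication (i)$\Rightarrow$(ii) is Theorem \ref{th: OLSD2P}(i) (note that $\varphi$ is nondegenerate), and (ii)$\Rightarrow$(iii)$\Rightarrow$(iv) are the general facts recalled in the introduction: the SD2P implies the D2P by Bourgain's lemma, and the D2P implies the LD2P since slices are relatively weakly open. I would close the loop with (iv)$\Rightarrow$(i) by contraposition: if $\varphi$ satisfies the appropriate $\Delta_2$-condition then, $\varphi$ being an N-function at infinity, both hypotheses of Theorem \ref{th:RNPsuff} hold, so $\Lambda_{\varphi,w}$ has the RNP; hence $B_{\Lambda_{\varphi,w}}$ has slices of arbitrarily small diameter and the LD2P fails. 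The order-continuous subspace is handled identically: (i)$\Rightarrow$(v) is again Theorem \ref{th: OLSD2P}(i), (v)$\Rightarrow$(vi)$\Rightarrow$(vii) are general, and for (vii)$\Rightarrow$(i) one observes that if $\varphi$ has the appropriate $\Delta_2$-condition then $\Lambda_{\varphi,w}$ is order-continuous by \cite[Theorem 2.4]{K}, so $(\Lambda_{\varphi,w})_a=\Lambda_{\varphi,w}$ has the RNP and fails the LD2P. This makes (i)--(vii) mutually equivalent.

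To include (viii)--(x) I would use that $\mathcal{M}_{\varphi_*,w}^0$ is isometrically $((\Lambda_{\varphi,w})_a)^*$: since $\Lambda_{\varphi,w}$ has the Fatou property and $(\Lambda_{\varphi,w})_a=(\Lambda_{\varphi,w})_b$ (Section 2), \cite[Corollary 1.4.2]{BS} together with Theorem \ref{th:KLR} gives $((\Lambda_{\varphi,w})_a)^*=(\Lambda_{\varphi,w})'=\mathcal{M}_{\varphi_*,w}^0$. Applying, with $Y=(\Lambda_{\varphi,w})_a$, the known dualities --- $Y^*$ octahedral iff $Y$ has the SD2P, $Y^*$ weakly octahedral iff $Y$ has the D2P, and $Y^*$ locally octahedral iff $Y$ has the LD2P --- yields (viii)$\Leftrightarrow$(v), (ix)$\Leftrightarrow$(vi), (x)$\Leftrightarrow$(vii), so all ten statements are equivalent. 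The sequence case is obtained by the same argument with $\lambda_{\varphi,w}$, $(\lambda_{\varphi,w})_a$, $\mathfrak{m}_{\varphi_*,w}^0$, Theorem \ref{th: OLSD2P}(ii), the sequence RNP characterization (RNP iff $\Delta_2^0$), the corresponding order-continuity result, and the sequence version of the K\"othe-dual identification.

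The step I expect to require the most care is the last block: the octahedrality--D2P dualities must be applied to the \emph{predual} $(\Lambda_{\varphi,w})_a$, which is always order-continuous and hence genuinely has $\mathcal{M}_{\varphi_*,w}^0$ as its dual; running such a duality on $\Lambda_{\varphi,w}$ itself would fail precisely in the non-$\Delta_2$ case, where $(\Lambda_{\varphi,w})^*$ strictly contains $\mathcal{M}_{\varphi_*,w}^0$. One must also match the three octahedralities with the three D2Ps of the predual in the right order; the remainder is bookkeeping over results already established.
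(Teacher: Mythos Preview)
Your proposal is correct and follows essentially the same route as the paper's own proof: Theorem~\ref{th: OLSD2P} for (i)$\Rightarrow$(ii),(v), the standard chain among the D2Ps for (ii)$\Rightarrow$(iii)$\Rightarrow$(iv) and (v)$\Rightarrow$(vi)$\Rightarrow$(vii), Theorem~\ref{th:RNPsuff} by contraposition to close (iv),(vii)$\Rightarrow$(i), and the \cite{HLP} octahedrality--D2P dualities applied to the predual $(\Lambda_{\varphi,w})_a$ (with $((\Lambda_{\varphi,w})_a)^*\simeq\mathcal{M}_{\varphi_*,w}^0$) to link (v)--(vii) with (viii)--(x). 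Your explicit caution that the duality must be run on $(\Lambda_{\varphi,w})_a$ rather than on $\Lambda_{\varphi,w}$ is precisely the point the paper uses implicitly.
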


\begin{proof}
	Here we only prove the function space case because the sequence space case uses the same argument. The implications (ii) $\implies$ (iii) $\implies$ (iv) , (v) $\implies$ (vi) $\implies$ (vii), and (viii) $\implies$ (ix) $\implies$ (x) are well-known facts \cite{ALN, HLP}. We have (i) $\implies$ (ii) and (i) $\implies$ (v) from Theorem \ref{th: OLSD2P}. Now, suppose that $\varphi$ is an N-function at infinity and satisfies the appropriate $\Delta_2$-condition. Then the Orlicz-Lorentz space $\Lambda_{\varphi,w} = (\Lambda_{\varphi,w})_a$ has the Radon-Nikod\'ym property by Theorem \ref{th:RNPsuff}. Hence we see that (iv) $\implies$ (i) and (vii) $\implies$ (i). By the duality result between D2Ps and octahedralities, the space $(\Lambda_{\varphi,w})_a$ has the SD2P (resp. D2P, LD2P) if and only if $(\Lambda_{\varphi,w})_a^* \simeq (\Lambda_{\varphi,w})_a' = \mathcal{M}_{\varphi_*,w}^0$ is octahedral (resp. weakly octahedral, locally octahedral) \cite{HLP}. 
\end{proof}

As a special case, we can obtain the similar characterization for Orlicz spaces $L_{\varphi}([0, \gamma), m)$, where $\gamma \leq \infty$ and $m$ is the Lebesgue measure. However, it can be shown that the same argument also holds over nonatomic, $\sigma$-finite measure spaces \cite{KLT}. Thus, we provide the more general statement for Orlicz spaces instead.  

\begin{Corollary}\label{th:equivD2POrlicz}
		Let $\varphi$ be a nondegenerate N-function at infinity and $\mu$ be a nonatomic $\sigma$-finite measure. Then the following statements are equivalent for Orlicz spaces $L_\varphi = L_{\varphi}(\Omega, \Sigma, \mu)$:
		\begin{enumerate}[\rm(i)]
			\item The function $\varphi$ does not satisfy the appropriate $\Delta_2$-condition.
			\item $L_\varphi$ has the SD2P.
			\item $L_\varphi$ has the D2P.
			\item $L_\varphi$ has the LD2P.
			\item The order-continuous subspace $(L_\varphi)_a$ has the SD2P.
			\item The order-continuous subspace $(L_\varphi)_a$ has the D2P.
			\item The order-continuous subspace $(L_\varphi)_a$ has the LD2P.
			\item $L_{\varphi_*}^0$ is octahedral.
			\item $L_{\varphi_*}^0$ is weakly octahedral.
			\item $L_{\varphi_*}^0$ is locally octahedral.
		\end{enumerate}  
		The analogous statements  also hold for  Orlicz sequence space $\ell_\varphi$ defined by an Orlicz function $\varphi$. 
\end{Corollary}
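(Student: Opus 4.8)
The plan is to deduce this corollary from Theorem \ref{th:equivD2P} together with the standard observations that make the Orlicz-space case both a special case and a minor generalization of the Orlicz-Lorentz case. First I would note that when $w\equiv 1$ the modular $\rho_{\varphi,w}$ reduces to the usual Orlicz modular $\int_I\varphi(f^*)$, and since the Orlicz modular is rearrangement invariant this equals $\int_I\varphi(|f|)$; hence $\Lambda_{\varphi,1}=L_\varphi$ with equal norms, and likewise $\mathcal{M}_{\varphi_*,1}^0=L_{\varphi_*}^0$ because the modulars $P_{\varphi_*,w}$ and $Q_{\varphi_*,w}$ collapse (for $w\equiv1$ the only decreasing $v\prec w$ that matters is $v\equiv1$, and the level function construction is trivial). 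With these identifications and $W(t)=t$, so that $\lim_{t\to0+}t/W(t)=1\neq0$, condition (i) of Theorem \ref{th:RNPsuff} forces $\varphi$ to be an N-function at infinity exactly as assumed here; thus Theorem \ref{th:equivD2P} applies verbatim on $I=[0,\gamma)$ with Lebesgue measure and yields the equivalence of (i)--(x) in that setting.

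The only genuine additional content is the passage from $([0,\gamma),m)$ to an arbitrary nonatomic $\sigma$-finite measure space $(\Omega,\Sigma,\mu)$. Here I would invoke the classical fact that any nonatomic $\sigma$-finite measure space is measure-isomorphic (as a measure algebra, after normalizing) to an interval $[0,\gamma)$ with Lebesgue measure, $\gamma=\mu(\Omega)\le\infty$; since the Orlicz space $L_\varphi$ is rearrangement invariant, this measure isomorphism induces an isometric order isomorphism $L_\varphi(\Omega,\Sigma,\mu)\cong L_\varphi([0,\gamma),m)$ (and dually $L_{\varphi_*}^0(\Omega,\Sigma,\mu)\cong L_{\varphi_*}^0([0,\gamma),m)$). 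All the properties in the list---SD2P, D2P, LD2P of the space and of its order-continuous part, and octahedrality, weak octahedrality, local octahedrality of the Köthe dual---are invariant under isometric isomorphism, and the order-continuous subspace is preserved by an isometric order isomorphism; moreover the failure of the appropriate $\Delta_2$-condition is a property of $\varphi$ alone and does not refer to the measure space (the "appropriate" condition being $\Delta_2^\infty$ when $\mu(\Omega)<\infty$ and $\Delta_2$ when $\mu(\Omega)=\infty$, matching the interval case). Hence the equivalence transports directly.

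For the sequence-space statement I would argue identically: $\lambda_{\varphi,1}=\ell_\varphi$ and $\mathfrak{m}_{\varphi_*,1}^0=\ell_{\varphi_*}^0$ with $w\equiv1$, so the sequence part of Theorem \ref{th:equivD2P} specializes to the desired equivalences for $\ell_\varphi$, with the relevant growth condition being $\Delta_2^0$ as already encoded in the "appropriate" $\Delta_2$-condition for sequence spaces. I would remark that in the sequence case no N-function-at-infinity hypothesis is needed because $\ell_\varphi$ is built from an Orlicz function on a neighborhood of $0$ only, which is consistent with the statement of Theorem \ref{th:equivD2P}'s sequence clause and with Theorem \ref{th:RNPsuff}'s sequence analogue (where only $\Delta_2^0$ appears).

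I do not expect a serious obstacle; the one point requiring a little care is confirming that the identifications $\Lambda_{\varphi,1}=L_\varphi$ and especially $\mathcal{M}_{\varphi_*,1}^0=L_{\varphi_*}^0$ are isometric (not merely isomorphic)---this follows because for $w\equiv1$ the infimum defining $P_{\varphi_*,w}$ is attained at $v\equiv1$ by Jensen's inequality (the same computation as in Lemma \ref{lem:charM} applied with $W(t)=t$), so $P_{\varphi_*,1}=Q_{\varphi_*,1}$ equals the ordinary complementary Orlicz modular and the Amemiya/Orlicz norm it generates is the classical Orlicz norm on $L_{\varphi_*}$. Everything else is bookkeeping. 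I would close by noting that the reference \cite{KLT} already records the fact that the arguments survive the passage to a general nonatomic $\sigma$-finite measure, which is why the corollary is stated in that generality rather than only for intervals.
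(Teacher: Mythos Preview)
Your proposal is correct and follows essentially the same approach as the paper: the paper provides no formal proof for this corollary, merely remarking that it is the special case $w\equiv 1$ of Theorem \ref{th:equivD2P} and that the passage to general nonatomic $\sigma$-finite measure spaces is covered by \cite{KLT}. You supply the details the paper omits (the identifications $\Lambda_{\varphi,1}=L_\varphi$, $\mathcal{M}_{\varphi_*,1}^0=L_{\varphi_*}^0$, and the transport via measure isomorphism), which is entirely in the spirit of the paper's terse justification; the only cosmetic difference is that the paper appeals directly to the Orlicz-space arguments of \cite{KLT} rather than to a measure-algebra isomorphism, but both routes are standard and equivalent here.
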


\section{The Daugavet property and the diametral D2Ps of $\Lambda_{\varphi,w}$}

In this section, we study the diametral D2Ps and the Daugavet property of Orlicz-Lorentz function space generated by a finite Orlicz function and equipped with the Luxemburg norm. A rearrangement invariant Banach function space defined on a finite interval with the Daugavet property must be isometric to $L_1$ \cite[Corollary 4.9]{KMMW}. However, it is unknown whether the same statement holds for an infinite measure space case.  The problem has been solved for Orlicz spaces. As a matter of fact, it has been shown that the space $L_1$ is the only Orlicz function space equipped with the Luxemburg norm that has the Daugavet property \cite[Theorem 4.12]{KLT}. We will use the relationship between $\Delta$-points and locally uniformly nonsquare points to identify Orlicz-Lorentz function spaces without the diametral D2Ps or the Daugavet property. 
\subsection{Locally uniformly nonsquare points are not $\Delta$-points}
A point $x\in S_X$ is said to be a {\it locally uniformly nonsquare point} (or {\it uniformly non-$\ell_1^2$ point}) if there exists $\delta > 0$ such that $\min\{\|x + y\|, \|x - y\|\} \leq 2 - \delta$ for all $y \in S_X$. Here the locally uniformly nonsquare points refer to those in the sense of James \cite{J, WSL}. A Banach space $X$ is {\it locally uniformly nonsquare (LUNSQ)} if every points on the unit sphere is a locally uniformly nonsquare point. It is well-known that uniformly nonsquare and locally uniformly rotund Banach spaces are LUNSQ. 

On the other hand, a Banach space $X$ is locally octahedral if every point on the unit sphere $S_X$ is not a locally uniformly nonsquare point. As a matter of fact, a Banach space with the Daugavet property or the DLD2P is locally octahedral. This comes from the fact that a Banach space $X$ having the DLD2P is equivalent to its dual $X^*$ having the $w^*$-DLD2P \cite[Theorem 3.5]{ALNT} and the duality relationship between the $w^*$-LD2P and the local octahedrality \cite{HLP}. Here we will show an alternative approach in terms of $\Delta$-points. The absence of locally uniformly nonsquare points has played an important role to identify Orlicz and Musielak-Orlicz spaces with the Daugavet property \cite{KK, KLT}, and we will use the similar approach for Orlicz-Lorentz spaces. First, we have the following lemma that will be useful.

\begin{Lemma}\label{lem:delta}\cite[Lemma 2.1]{JR}
	Let $X$ be a Banach space and $x\in S_X$ be a $\Delta$-point. For every $\epsilon > 0$, $\frac{\alpha}{1 - \alpha} < \epsilon$, and every slice $S = S(x^*, \alpha)$ containing $x$, there exists a slice $S(z^*, \alpha_1)$ of $B_X$ such that $S(z^*, \alpha_1) \subset S(x^*, \alpha)$ and $\|x - y\| > 2 - \epsilon$ for all $y \in S(z^*, \alpha_1)$. 
\end{Lemma}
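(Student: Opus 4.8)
The plan is to produce the required subslice by hand: extract one far point $y_0$ from a \emph{strictly smaller} slice $S(x^{*},\beta)$ with $\beta<\alpha$, pick a norming functional $v^{*}$ of $x-y_0$, and use the slice associated with the perturbed functional $z_0^{*}:=x^{*}-(1-\alpha)v^{*}$. (Note that the hypothesis $\tfrac{\alpha}{1-\alpha}<\epsilon$ forces $\alpha<1$, which is all we need.)

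First I would rephrase the $\Delta$-point hypothesis: since $x\in\overline{conv}\Delta_{\eta}(x)$, Hahn--Banach separation shows that every slice $S(y^{*},\beta)$ that contains $x$ meets $\Delta_{\eta}(x)$, for every $\eta>0$. As $x\in S(x^{*},\alpha)$ means $x^{*}x>1-\alpha$, we may fix $\beta$ with $1-x^{*}x<\beta<\alpha$ and obtain $y_0\in\Delta_{\eta}(x)$ with $x^{*}y_0>1-\beta$; thus $\|x-y_0\|\ge 2-\eta$, $y_0\in S(x^{*},\alpha)$, and the surplus $s_0:=\alpha-\beta>0$ is available. Choosing $v^{*}\in S_{X^{*}}$ with $v^{*}(x-y_0)=\|x-y_0\|$ and using $\|x\|=\|v^{*}\|=1$, $\|y_0\|\le 1$, we get $v^{*}x\ge 1-\eta$ and $v^{*}y_0\le -1+\eta$.

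Now put $z_0^{*}:=x^{*}-(1-\alpha)v^{*}$, $c:=2(1-\alpha)$, and $T:=\{w\in B_{X}:z_0^{*}w>c\}$. I would check three things. (a) $T$ is a bona fide slice: $z_0^{*}(y_0)\ge(1-\beta)+(1-\alpha)(1-\eta)>c$ as soon as $(1-\alpha)\eta<s_0$, whence $\|z_0^{*}\|>c$ and $T=S(z^{*},\alpha_1)$ with $z^{*}:=z_0^{*}/\|z_0^{*}\|\in S_{X^{*}}$, $\alpha_1:=1-c/\|z_0^{*}\|\in(0,1)$. (b) $T\subseteq S(x^{*},\alpha)$: for $w\in T$, using $v^{*}w\ge-1$, $x^{*}w>c+(1-\alpha)v^{*}w\ge c-(1-\alpha)=1-\alpha$. (c) $T\subseteq\Delta_{\epsilon}(x)$: for $w\in T$, using $x^{*}w\le 1$, $(1-\alpha)v^{*}w<1-c=2\alpha-1$, hence $v^{*}w<\frac{2\alpha-1}{1-\alpha}$, and therefore
\[
\|x-w\|\ \ge\ v^{*}x-v^{*}w\ >\ (1-\eta)-\frac{2\alpha-1}{1-\alpha}\ =\ 2-\eta-\frac{\alpha}{1-\alpha}.
\]
Since $\frac{\alpha}{1-\alpha}<\epsilon$, it suffices to take $\eta>0$ with $\eta<\min\bigl\{\epsilon-\tfrac{\alpha}{1-\alpha},\ \tfrac{s_0}{1-\alpha}\bigr\}$; then $S(z^{*},\alpha_1)\subseteq S(x^{*},\alpha)$ and $\|x-y\|>2-\epsilon$ for every $y\in S(z^{*},\alpha_1)$, as required.

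The crux — and where the numerical hypothesis enters — is the conflict between (b) and (c): the natural slice contained in $\Delta_{\epsilon}(x)$, namely a slice of $-v^{*}$, has no reason to lie inside $S(x^{*},\alpha)$, and in general the intersection of two slices need not contain a slice; the coefficient $1-\alpha$ in $z_0^{*}$ is tuned precisely so that $\{z_0^{*}>2(1-\alpha)\}$ is squeezed into both constraints, the loss being exactly $\frac{\alpha}{1-\alpha}$. The second delicate point is nonemptiness of $T$: it requires $x^{*}y_0$ to exceed $1-\alpha$ by a definite amount, which is why $y_0$ is harvested from the smaller slice $S(x^{*},\beta)$ — this is the step that genuinely uses that $x$ is a $\Delta$-point, rather than merely the weaker fact that $\Delta_{\eta}(x)\cap S(x^{*},\alpha)\neq\varnothing$.
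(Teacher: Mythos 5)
Your argument is correct: the separation step correctly converts the $\Delta$-point hypothesis into ``every slice containing $x$ meets $\Delta_\eta(x)$,'' the harvesting of $y_0$ from the strictly smaller slice $S(x^*,\beta)$ is exactly what makes the new slice nonempty, and the arithmetic in (a)--(c) for $z_0^*=x^*-(1-\alpha)v^*$ with threshold $2(1-\alpha)$ checks out, yielding the loss $\eta+\frac{\alpha}{1-\alpha}<\epsilon$. The paper itself gives no proof of this lemma --- it is quoted from \cite[Lemma 2.1]{JR} --- and your construction is essentially a faithful reconstruction of the argument given there (a convex perturbation of $x^*$ by a functional norming $x-y_0$, with the coefficient $1-\alpha$ tuned to fit inside both $S(x^*,\alpha)$ and $\Delta_\epsilon(x)$), so nothing further is needed.
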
 

We now present one of the main ingredients in this section.
\begin{Theorem}\label{prop:nod}
	A locally uniformly nonsquare point $x \in S_X$ is not a $\Delta$-point of $X$. 
\end{Theorem}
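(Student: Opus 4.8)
The plan is to argue by contradiction: suppose $x \in S_X$ is a locally uniformly nonsquare point, so there is $\delta > 0$ with $\min\{\|x+y\|,\|x-y\|\} \le 2-\delta$ for all $y \in S_X$, and suppose simultaneously that $x$ is a $\Delta$-point. The key observation is that since $x \in S_X$, by Hahn--Banach there is a norming functional $x^* \in S_{X^*}$ with $x^*x = 1$, so $x$ lies in \emph{every} slice $S(x^*,\alpha)$. I would then apply Lemma \ref{lem:delta} with a suitably small $\epsilon$ (to be pinned down below) and with a slice $S(x^*,\alpha)$ where $\alpha$ is chosen so small that $\frac{\alpha}{1-\alpha} < \epsilon$: this produces a nonempty slice $S(z^*,\alpha_1) \subseteq S(x^*,\alpha)$ on which \emph{every} point $y$ satisfies $\|x - y\| > 2 - \epsilon$.

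Next I would exploit the geometry of that slice. Pick any $y_0 \in S(z^*,\alpha_1)$. Since $S(z^*,\alpha_1)$ is a slice of $B_X$, for small $\alpha_1$ it ``points in the direction'' of $z^*$; concretely, $z^*y_0 > 1 - \alpha_1$, and I want to produce a second point in the slice of the form (a normalization of) $-y_0$ plus something, or more simply: I would like to find $y_1 \in S(z^*,\alpha_1)$ close to a unit vector $u$ with both $\|x - u\|$ close to $2$ \emph{and} $\|x + u\|$ close to $2$. The cleanest route: from $\|x - y_0\| > 2 - \epsilon$ and $\|y_0\| \le 1$, one gets $\|x\| > 1 - \epsilon$ trivially, but more useful is that $\frac{x - y_0}{\|x-y_0\|}$ has norm one; set $u = \frac{x-y_0}{\|x-y_0\|}$. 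Then $\|x - y_0\| > 2-\epsilon$ forces both $x$ and $-y_0$ to be nearly aligned with $u$: indeed $\|u - x\| \le \big|\,\|x-y_0\| - 1\big|\cdot\frac{1}{\|x-y_0\|}\cdot\|x-y_0\| + \dots$ — more carefully, $\|x - u\| = \big\| x - \frac{x-y_0}{\|x-y_0\|}\big\| \le \|x\|\big|1 - \frac{1}{\|x-y_0\|}\big| + \frac{\|y_0\|}{\|x-y_0\|} \le (1 - \tfrac{1}{2-\epsilon}) + \tfrac{1}{2-\epsilon} \cdot\text{(small)}$, which is $O(\epsilon)$, and symmetrically $\|{-u} - y_0\| = O(\epsilon)$, i.e. $\|u + y_0\| = O(\epsilon)$ so $y_0$ is within $O(\epsilon)$ of $-u$.

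Now feed $u$ into the nonsquareness hypothesis: $\min\{\|x+u\|,\|x-u\|\} \le 2 - \delta$. We have $\|x - u\| \ge \|x - y_0\| - \|y_0 - u\| > (2-\epsilon) - O(\epsilon) = 2 - O(\epsilon)$. For $\|x+u\|$: note $\|x + u\| \ge |z^*(x+u)|$ is not immediately large, so instead I use a \emph{second} application of Lemma \ref{lem:delta} or, better, a direct argument inside the same slice --- since $y_0$ is $O(\epsilon)$-close to $-u$, we get $\|x + u\| \ge \|x - y_0\| - \|y_0 + u\| > 2 - O(\epsilon)$ as well. Hence $\min\{\|x+u\|,\|x-u\|\} > 2 - O(\epsilon)$. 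Choosing $\epsilon$ small enough at the outset that the $O(\epsilon)$ error is $< \delta$ yields $\min\{\|x+u\|,\|x-u\|\} > 2-\delta$ with $u \in S_X$, contradicting local uniform nonsquareness. I expect the main obstacle to be the bookkeeping that controls $\|y_0 + u\|$ and $\|x - u\|$ simultaneously by $O(\epsilon)$: the inequality $\|x - y_0\| > 2-\epsilon$ with $\|x\|,\|y_0\| \le 1$ does pin down $\|x - y_0\|$, $\|x\|$, $\|y_0\|$ all within $\epsilon$ of their extreme values, and then the triangle-inequality estimates for the normalization $u$ go through, but making the constants explicit (so that a clean choice like $\epsilon < \delta/6$ works) is the step requiring care.
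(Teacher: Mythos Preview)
Your proposal has a genuine gap in the step where you analyze the unit vector $u = \dfrac{x - y_0}{\|x-y_0\|}$. The claims that $\|x - u\| = O(\epsilon)$ and $\|y_0 + u\| = O(\epsilon)$ do \emph{not} follow from $\|x - y_0\| > 2 - \epsilon$ and $\|x\|, \|y_0\| \le 1$. In your triangle-inequality bound
\[
\|x - u\| \le \|x\|\,\Bigl|1 - \tfrac{1}{\|x-y_0\|}\Bigr| + \frac{\|y_0\|}{\|x-y_0\|},
\]
the second term is approximately $\tfrac{1}{2}$, not ``small'' --- $\|y_0\|$ is close to $1$ since $y_0$ lies in a slice. (Concretely, in $\ell_\infty^2$ take $x = (1,0)$, $y_0 = (-1,1)$: then $\|x - y_0\| = 2$, $u = (1,-\tfrac12)$, and $\|x - u\| = \|y_0 + u\| = \tfrac12$, so $\min\{\|x+u\|,\|x-u\|\} = \tfrac12$.) Consequently neither of your final lower bounds on $\|x \pm u\|$ is justified, and the contradiction with nonsquareness does not materialize.

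The idea you are missing is much simpler than the normalization trick you attempted. You already chose the outer slice $S(x^*,\alpha)$ with $x^*x = 1$, and Lemma \ref{lem:delta} gives $S(z^*,\alpha_1) \subset S(x^*,\alpha)$. Pick any $y \in S(z^*,\alpha_1)$ and normalize it to $y' = y/\|y\|$; since $z^*y' \ge z^*y > 1 - \alpha_1$, the point $y'$ stays in $S(z^*,\alpha_1)$, so $\|x - y'\| > 2 - \epsilon$. For the \emph{other} sign you simply evaluate $x^*$: both $x$ and $y'$ lie in $S(x^*,\alpha)$, hence
\[
\|x + y'\| \ge x^*x + x^*y' > 1 + (1 - \alpha) > 2 - \epsilon,
\]
once $\alpha$ is chosen small enough. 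This directly contradicts local uniform nonsquareness with $y' \in S_X$. In short: the functional $x^*$ (not $z^*$, and not any normalization of $x - y_0$) is what forces $\|x + y'\|$ to be near $2$; you noted that $z^*$ does not help but overlooked that $x^*$ does.
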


\begin{proof}
		We show that a $\Delta$-point $x \in S_X$ cannot be a locally uniformly nonsquare point. Let $\epsilon > 0$ and $\eta \in (0, \frac{\epsilon}{2})$. By Lemma \ref{lem:delta}, for every $\alpha > 0$ where $\frac{\alpha}{1 - \alpha} < \eta$ and every slice $S = S(x^*, \alpha)$ containing $x$, there exists a slice $S(z^*, \alpha_1) \subset S$ such that $\|x - y\| > 2 - \eta > 2 - \epsilon$ for all $y \in S(z^*, \alpha_1)$. In particular, we have $\frac{z^*y}{\|y\|} > \frac{1-\alpha_1}{\|y\|} \geq 1 - \alpha_1$ for every $y \in S(z^*, \alpha_1)$. Hence $y' = \frac{y}{\|y\|} \in S(z^*, \alpha_1)$ and $\|x - y'\| > 2 - \epsilon$ for $y \in S(z^*, \alpha_1)$.   
		
		Moreover, by the fact that $\alpha < \frac{\alpha}{1 - \alpha} < \eta$ and $x, y \in S$, we have $\|x + y'\| \geq x^*x + x^*y' > 2 - 2\alpha > 2 - 2\eta > 2 - \epsilon$. Thus, for every $\epsilon > 0$ there exists $y' \in S_X$ such that $\min\{\|x + y'\|, \|x - y'\|\} > 2 - \epsilon$. This shows that $x \in S_X$ is not a locally uniformly nonsquare point.
\end{proof}

Recently, it has been shown that uniformly nonsquare Banach spaces do not contain $\Delta$-points \cite{ALMP}. Notice that we obtain a broader class of Banach spaces without $\Delta$-points by the previous theorem.

\begin{Corollary}\label{cor:nodelta}
	If a Banach space $X$ is LUNSQ, then $X$ does not have $\Delta$-points.
\end{Corollary}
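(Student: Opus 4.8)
The plan is to derive Corollary \ref{cor:nodelta} as an immediate consequence of Theorem \ref{prop:nod}, since the corollary is really just a restatement of the theorem across all points of the unit sphere. First I would recall the definitions at play: a Banach space $X$ is LUNSQ precisely when every point $x \in S_X$ is a locally uniformly nonsquare point, i.e.\ for each such $x$ there is $\delta > 0$ with $\min\{\|x+y\|, \|x-y\|\} \leq 2 - \delta$ for all $y \in S_X$; and $X$ has a $\Delta$-point if there exists some $x \in S_X$ with $x \in \overline{conv}\,\Delta_\epsilon(x)$ for every $\epsilon > 0$.

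The argument then goes by contraposition. Suppose $X$ has a $\Delta$-point, say $x \in S_X$. By Theorem \ref{prop:nod}, every locally uniformly nonsquare point fails to be a $\Delta$-point; equivalently, a $\Delta$-point is never a locally uniformly nonsquare point. Hence this particular $x$ is not a locally uniformly nonsquare point, which means $X$ is not LUNSQ by definition. Taking the contrapositive: if $X$ is LUNSQ, then $X$ has no $\Delta$-points.

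There is essentially no obstacle here — the content is entirely carried by Theorem \ref{prop:nod}, whose proof uses Lemma \ref{lem:delta} (the slice-refinement property of $\Delta$-points from \cite{JR}) together with the elementary observation that points in a small slice containing $x$ can be renormalized onto $S_X$ while keeping both $\|x - y'\|$ and $\|x + y'\|$ close to $2$. The only thing worth stating carefully in the write-up is the logical equivalence "$x$ is a $\Delta$-point $\implies$ $x$ is not LUNSQ-point" versus its contrapositive, so that the quantifier over $S_X$ in the definition of a LUNSQ space is handled correctly. I would also remark, as the paper does in the sentence following the corollary, that this recovers and strictly enlarges the class of spaces without $\Delta$-points obtained in \cite{ALMP} for uniformly nonsquare spaces, and includes locally uniformly rotund spaces as a further special case.
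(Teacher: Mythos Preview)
Your proposal is correct and matches the paper's approach exactly: the corollary is stated in the paper without proof, as an immediate consequence of Theorem~\ref{prop:nod}, and your contrapositive argument is precisely the intended one-line justification. There is nothing to add or correct.
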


We also have another approach to the well-known result.

\begin{Corollary}\cite[Theorem 4.5]{ALNT}\label{prop:LD2PLOH}
	If a Banach space $X$ has the DLD2P, then $X$ is locally octahedral.
\end{Corollary}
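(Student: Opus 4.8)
The statement to prove is Corollary~\ref{prop:LD2PLOH}: if $X$ has the DLD2P, then $X$ is locally octahedral. The plan is to obtain this as a direct consequence of Theorem~\ref{prop:nod} together with the characterization of the DLD2P via $\Delta$-points. First I would recall that, as noted in the excerpt right after the definition of $\Delta$-points and Daugavet points, a Banach space $X$ has the DLD2P precisely when every point of the unit sphere $S_X$ is a $\Delta$-point \cite{AHLP, IK}. Combining this with Theorem~\ref{prop:nod}, which says that a locally uniformly nonsquare point can never be a $\Delta$-point, we get that if $X$ has the DLD2P then $X$ has no locally uniformly nonsquare points at all; equivalently, for every $x \in S_X$ and every $\delta > 0$ there exists $y \in S_X$ with $\min\{\|x+y\|, \|x-y\|\} > 2 - \delta$.

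The remaining task is to translate ``no locally uniformly nonsquare points'' into the local octahedrality inequality of Definition~\ref{def:oct}(iii), namely that for every $x \in X$ and $\epsilon > 0$ there is $y \in S_X$ with $\|sx + y\| \geq (1-\epsilon)(|s|\|x\| + \|y\|)$ for all $s \in \mathbb{R}$. By homogeneity it suffices to treat $x \in S_X$. Given $\epsilon > 0$, pick $\delta$ small (say $\delta < \epsilon$) and, by the above, choose $y \in S_X$ with $\|x + y\| > 2 - \delta$ and $\|x - y\| > 2 - \delta$. Then for $s \geq 0$ one writes $sx + y$ as a convex-type combination: using $\|x+y\| > 2 - \delta$ one estimates $\|sx + y\|$ from below by interpolating between the cases $s$ large and $s$ small. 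Concretely, for $0 \le s \le 1$ we have $sx + y = s(x+y) + (1-s)y$, so $\|sx+y\| \ge s\|x+y\| - (1-s)\|y\| $ is the wrong direction; instead use that $\|sx+y\| \ge \|y\| - \text{(small)}$ fails too, so the clean route is the standard triangle-type computation $2\|sx + y\| \ge \|sx + y\| + \|sx + y\|$ combined with $\|(x+y) - (1-s)x\|$ and $\|(x-y)+(1+s)x\|$ type decompositions; I expect that writing $\|sx+y\| \ge \tfrac12(\|sx + y + (x - y)\| ... )$ — more simply, for $s \ge 0$, $\|sx+y\| \ge \|x+y\| - \|(1-s)x\| = \|x+y\| - |1-s| > 2 - \delta - |1-s|$ and also $\|sx+y\| \ge \|sx\| - \|y\| \vee \ldots$ — and then checking that the maximum of the available lower bounds beats $(1-\epsilon)(s+1)$ for all $s \ge 0$, and symmetrically using $\|x-y\| > 2-\delta$ for $s \le 0$. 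This is precisely the routine argument showing that ``$x$ is not locally uniformly nonsquare'' is equivalent to ``$x$ witnesses local octahedrality'', and it is essentially contained in the proof of Theorem~\ref{prop:nod} where the inequality $\min\{\|x+y'\|,\|x-y'\|\} > 2-\epsilon$ is produced.

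Alternatively, and more cleanly, I would invoke the equivalence between the negation of local uniform nonsquareness at a point and local octahedrality at that point directly: a point $x \in S_X$ fails to be locally uniformly nonsquare if and only if for every $\epsilon>0$ there is $y\in S_X$ with $\min\{\|x+y\|,\|x-y\|\}>2-\epsilon$, and a short computation (choosing this same $y$) shows $\|sx+y\| \ge (1-\epsilon)(|s| + 1)$ for all $s\in\mathbb R$ — one checks the two regimes $|s|\le 1$ and $|s|\ge 1$ separately, using $\|x\pm y\|>2-\epsilon$ in the first and the triangle inequality $\|sx+y\|\ge |s|\,\|x\pm y\| - (|s|-1)\|y\| \ge |s|(2-\epsilon) - (|s|-1)$ in the second, with the sign of $y$ chosen according to the sign of $s$. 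Summing over all $x\in S_X$ gives local octahedrality of $X$.

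\textbf{Main obstacle.} There is no serious obstacle here: once Theorem~\ref{prop:nod} and the $\Delta$-point characterization of the DLD2P are in hand, the corollary is immediate modulo the elementary two-regime estimate converting the nonsquareness failure into the octahedrality inequality. The only point requiring a little care is handling the parameter $s$ uniformly over all of $\mathbb{R}$ (rather than just $s = \pm 1$) and making sure the single witness $y$ works simultaneously for all $s$ — which it does, because for $s \ge 0$ one uses $\|x+y\|$ and for $s \le 0$ one uses $\|x-y\|$, and both are $> 2-\delta$ by construction.
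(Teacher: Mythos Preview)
Your proposal is correct and follows exactly the paper's route: DLD2P $\Rightarrow$ every unit-sphere point is a $\Delta$-point $\Rightarrow$ (by Theorem~\ref{prop:nod}) no locally uniformly nonsquare points $\Rightarrow$ $X$ is locally octahedral. The paper's proof is just these three lines, because the final implication is taken as a standing observation made a few paragraphs before Theorem~\ref{prop:nod} (``a Banach space $X$ is locally octahedral if every point on $S_X$ is not a locally uniformly nonsquare point''); you go further and actually supply the elementary two-regime estimate for that implication, which the paper does not spell out. Your ``alternative'' paragraph gives the clean version of this estimate and is correct (for $0\le s\le 1$ one gets $\|sx+y\|\ge \|x+y\|-|1-s|\ge 1+s-\epsilon\ge(1-\epsilon)(s+1)$, and for $s\ge 1$ one gets $\|sx+y\|\ge s\|x+y\|-(s-1)\ge s+1-s\epsilon\ge(1-\epsilon)(s+1)$, with the symmetric argument for $s<0$); the earlier meandering paragraph with abandoned inequalities should simply be deleted.
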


\begin{proof}
	If a Banach space $X$ has the DLD2P, then every point of the unit sphere $S_X$ is a $\Delta$-point. Then in view of Theorem \ref{prop:nod}, the space $X$ does not contain locally uniformly nonsquare points. Therefore, the space $X$ is locally octahedral. 
\end{proof}

\subsection{The Daugavet property and the diametral D2Ps in Orlicz-Lorentz spaces}
Now, we study the diametral D2Ps and the Daugavet property in Orlicz-Lorentz spaces using Theorem \ref{prop:nod}. First, we mention that the proof method in \cite{KLT} cannot be mimicked for Orlicz-Lorentz spaces. This hindrance comes from the fact the structure of Orlicz-Lorentz spaces also depends on various weight functions. 

The following lemma similar to \cite[Theorem 1.39.(4)]{Chen} plays an important role in this section.

\begin{Lemma}\label{lem:simple}
	Let $x \in (\Lambda_{\varphi,w})_a$. Then for every $\epsilon \in (0, 1)$, there exists $\delta \in (0, 1)$ such that $\rho_{\varphi,w}(x) \leq 1 - \epsilon$ implies $\|x\| \leq 1 - \delta$.
\end{Lemma}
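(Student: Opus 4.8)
The plan is to argue by contradiction using the convexity of $\varphi$ and the normalization $\rho_{\varphi,w}(x)\le 1-\epsilon$. Suppose the conclusion fails: then there is some $\epsilon\in(0,1)$ and a sequence $(x_n)\subset(\Lambda_{\varphi,w})_a$ with $\rho_{\varphi,w}(x_n)\le 1-\epsilon$ but $\|x_n\|_{\varphi,w}\to 1$. By definition of the Luxemburg norm, for each $n$ and each $\lambda_n<\|x_n\|_{\varphi,w}$ we have $\rho_{\varphi,w}(x_n/\lambda_n)>1$. The idea is to compare $\rho_{\varphi,w}(x_n)$ with $\rho_{\varphi,w}(x_n/\lambda_n)$ for a suitably chosen $\lambda_n$ slightly below $1$, and derive that $\rho_{\varphi,w}(x_n)$ cannot be bounded away from $1$ unless the modular of the "overshoot" part is controlled --- which will force a contradiction with $\rho_{\varphi,w}(x_n)\le 1-\epsilon$.

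Concretely, here are the steps in order. First I would reduce to a single function: it suffices to find, for fixed $\epsilon$, a uniform $\delta$, so I take $x$ with $\rho_{\varphi,w}(x)\le 1-\epsilon$ and try to bound $\|x\|_{\varphi,w}$. Second, I would split according to where $\varphi$ behaves linearly versus strictly convexly. Since $x\in(\Lambda_{\varphi,w})_a=(\Lambda_{\varphi,w})_b$, the function $x^*$ is controlled: it vanishes at infinity and $\rho_{\varphi,w}(kx)<\infty$ for some $k>1$; in particular the "large values" of $x^*$ contribute only a small amount to the modular after truncation, and this truncation can be made uniform in $x$ once $\rho_{\varphi,w}(x)\le 1-\epsilon$ is assumed --- this is where order-continuity is genuinely used. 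Third, on the truncated part, where $x^*$ takes values in a fixed bounded interval $[a,b]$ away from $0$ and $\infty$, I would use that $\varphi$ restricted to such an interval satisfies an effective $\Delta_2$-type estimate: there is a constant $c=c(\epsilon)>0$ and a factor $t_0=t_0(\epsilon)<1$ such that $\varphi(u/t_0)\le (1+c)\varphi(u)$ uniformly for $u$ in the relevant range (this follows from convexity and finiteness of $\varphi$ on a compact set, together with $\varphi(0)=0$, giving $\varphi(u/t_0)\le \varphi(u)/t_0$ crudely, but one needs the sharper comparison so that $(1+c)(1-\epsilon)\le 1$). Fourth, combining: for an appropriate $t_0<1$ depending only on $\epsilon$,
\[
\rho_{\varphi,w}\!\left(\frac{x}{t_0}\right)\le (1+c)\,\rho_{\varphi,w}(x)+\text{(small truncation error)}\le (1+c)(1-\epsilon)+\epsilon/2 \le 1,
\]
which gives $\|x\|_{\varphi,w}\le t_0=:1-\delta$.

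The main obstacle is the uniformity: both the truncation level (handling the values of $x^*$ near $0$ and near $\infty$, where $\varphi$ may degenerate linearly or blow up steeply) and the scaling factor $t_0$ must be chosen depending only on $\epsilon$, not on $x$. The near-infinity part is handled by order-continuity (equivalently $(\Lambda_{\varphi,w})_a=(\Lambda_{\varphi,w})_b$) together with the estimate $d_x(\lambda)<\infty$ for $\lambda>0$ shown in the proof of Theorem~\ref{th: MidealOL}; the near-zero part, where $\varphi$ might vanish ($a_\varphi>0$) or be linear, is where one must be careful, but there $\varphi$ is sublinear-to-linear and scaling by $1/t_0$ with $t_0$ close to $1$ increases the modular by at most a factor $1/t_0$, which is $\le 1+c$; one then absorbs everything into the single inequality $(1+c)(1-\epsilon)+\epsilon/2\le 1$, valid once $c$ is small enough, i.e. once $t_0$ is close enough to $1$. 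I would isolate the compact-interval comparison $\varphi(u/t_0)\le(1+c)\varphi(u)$ as the key quantitative lemma and verify it directly from convexity: writing $u/t_0 = u + (1/t_0-1)u$ and using $\varphi(u)\ge \varphi(b)\,u/b$ type lower bounds on $[a,b]$ gives the needed additive control.
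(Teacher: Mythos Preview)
You have misread the quantifiers. The lemma fixes $x\in(\Lambda_{\varphi,w})_a$ first, so $\delta$ is allowed to depend on $x$; the statement is pointwise and amounts to saying that for an order-continuous $x$, $\rho_{\varphi,w}(x)<1$ forces $\|x\|_{\varphi,w}<1$. Your plan aims instead at a $\delta=\delta(\epsilon)$ uniform in $x$, and that uniform version is simply false whenever $\varphi$ fails the appropriate $\Delta_2$-condition. On a finite interval with, say, $\varphi(u)=e^u-1$, choose $u_n\to\infty$ and $t_n$ with $\varphi(u_n)W(t_n)=1-\epsilon$; then $x_n=u_n\chi_{[0,t_n)}\in(\Lambda_{\varphi,w})_a$, $\rho_{\varphi,w}(x_n)=1-\epsilon$, while for any fixed $\lambda<1$ one has $\rho_{\varphi,w}(x_n/\lambda)=\varphi(u_n/\lambda)W(t_n)\to\infty$, so $\|x_n\|_{\varphi,w}\to1$. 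Thus neither your ``effective $\Delta_2$ on a compact range'' nor the claimed uniform-in-$x$ truncation near infinity can hold; both would have to depend on the particular $x$.

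The paper's argument is much shorter and uses only one consequence of order continuity, namely that $x\in(\Lambda_{\varphi,w})_a$ gives $\rho_{\varphi,w}(kx)<\infty$ for \emph{every} $k>0$ (not merely ``some $k>1$'' as you wrote). Assuming for contradiction that $\rho_{\varphi,w}(x)\le1-\epsilon$ but $\|x\|_{\varphi,w}=1$, take simple $x_n\uparrow|x|$ so that $\|x_n\|_{\varphi,w}\uparrow1$, set $a_n:=1/\|x_n\|_{\varphi,w}-1\downarrow0$ and $K:=\rho_{\varphi,w}(2x)<\infty$; convexity then yields
\[
1=\rho_{\varphi,w}\bigl((1+a_n)x_n\bigr)\le a_n\,\rho_{\varphi,w}(2x_n)+(1-a_n)\,\rho_{\varphi,w}(x_n)\le a_nK+(1-\epsilon)\longrightarrow 1-\epsilon,
\]
a contradiction. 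No truncation, no compact-interval estimate, no splitting near $0$ or near $\infty$ is needed. If you abandon the uniformity goal your scheme could be repaired (the range $[a,b]$ and the factor $t_0$ would then depend on $x$), but it is far more laborious than this one-line convexity bound.
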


\begin{proof}
	Assume to the contrary that for every $\epsilon \in (0, 1)$,  we have $\rho_{\varphi,w}(x) \leq 1 - \epsilon$ but $\|x\| = 1$. Consider an increasing sequence of simple functions $(x_n) \subset B_{(\Lambda_{\varphi,w})_a}$ such that $m(\supp{x_n}) < \infty$ and $|x_n|\uparrow |x|$ a.e. Then by \cite[Chapter 15, \S 72, Theorem 3]{Z} and by  theorem on the order-continuous subspace being a norm-closed order-ideal, we see that $\|x_n\|_{\varphi,w} \uparrow \|x\|_{\varphi,w} = 1$. Without loss of generality, we may assume that $\|x_n\|_{\varphi,w} \geq \frac{1}{2}$ for every $n \in \mathbb{N}$ so that $1 \geq \frac{1}{\|x_n\|_{\varphi,w}} - 1 = a_n \downarrow 0$ as $n \rightarrow \infty$.
	
	Now, let $K = \sup_n\{\rho_{\varphi,w}(2x_n)\}$. From the fact that $x \in (\Lambda_{\varphi,w})_a$, we have $\rho_{\varphi,w}(kx) <\infty$ for every $k>0$ \cite{K}. Hence by the monotonicity of $\rho_{\varphi,w}$, we have  $\rho_{\varphi,w}(2 x_n) \leq \rho_{\varphi,w}(2 x) < \infty$, which in turn implies that $K < \infty$. However, we also have
	\begin{eqnarray*}
		1 = \rho_{\varphi,w}\left(\frac{x_n}{\|x_n\|_{\varphi,w}}\right) = \rho_{\varphi,w}(2 a_n x_n + (1 - a_n)x_n) &\leq& a_n\rho_{\varphi,w}(2x_n) + (1 - a_n)\rho_{\varphi,w}(x_n)\\
		&\leq& K a_n + (1- \epsilon)\rightarrow 1 - \epsilon, 
	\end{eqnarray*}
	which leads to a contradiction. Therefore, there exists $\delta \in (0, 1)$ such that $\|x\|_{\varphi,w} \leq 1 - \delta$.
\end{proof}

The following fact on Orlicz functions will be useful.

\begin{Lemma}\cite[Lemma 4.1]{KLT}\label{lem:Orlicz}
Let $\varphi$ be an Orlicz function. For every closed and bounded interval $J \subset (d_{\varphi}, \infty)$ there exists a constant $\sigma \in (0, 1)$ such that $2\varphi(u/2)/\varphi(u) \leq \sigma$ for $u \in J$. 
\end{Lemma}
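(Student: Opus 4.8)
The plan is to reduce the required uniform estimate on the compact interval $J$ to a pointwise strict inequality combined with a routine continuity/compactness argument. Write $J = [a,b]$ with $d_\varphi < a \le b < \infty$ and set $g(u) = 2\varphi(u/2)/\varphi(u)$ for $u \in J$. First I would check that $g$ is well defined and continuous on $J$: since a function vanishing on $[0,a_\varphi)$ is linear there with slope $0$ we have $a_\varphi \le d_\varphi$, so every $u \in J \subset (d_\varphi,\infty) \subset (a_\varphi,\infty)$ has $\varphi(u) > 0$; and $\varphi$, being a finite convex function, is continuous on $(0,\infty) \supset J$. Convexity together with $\varphi(0) = 0$ gives $\varphi(u/2) = \varphi(\tfrac12 u + \tfrac12\cdot 0) \le \tfrac12\varphi(u)$, hence $g(u) \le 1$ on all of $J$. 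Thus it suffices to prove the \emph{strict} inequality $g(u) < 1$ for every $u > d_\varphi$; then $\sigma := \max_{u \in J} g(u)$ is attained by compactness and satisfies $0 < \sigma < 1$, which is the assertion.

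For the strict inequality I would argue by contradiction. Suppose $g(u_0) = 1$ for some $u_0 > d_\varphi$, i.e. $\varphi(u_0/2) = \tfrac12\varphi(u_0)$. Let $\ell(s) = (\varphi(u_0)/u_0)s$ be the chord of the graph of $\varphi$ over $[0,u_0]$; by convexity $\varphi \le \ell$ on $[0,u_0]$, and by assumption $\varphi(u_0/2) = \ell(u_0/2)$, so $\varphi$ meets the chord at the interior point $u_0/2$. The key (and completely standard) rigidity fact is that a convex function lying below a chord and agreeing with it at an interior point agrees with it on the whole interval: if $\varphi(s_1) < \ell(s_1)$ for some $s_1 \in (0,u_0)$, then applying convexity on the subinterval with endpoints $\{s_1,u_0\}$ when $s_1 < u_0/2$, or $\{0,s_1\}$ when $s_1 > u_0/2$, forces $\varphi(u_0/2) < \ell(u_0/2)$ — a contradiction. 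Hence $\varphi(s) = \ell(s) = k s$ with $k = \varphi(u_0)/u_0$ for all $s \in [0,u_0]$, and in particular for all $s \in [0,u_0)$. By the very definition of $d_\varphi$ this gives $d_\varphi \ge u_0$, contradicting $u_0 > d_\varphi$. Therefore $g(u) < 1$ for every $u > d_\varphi$, and the proof is complete.

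There is no genuinely hard step here; the argument is elementary. The only points needing a little care are the degenerate bookkeeping — verifying $\varphi > 0$ on $J$ so that $g$ makes sense, and invoking continuity of $\varphi$ on $(0,\infty)$ rather than on all of $[0,\infty)$ — and matching the conclusion ``$\varphi$ is linear on $[0,u_0]$'' \emph{exactly} against the definition of $d_\varphi$ (which asks for linearity on $[0,u)$), so that the contradiction is clean. I would isolate the convexity-rigidity step as a short self-contained sub-argument to keep the logic transparent.
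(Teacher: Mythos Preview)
The paper does not give a proof of this lemma; it is quoted verbatim from \cite[Lemma~4.1]{KLT} and used as a black box in the proof of Theorem~\ref{th:OLlunsq}. Your argument is correct and is precisely the natural one: the function $g(u)=2\varphi(u/2)/\varphi(u)$ is well defined and continuous on the compact interval $J\subset(d_\varphi,\infty)\subset(a_\varphi,\infty)$, convexity gives $g\le 1$, equality at some $u_0>d_\varphi$ would (via the chord-rigidity observation) force $\varphi$ to be linear on $[0,u_0)$ and hence $d_\varphi\ge u_0$, a contradiction, so $g<1$ pointwise and $\sigma:=\max_J g<1$ by compactness. There is nothing in the present paper to compare against, and nothing to correct in your write-up.
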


A Banach lattice $X$ is {\it locally uniformly monotone} if for every $x \in X_+$, with $\|x\| = 1$ and any $\epsilon \in (0, 1)$, there exists $\delta(x, \epsilon) \in (0,1)$ such that $0 \leq y \leq x$ and $\|y\| \geq \epsilon$ implies $\|x - y| \leq 1 - \delta(x, \epsilon)$. 
We mention that the Lorentz space $\Lambda_{1, w}(0, \gamma)$, $\gamma \leq \infty$, is always locally uniformly monotone from the assumptions on the weight function in this article \cite[Proposition 4.1]{FK}.

\begin{Theorem}\label{th:OLlunsq}
Let $w$ be a weight function and let $\varphi$ be an Orlicz function such that $d_\varphi < \infty$. Then there exists a locally uniformly nonsquare point in $(\Lambda_{\varphi,w})_a$.
\end{Theorem}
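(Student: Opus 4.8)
The plan is to exhibit an explicit candidate point and verify the locally uniformly nonsquare inequality directly by a modular computation. Since $d_\varphi<\infty$, I would fix any $u_0\in(d_\varphi,\infty)$ and consider a characteristic-type function of the form $x=c\,\chi_{(0,t_0)}$ for a suitable $t_0\in(0,\gamma)$ and normalizing constant $c>0$ chosen so that $\|x\|_{\varphi,w}=1$; by Theorem~\ref{th:fundM}-type reasoning applied to $\Lambda_{\varphi,w}$ (via its fundamental function $\phi_{\Lambda_{\varphi,w}}(t)=1/\varphi^{-1}(1/W(t))$), the value $c$ is determined by $\varphi(c)W(t_0)=1$, so $c=\varphi^{-1}(1/W(t_0))$. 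Choosing $t_0$ small enough forces $c$ to be as large as we like, in particular $c>u_0>d_\varphi$, so that the interval $J$ traced out by the relevant arguments of $\varphi$ sits inside $(d_\varphi,\infty)$ and Lemma~\ref{lem:Orlicz} becomes available with its constant $\sigma\in(0,1)$. This $x$ lives in $(\Lambda_{\varphi,w})_a$ because it is a bounded function with support of finite measure, hence a simple function of the required type.

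\medskip
\noindent\textbf{Key steps.} First I would establish the normalization $\|x\|_{\varphi,w}=1$ and record that $x^*=c\,\chi_{(0,t_0)}$ so $\rho_{\varphi,w}(x)=\varphi(c)W(t_0)=1$. Second, given an arbitrary $y\in S_{\Lambda_{\varphi,w}}$, I would estimate $\rho_{\varphi,w}\!\big(\tfrac{x+y}{2}\big)$ and $\rho_{\varphi,w}\!\big(\tfrac{x-y}{2}\big)$. The crucial observation is orthogonal subadditivity together with the pointwise bound: on the part of the measure space where $|x|$ dominates (essentially the set $(0,t_0)$ after rearrangement considerations), the convexity of $\varphi$ and Lemma~\ref{lem:Orlicz} give a genuine gain, namely $\varphi\!\big(\tfrac{|x|+|y|}{2}\big)\le \tfrac12\sigma'\big(\varphi(|x|)+\varphi(|y|)\big)$ type control for arguments landing in $J$; while on the complementary part the ordinary convexity estimate $\varphi\!\big(\tfrac{a+b}{2}\big)\le\tfrac12(\varphi(a)+\varphi(b))$ already suffices. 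Third, I would add the two modular estimates for $\tfrac{x+y}{2}$ and $\tfrac{x-y}{2}$: using $|x+y|+|x-y|=2\max(|x|,|y|)\le 2(|x|+|y|)$ pointwise and the refined inequality on the ``large'' set, one obtains
\[
\rho_{\varphi,w}\!\Big(\tfrac{x+y}{2}\Big)+\rho_{\varphi,w}\!\Big(\tfrac{x-y}{2}\Big)\le 1-\beta
\]
for some $\beta>0$ depending only on $\varphi$, $w$, and $t_0$ (not on $y$), the gain $\beta$ coming from the $\sigma$ in Lemma~\ref{lem:Orlicz} applied on a set of mass at least a fixed fraction of $W(t_0)$. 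Hence $\min\{\rho_{\varphi,w}(\tfrac{x+y}{2}),\rho_{\varphi,w}(\tfrac{x-y}{2})\}\le 1-\tfrac{\beta}{2}$, so one of $\tfrac{x+y}{2},\tfrac{x-y}{2}$ has modular at most $1-\tfrac{\beta}{2}$. Fourth, I invoke Lemma~\ref{lem:simple}: since $\tfrac{x\pm y}{2}\in(\Lambda_{\varphi,w})_a$ (both $x$ and $y$ can be taken in the order-continuous subspace, or one reduces to that case by density), a modular bounded away from $1$ forces the norm to be bounded away from $1$ by some $\delta\in(0,1)$ depending only on $\beta$. Therefore $\min\{\|x+y\|_{\varphi,w},\|x-y\|_{\varphi,w}\}\le 2(1-\delta)=2-2\delta$ uniformly over $y\in S_X$, which is exactly the statement that $x$ is a locally uniformly nonsquare point.

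\medskip
\noindent\textbf{Main obstacle.} The delicate point is the passage from the \emph{pointwise} refined inequality to a genuine uniform modular gain, because for a general $y\in S_X$ the function $y$ need not be supported on $(0,t_0)$ nor be comparable to $x$ there; the rearrangement $y^*$ and the interaction between $\supp y$ and $\supp x$ must be handled carefully. The resolution is that the gain only needs to be harvested on a set where $|x|$ is bounded below by a fixed constant in $J$ — namely $(0,t_0)$ — and there, regardless of $y$, either $|y|$ is comparable to $|x|$ on a large subset (and Lemma~\ref{lem:Orlicz} applies to $\tfrac{|x|+|y|}{2}$), or $|y|$ is small on a large subset of $(0,t_0)$ (and then $\varphi(\tfrac{|x|+|y|}{2})$ is already strictly less than $\tfrac12(\varphi(|x|)+\varphi(|y|))$ by strict convexity near $c$, or one exploits that $\|y\|=1$ forces $|y|$ to be large somewhere, feeding back a symmetric gain for $\tfrac{x-y}{2}$). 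A clean way to organize this is to split according to whether $\rho_{\varphi,w}(y\chi_{(0,t_0)})\ge \tfrac12$ or $<\tfrac12$ and treat the two regimes separately, using local uniform monotonicity of $\Lambda_{1,w}$ as a comparison tool in the borderline estimates; in either regime a fixed $\beta>0$ emerges.
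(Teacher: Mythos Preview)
There is a genuine gap in the third step. The displayed inequality
\[
\rho_{\varphi,w}\Big(\frac{x+y}{2}\Big)+\rho_{\varphi,w}\Big(\frac{x-y}{2}\Big)\le 1-\beta
\]
cannot hold as written (take $y=x$ to get left side $\ge \rho_{\varphi,w}(x)=1$); presumably you mean $2-\beta$. But even that corrected version does not follow from the pointwise identity $|x+y|+|x-y|=2\max(|x|,|y|)$ together with Lemma~\ref{lem:Orlicz}, because $\rho_{\varphi,w}(f)=\int\varphi(f^*)w=\|\varphi(|f|)\|_{1,w}$ involves the decreasing rearrangement: a pointwise bound $\varphi(|x+y|/2)+\varphi(|x-y|/2)\le\varphi(|x|)+\varphi(|y|)-g$ with $g\ge 0$ supported on $(0,t_0)$ gives no control on $\|\varphi(|x+y|/2)\|_{1,w}+\|\varphi(|x-y|/2)\|_{1,w}$, since the $\Lambda_{1,w}$-norm is subadditive, not superadditive. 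Your proposed split according to whether $\rho_{\varphi,w}(y\chi_{(0,t_0)})\ge\tfrac12$ does not repair this; for instance if $y=\pm c$ on large pieces of $(0,t_0)$ then neither ``$|y|$ comparable to $|x|$'' nor ``$|y|$ small'' yields a gain for the \emph{sum}, and the hand-wave to strict convexity near $c$ fails when $|y|$ sits exactly at $c$.

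The mechanism the paper uses, which your sketch does not capture, is to split the support of $x$ according to the \emph{sign} of $x(t)y(t)$: set $B_1=\{t:x(t)y(t)\ge 0\}$ and $B_2$ its complement within a fixed-mass piece of $\supp x$. On $B_1$ one has $|x-y|\le\max(|x|,|y|)$, so pointwise
\[
\varphi\Big(\frac{|x-y|}{2}\Big)\le \varphi\Big(\frac{a}{2}\Big)+\varphi\Big(\frac{|y|}{2}\Big)\le \frac12\varphi(|x|)-\frac{\sigma}{2}\varphi(|x|)\chi_{B_1}+\frac12\varphi(|y|),
\]
and one then bounds the \emph{single} quantity $\rho_{\varphi,w}((x-y)/2)=\|\varphi(|x-y|/2)\|_{1,w}$ by the triangle inequality in $\Lambda_{1,w}$ together with local uniform monotonicity of $\Lambda_{1,w}$ applied to the fixed element $\varphi(|x|)$; this is what produces a constant independent of $y$. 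The case $B_2$ is symmetric and handles $(x+y)/2$. So the argument never adds the two modulars; it shows directly that at least one of them is at most $1-\delta$ with $\delta$ coming from the LUM modulus of $\Lambda_{1,w}$ at the fixed point $\varphi(|x|)$. Your outline mentions LUM of $\Lambda_{1,w}$ only peripherally, but it is the heart of the matter.
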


\begin{proof}
	Let $\varphi$ be an Orlicz function such that $d_\varphi < b_{\varphi} = \infty$. Choose $a > d_\varphi$ and  $A \in \Sigma$ such that $W(mA) = \frac{1}{\varphi(a)}$. Then there exists a closed bounded interval $J \subset (d_\varphi, \infty)$ including $a$.  Define a function $x = a\chi_A$. Clearly $\rho_{\varphi,w}(x) = \varphi(a)W(mA)  = 1$. Hence,  $\|x\|_{\varphi,w} = 1$ and $x \in (\Lambda_{\varphi,w})_a$.
	
	Now, we claim that $x $ is a locally uniformly nonsquare point in $(\Lambda_{\varphi,w})_a$. Let $y \in S_{(\Lambda_{\varphi,w})_a}$.  Pick $\eta \in (0, 1)$ and $t_1 \in [0, mA)$ such that $\varphi(a)W(t_1) \geq \eta$. Then there exists a measurable subset $E_{t_1} \subset A$ such that $\int_0^{t_1}x^*(t)dt  = \int_{E_{t_1}} x(t)dt$ \cite[Property 7$^\circ$, pg 64]{KPS}. Since $a \in J \subset (d_{\varphi}, \infty)$, by Lemma \ref{lem:Orlicz}, there exists $\sigma \in (0, 1)$ such that $\varphi\left(\frac{1}{2}a\right) \leq \frac{1 - \sigma}{2}\varphi(a)$.
	
	Consider
	\[
	B_1 = \{t \in E_{t_1}: x(t)y(t) \geq 0\}\,\,\, \text{and} \,\,\, B_2 = E_{t_1} \setminus B_1.	
	\]
	
	Since $\rho_{\varphi, w}(x \chi_{E_{t_1}}) = \varphi(a) W(t_1) \geq \eta$, we have either $\rho_{\varphi,w}(x \chi_{B_1}) \geq \frac{\eta}{2}$ or $\rho_{\varphi,w}(x \chi_{B_2}) \geq \frac{\eta}{2}$. Assume that $\rho_{\varphi,w}(x \chi_{B_1}) \geq \frac{\eta}{2}$. Then for almost all $t \in B_1$, 
	\begin{align*}
	\varphi\left(\frac{|x(t) - y(t)|}{2}\right) 
	 &\leq \varphi\left(\frac{\max\{x(t), y(t)\}}{2}\right) \leq \varphi\left(\frac{a}{2}\right) + \varphi\left(\frac {y(t)}{2}\right)\\
	 &\leq \frac{1}{2}\varphi(x(t)) - \frac{\sigma}{2}\varphi(|x(t)|) + \frac{1}{2}\varphi|(y(t)|).
	\end{align*}  
Therefore for a.a. $t\in I$,
	\begin{align*}
	\varphi\left(\frac{|x(t)-y(t)|}{2}\right)& = 
	\varphi\left(\frac{|x(t) - y(t)|}{2}\chi_{B_1}(t) + \frac{|x(t) - y(t)|}{2}\chi_{(B_1)^c}(t)\right)\\
	&\leq \frac{1}{2}\varphi(x(t)) - \frac{\sigma}{2}\varphi(x(t))\chi_{B_1}(t) + \frac{1}{2}\varphi(|y(t)|),
	\end{align*}
and computing the norm in $\Lambda_{1,w}$ we get
	\begin{eqnarray*}
	\rho_{\varphi,w}\left(\frac{x-y}{2}\right) = \left\|\varphi\left(\frac{|x - y|}{2}\right)\right\|_{1,w} &\leq& \left\|\frac{1}{2}\varphi(x) - \frac{\sigma}{2}\varphi(x)\chi_{B_1} + \frac{1}{2}\varphi(|y|)\right\|_{1,w}\\
	&\leq& \frac{1}{2}\left\|\varphi(x) - \sigma\varphi(x)\chi_{B_1}\right\|_{1,w} + \frac{1}{2}\left\|\varphi(|y|)\right\|_{1,w}.
	\end{eqnarray*}
	
Now by the fact that $\|\sigma\varphi(x)\chi_{B_1}\|_{1,w} = \sigma \rho_{\varphi,w}(x\chi_{B_1}) \ge \frac{\eta\cdot \sigma}{2}$ and by the local uniform monotonicity of $\Lambda_{1,w}$ \cite[Proposition 4.1]{FK}, there exists a constant $\delta_1\left(\varphi(x), \frac{\eta\cdot \sigma}{2}\right) > 0$ such that 
\[
\rho_{\varphi,w}\left(\frac{x-y}{2}\right) \le  1 - \delta_1\left(\varphi(x), \frac{\eta\cdot \sigma}{2}\right).
\]

On the other hand, if $\rho_{\varphi,w}(x \chi_{B_2}) \geq \frac{\eta}{2}$,   using  the similar argument as above there exists $\delta_2\left(\varphi(x), \frac{\eta\cdot \sigma}{2}\right) > 0$ with
\[
\rho_{\varphi,w}\left(\frac{x + y}{2}\right) \leq 1 - \delta_2\left(\varphi(x), \frac{\eta\cdot \sigma}{2}\right).
\]
	Hence, for $x = a\chi_A \in S_{(\Lambda_{\varphi,w})_a}$, there exists $\delta = \min\{\delta_1\left(\varphi(x), \frac{\eta\cdot \sigma}{2}\right), \delta_2\left(\varphi(x), \frac{\eta\cdot \sigma}{2}\right)\}$ such that
	\[
	\min\left\{\rho_{\varphi,w}\left(\frac{x-y}{2}\right),\rho_{\varphi,w}\left(\frac{x+y}{2}\right)\right\} \leq 1 - \delta.
	\]
	In view of Lemma \ref{lem:simple},  there exists $\epsilon > 0$ such that $ \min\left\{\left\|\frac{x-y}{2}\right\|_{\varphi,w},\left\|\frac{x+y}{2}\right\|_{\varphi,w}\right\} \leq 1 - \epsilon$ for every $y \in S_{(\Lambda_{\varphi,w})_a}$. Therefore, $x = a \chi_A$ is a locally uniformly nonsquare point in $(\Lambda_{\varphi,w})_a$.
\end{proof}

Recall that a Banach space $X$ has the DLD2P if and only if every point on $S_X$ is a $\Delta$-point. Then we obtain the following consequence.

\begin{Theorem}\label{cor:OLnoDP}
	Let $\varphi$ be an Orlicz function and let $w$ be a weight function. If an Orlicz-Lorentz space $\Lambda_{\varphi,w}$ has the DLD2P,  or DD2P, or the Daugavet property,  then $\varphi$ is a linear function.
\end{Theorem}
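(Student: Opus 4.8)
The plan is to pass to the weakest of the three notions and to the language of $\Delta$-points. Since the Daugavet property implies the DD2P, which implies the DLD2P, and since $\Lambda_{\varphi,w}$ has the DLD2P exactly when every point of $S_{\Lambda_{\varphi,w}}$ is a $\Delta$-point, it is enough to prove the contrapositive: if $\varphi$ is not a linear function, then $S_{\Lambda_{\varphi,w}}$ contains a point that is not a $\Delta$-point. I would start from the elementary observation that $\varphi$ is linear if and only if $d_\varphi=\infty$; hence ``$\varphi$ not linear'' means precisely $d_\varphi<\infty$, which is exactly the hypothesis of Theorem~\ref{th:OLlunsq}.

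So assume $d_\varphi<\infty$ and let $x=a\chi_A\in S_{(\Lambda_{\varphi,w})_a}\subseteq S_{\Lambda_{\varphi,w}}$ be the locally uniformly nonsquare point of $(\Lambda_{\varphi,w})_a$ supplied by Theorem~\ref{th:OLlunsq}, with constant $\delta_0>0$. If in addition $\varphi$ satisfies the appropriate $\Delta_2$-condition, then $(\Lambda_{\varphi,w})_a=\Lambda_{\varphi,w}$, so $x$ is a locally uniformly nonsquare point of the whole space and Theorem~\ref{prop:nod} shows at once that $x$ is not a $\Delta$-point; this disposes of that subcase. The remaining case, where $\varphi$ fails the appropriate $\Delta_2$-condition and $(\Lambda_{\varphi,w})_a$ is a proper subspace of $\Lambda_{\varphi,w}$, is where the real work lies; there I would argue by contradiction, assuming $x$ to be a $\Delta$-point of $\Lambda_{\varphi,w}$ and adapting the proof of Theorem~\ref{prop:nod} so that it stays inside $(\Lambda_{\varphi,w})_a$.

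Concretely: by the Fatou property of $\Lambda_{\varphi,w}$ together with $(\Lambda_{\varphi,w})_a=(\Lambda_{\varphi,w})_b$, the K\"othe dual $(\Lambda_{\varphi,w})'\cong((\Lambda_{\varphi,w})_a)^*$ sits isometrically in $(\Lambda_{\varphi,w})^*$, so $x$ has a \emph{regular} norming functional $x^*\in S_{(\Lambda_{\varphi,w})^*}$, and $x\in S(x^*,\alpha)$ for every $\alpha>0$. Invoking the $\Delta$-point property through $x^*$ gives, for prescribed $\alpha,\varepsilon>0$, a point $y\in B_{\Lambda_{\varphi,w}}$ with $x^*y>1-\alpha$ and $\|x-y\|\ge 2-\varepsilon$. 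The delicate step is to replace $y$ by an order-continuous point without losing the relation $\|x-y\|\approx 2$: choosing a norming functional $w^*$ of $x-y$ and writing $w^*=h+s$ with $h$ regular and $s$ singular, the $M$-ideal property of $(\Lambda_{\varphi,w})_a$ in $\Lambda_{\varphi,w}$ (Theorem~\ref{th: MidealOL}) yields $1=\|w^*\|=\|h\|+\|s\|$; from $h(x)=w^*(x)\ge 1-\varepsilon$ we get $\|s\|\le\varepsilon$ and hence $h(x-y)\ge 2-3\varepsilon$. Truncating $y$ to $y_k=y\chi_{\{1/k\le|y|\le k\}}$, which is bounded with support of finite measure and therefore belongs to $B_{(\Lambda_{\varphi,w})_a}$, and passing to the limit by dominated convergence against the regular functionals $x^*$ and $h$, I would fix $k$ with $x^*y_k>1-2\alpha$ and $\|x-y_k\|\ge h(x-y_k)>2-4\varepsilon$. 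Then $u:=y_k/\|y_k\|\in S_{(\Lambda_{\varphi,w})_a}$, and since $\|y_k\|\ge x^*y_k>1-2\alpha$ one gets $\|x-u\|>2-4\varepsilon-2\alpha$ while $x^*u>1-2\alpha$ forces $\|x+u\|\ge x^*(x+u)>2-2\alpha$. Choosing $\alpha,\varepsilon$ with $4\varepsilon+2\alpha<\delta_0$ contradicts that $x$ is a locally uniformly nonsquare point of $(\Lambda_{\varphi,w})_a$. Consequently $x$ is not a $\Delta$-point of $\Lambda_{\varphi,w}$, so $\Lambda_{\varphi,w}$ has none of the DLD2P, the DD2P, or the Daugavet property, and the conclusion follows.

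The step I expect to be the main obstacle is exactly this reconciliation of scales. Theorem~\ref{th:OLlunsq} only produces a locally uniformly nonsquare point of the order-continuous \emph{subspace} $(\Lambda_{\varphi,w})_a$, whereas the $\Delta$-point hypothesis concerns all of $\Lambda_{\varphi,w}$, whose unit ball contains many non-order-continuous elements once the appropriate $\Delta_2$-condition fails; one must therefore show that the ``almost square'' behaviour forced by the $\Delta$-point property can be realized \emph{inside} $(\Lambda_{\varphi,w})_a$. The $M$-ideal structure of Theorem~\ref{th: MidealOL} is what makes the singular parts of the separating functionals negligible, so that the truncation $y\mapsto y_k$ preserves $\|x-y_k\|\approx 2$; without that input the argument would not close. (The degenerate case $a_\varphi>0$ can be handled by the same scheme once one checks that $(\Lambda_{\varphi,w})_a$ is still an $M$-ideal, the modular estimates underlying Theorem~\ref{th:OLlunsq} being unaffected.)
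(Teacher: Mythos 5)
Your proposal is correct and follows the same skeleton as the paper's proof: reduce all three properties to the DLD2P, i.e.\ to every unit vector being a $\Delta$-point, and then play the locally uniformly nonsquare point of $(\Lambda_{\varphi,w})_a$ supplied by Theorem~\ref{th:OLlunsq} against Theorem~\ref{prop:nod}, bridging the gap between $\Lambda_{\varphi,w}$ and its order-continuous subspace via the M-ideal structure of Theorem~\ref{th: MidealOL}. The only real divergence is in how that bridge is built: the paper simply cites the stability result of Langemets and Pirk \cite{LP} that the DLD2P is inherited by M-ideals, concludes that $(\Lambda_{\varphi,w})_a$ has the DLD2P, and applies Theorem~\ref{prop:nod} inside the subspace; you instead reprove the needed transfer by hand for the specific point $x=a\chi_A$, using the $\ell_1$-decomposition of $(\Lambda_{\varphi,w})^*$ into regular and singular parts to show the singular component of an (almost) norming functional of $x-y$ is negligible, and then truncating $y$ into $(\Lambda_{\varphi,w})_a$. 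Your hands-on argument goes through (with the minor caveat that $x-y$ need not attain its norm, so one should work with almost-norming functionals, which only perturbs the estimates) and has the virtue of being self-contained, but it establishes nothing beyond what the citation of \cite{LP} already delivers. The degenerate case $a_\varphi>0$, which you relegate to a parenthesis, is handled by the paper as a separate first step showing $a_\varphi=0$ — though it, too, quietly invokes the M-ideal inheritance there, so neither argument fully escapes the need to know that $(\Lambda_{\varphi,w})_a$ is an M-ideal when $\varphi$ is degenerate.
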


\begin{proof}
	First, we show that $a_{\varphi} = 0$. Indeed, assume to the contrary that $a_{\varphi} > 0$. Then $d_{\varphi} = a_\varphi$. Since the DLD2P is inherited by M-ideals \cite{LP}, if $\Lambda_{\varphi,w}$ has the DLD2P, then $(\Lambda_{\varphi, w})_a$ also has the DLD2P. Hence, every point on $S_{(\Lambda_{\varphi,w})_a}$ is a $\Delta$-point. This  implies that no point on  $S_{(\Lambda_{\varphi,w})_a}$ can be a locally uniformly nonsquare point by Theorem \ref{prop:nod}. So in view of Theorem \ref{th:OLlunsq} we have $d_{\varphi} = a_\varphi = \infty$. However, this is a contradiction because the Orlicz function $\varphi$ is not identically zero.
	
	Now, assume that $0= a_{\varphi} \leq d_{\varphi}$. Again, if $\Lambda_{\varphi, w}$ has the DLD2P, then $(\Lambda_{\varphi, w})_a$ has the DLD2P, and so $S_{(\Lambda_{\varphi,w})_a}$ does not have locally uniformly nonsquare points by the same reasoning with Theorem \ref{prop:nod}. This implies that $d_\varphi = \infty$ by Theorem \ref{th:OLlunsq}. Hence, we must have $\varphi(u) = ku$ for some $k \geq 0$ and for every $u \geq 0$. Furthermore, notice that $k > 0$ because $a_\varphi = 0$. Therefore, the Orlicz function $\varphi$ must be a linear function.  
	
	The same statement holds for the Daugavet property and the DD2P because any Banach space with the either property has the DLD2P.
\end{proof}

Recall that  a Banach lattice $X$ is said to be {\it uniformly monotone} if for every $\epsilon  > 0$, there exists $\delta(\epsilon) > 0$ such that for all $0\leq y \leq x$ with $\|x\| = 1$ and $\|y\| \geq \epsilon$ implies $\|x -y\| \leq 1 - \delta(\epsilon)$. 

\begin{Theorem}\label{th:OLDauUN}
	Let $\varphi$ be an Orlicz function and let $w$ be a weight function. If the weight function $w$ is regular, then the Orlicz-Lorentz space $\Lambda_{\varphi,w}$ with the Daugavet property is isometrically isomorphic to $L_1$.  If $W(1) =1$ then $\Lambda_{\varphi,w}=L_1$ with equality of norms.
\end{Theorem}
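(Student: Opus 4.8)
The plan is to reduce, by means of Theorem~\ref{cor:OLnoDP}, to the Lorentz space $\Lambda_{1,w}$, and then to combine the known classification of rearrangement invariant spaces with the Daugavet property with the monotonicity that regularity of $w$ forces on $\Lambda_{1,w}$. First, assume $\Lambda_{\varphi,w}$ has the Daugavet property. By Theorem~\ref{cor:OLnoDP} the function $\varphi$ is linear, say $\varphi(u)=ku$ with $k>0$; then $\rho_{\varphi,w}(f)=k\int_I f^{*}w=k\|f\|_{1,w}$, so $\|f\|_{\varphi,w}=k\|f\|_{1,w}$, and $\Lambda_{\varphi,w}$ agrees, up to the positive scalar $k$, with the Lorentz function space $\Lambda_{1,w}$. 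In particular it is rearrangement invariant, and since a linear Orlicz function satisfies the appropriate $\Delta_2$-condition, the space is order continuous by \cite[Theorem 2.4]{K}. It thus suffices to show that $\Lambda_{1,w}$, assuming it has the Daugavet property and $w$ is regular, is isometrically isomorphic to $L_1(I)$.

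When $\gamma<\infty$ this follows from the general r.i.\ case: $\Lambda_{1,w}(0,\gamma)$ is a rearrangement invariant Banach function space on a finite interval with the Daugavet property, so by \cite[Corollary 4.9]{KMMW} it is isometric to $L_1(0,\gamma)$ or to $L_\infty(0,\gamma)$, and the second possibility is excluded because the space is order continuous whereas $L_\infty(0,\gamma)$ is not. Hence it is isometric to $L_1(0,\gamma)$; since this identifies $\Lambda_{1,w}(0,\gamma)$ isometrically with an $L_1$-space while the ambient lattice structure is rearrangement invariant, its norm must be a scalar multiple of $\|\cdot\|_1$, so $W(t)=W(1)\,t$ and $w$ is a.e.\ constant. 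Regularity of $w$ plays no role here and is automatic.

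The case $\gamma=\infty$ is the heart of the matter, and the place where regularity of $w$ is used essentially: the finite-interval classification is unavailable, and indeed it is open whether a rearrangement invariant space with the Daugavet property over an infinite measure must be $L_1$. The plan is to use that regularity of $w$ makes $\Lambda_{1,w}(0,\infty)$ \emph{uniformly monotone} (which is why that notion is recalled just before the theorem; it follows from monotonicity results for Lorentz spaces of the type in \cite{FK}), and then to combine uniform monotonicity with the slice form of the Daugavet property in Lemma~\ref{lem:Daug}. Concretely, for fixed $t<\gamma$ set $e_t=\chi_{(0,t)}/W(t)\in S_{\Lambda_{1,w}}$ and take the natural norming functional of $e_t$, a suitable multiple of $\chi_{(0,t)}$ in the K\"othe dual; the Daugavet property supplies, for every $\epsilon>0$, a unit vector $y$ that almost maximizes this functional and satisfies $\|e_t+y\|_{1,w}>2-\epsilon$. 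Uniform monotonicity then forces the restriction of $y$ to the complement of $(0,t)$ to be negligible and the restriction of $y$ to $(0,t)$ to be essentially a nonnegative multiple of $\chi_{(0,t)}$, and letting $\epsilon\to 0$ this is consistent only if $t\mapsto W(t)$ is linear, i.e.\ $w$ is a.e.\ constant. Alternatively one invokes the infinite-measure version of the classification in \cite{AKM} and verifies that regularity of $w$ secures its hypotheses. Either way $w$ is a.e.\ constant, so $\|f\|_{1,w}=c\|f\|_1$ and $\Lambda_{1,w}(0,\infty)$ is isometrically isomorphic to $L_1(0,\infty)$.

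For the refined statement: since $\varphi$ is linear we may take $\varphi(u)=u$ (the standard normalization), so the Luxemburg norm on $\Lambda_{\varphi,w}$ is exactly $\|\cdot\|_{1,w}$, and by the previous steps this equals $c\,\|\cdot\|_1$ with $c=W(1)$ (using $w\equiv c$). Hence if $W(1)=1$ then $\|f\|_{\varphi,w}=\|f\|_1$ for every $f$, i.e.\ $\Lambda_{\varphi,w}=L_1$ with equality of norms. The step I expect to be genuinely difficult is the infinite-interval one: extracting, from uniform monotonicity together with the Daugavet property, that the fundamental function $t\mapsto W(t)$ of $\Lambda_{1,w}(0,\infty)$ must be linear.
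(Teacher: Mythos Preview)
Your reduction to $\Lambda_{1,w}$ via Theorem~\ref{cor:OLnoDP} is exactly the paper's first step, and your ``alternative'' for $\gamma=\infty$ (invoke the classification in \cite{AKM} once uniform monotonicity is in hand) is precisely the paper's argument, applied uniformly without any case split: the paper simply observes that regularity of $w$ is equivalent to uniform monotonicity of $\Lambda_{1,w}$ by \cite[Theorem~1]{HK} (not \cite{FK}), and then applies \cite[Theorem~4.4]{AKM} directly to conclude that $\Lambda_{1,w}$ is isometrically isomorphic to $L_1$. The separate treatment of $\gamma<\infty$ via \cite{KMMW} is correct but unnecessary, and your ``primary'' direct approach for $\gamma=\infty$---forcing $W$ to be linear from the slice description of the Daugavet property plus uniform monotonicity---is, as you yourself flag, not worked out; the paper avoids it entirely by citing \cite{AKM}.

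For the refined statement you take a different and somewhat shakier route. You argue that the isometry forces $w$ to be a.e.\ constant and then read off $c=W(1)$; but the step ``isometric to $L_1$ $\Rightarrow$ norm is a scalar multiple of $\|\cdot\|_1$'' is not justified (a Banach-space isometry need not respect the r.i.\ structure), and in any case your derivation of $w\equiv c$ in the infinite case rests on the incomplete direct argument. The paper instead appeals to \cite[Theorem~4.1]{AKM}, which under the normalization $W(1)=1$ yields $\Lambda_{\varphi,w}=L_1$ with equality of norms directly. Also note that you cannot ``take $\varphi(u)=u$'': $\varphi$ is given, and if $\varphi(u)=ku$ then $\|\cdot\|_{\varphi,w}=k\|\cdot\|_{1,w}$, so the scalar $k$ must be handled by the cited result rather than normalized away.
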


\begin{proof}
	By Theorem \ref{cor:OLnoDP}, if the Orlicz-Lorentz space $\Lambda_{\varphi,w}$ has the Daugavet property, then $\varphi(u) = ku$ for some $k > 0$. Hence the space $\Lambda_{\varphi,w}$ coincides with $\Lambda_{1, w}$ as a set and $\|\cdot\|_{\varphi,w} = k\|\cdot\|_{1,w}$. Moreover, any Lorentz space $\Lambda_{1,w}$ is uniformly monotone if and only if the weight function  $w$ is regular \cite[Theorem 1]{HK}. So by \cite[Theorem 4.4]{AKM},  $\Lambda_{1, w}$ must be isometrically isomorphic to $L_1$.  If in particular $W(1) =1$ then by Theorem 4.1 in \cite{AKM}, $\Lambda_{\varphi,w}=L_1$ with equality of norms.
\end{proof}	
In the next result we state necessary conditions for $\Lambda_{w,\varphi}$ having the Daugavet property.
\begin{Theorem}
	Let $\varphi$ be an Orlicz function and let $w$ be a weight function. If an Orlicz-Lorentz space $\Lambda_{\varphi,w}$ has the DLD2P,  or DD2P, or the Daugavet property, then the Orlicz function $\varphi$ is linear and $\lim_{t \rightarrow 0+}\frac{t}{W(t)} > 0$. 
\end{Theorem}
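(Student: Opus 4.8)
The plan is to assemble results already proved. Since the Daugavet property implies the DD2P and the DD2P implies the DLD2P, it suffices to treat the case where $\Lambda_{\varphi,w}$ has the DLD2P. By Theorem \ref{cor:OLnoDP} this hypothesis already forces $\varphi$ to be linear, say $\varphi(u)=ku$ with $k>0$, which is the first assertion; it therefore remains only to show $\lim_{t\to 0+}t/W(t)>0$.

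For the second assertion I would argue by contradiction, assuming $\lim_{t\to 0+}t/W(t)=0$. Because $\varphi(u)=ku$, the space $\Lambda_{\varphi,w}$ coincides with the Lorentz space $\Lambda_{1,w}$ as a set and $\|\cdot\|_{\varphi,w}=k\,\|\cdot\|_{1,w}$, so $f\mapsto kf$ is an isometric isomorphism of $\Lambda_{\varphi,w}$ onto $\Lambda_{1,w}$, and hence $\Lambda_{1,w}$ also has the DLD2P. On the other hand, Theorem \ref{th:lorRNPchar} tells us that the condition $\lim_{t\to 0+}t/W(t)=0$ is precisely the one under which $\Lambda_{1,w}$ has the Radon-Nikod\'ym property; so $\Lambda_{1,w}$ has the RNP and therefore possesses slices of its unit ball of arbitrarily small diameter, in particular of diameter strictly less than two.

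To close the argument I would invoke the standard implication DLD2P $\Rightarrow$ LD2P. Concretely, given a nonempty slice $S=S(x^*,\delta)$ with $x^*\in S_{X^*}$, one picks $x\in S\cap S_X$ (normalizing a point of $S$ on which $x^*$ is positive); since every point of $S_X$ is a $\Delta$-point, $x$ lies, for each small $\eta>0$, within $\eta$ of a convex combination $\sum_i\lambda_i y_i$ with $y_i\in\Delta_\eta(x)\subset B_X$, and evaluating $x^*$ shows that some $y_{i_0}$ still belongs to $S$ while $\|x-y_{i_0}\|\geq 2-\eta$; letting $\eta\to 0$ yields that $S$ has diameter two. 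Thus $\Lambda_{1,w}$ has the LD2P, contradicting the previous paragraph. Hence $\lim_{t\to 0+}t/W(t)\neq 0$, and since $W(t)/t$ is the average over $[0,t]$ of the decreasing weight $w$, the function $t\mapsto t/W(t)$ is nonnegative and nondecreasing, so its limit at $0^+$ exists in $[0,\infty)$ and is therefore strictly positive.

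Essentially every step is a direct citation of earlier results; the only mildly delicate point is the passage DLD2P $\Rightarrow$ LD2P (equivalently, the incompatibility of the DLD2P with the RNP), but this is routine and, if desired, can simply be quoted from the literature. I do not anticipate a genuine obstacle.
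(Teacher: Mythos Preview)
Your proof is correct and follows essentially the same route as the paper's: linearity of $\varphi$ from Theorem~\ref{cor:OLnoDP}, then a contradiction with the RNP of $\Lambda_{1,w}$ when $\lim_{t\to 0^+}t/W(t)=0$. The only cosmetic difference is that the paper cites the external result \cite[Proposition~4.1]{AKM2} (that $\Lambda_{1,w}$ is a separable dual, hence has the RNP) and invokes the incompatibility of the DLD2P with the RNP directly, whereas you appeal to the paper's own Theorem~\ref{th:lorRNPchar} and spell out the DLD2P $\Rightarrow$ LD2P step.
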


\begin{proof}
	By Theorem \ref{cor:OLnoDP}, if the Orlicz-Lorentz space $\Lambda_{\varphi,w}$ has the Daugavet property, then the Orlicz function is linear. So the space $\Lambda_{\varphi,w}$ is isometrically isomorphic to $ \Lambda_{1,w}$. If $\lim_{t \rightarrow 0+}\frac{t}{W(t)} = 0$, then the Lorentz space $\Lambda_{1,w}$ is a separable dual space \cite[Proposition 4.1]{AKM2}, so the space has the RNP. Since every Banach space with the DLD2P does not have the RNP, we must have $\lim_{t \rightarrow 0+}\frac{t}{W(t)} > 0$. 
\end{proof}

We see that the RNP only rules out a certain class of $\Lambda_{1, w}$ without the Daugavet property and the diametral D2Ps. The answer to the following question will lead us to a characterization of Orlicz-Lorentz spaces with the Daugavet property.

\begin{Problem}
	Let $\gamma = \infty$ and assume that the weight function $w$ is not regular or $\lim_{t \rightarrow 0+}\frac{t}{W(t)} > 0$. Can the Lorentz space $\Lambda_{1,w}$ have the Daugavet property, DD2P, or DLD2P?
\end{Problem}

On the other hand, in view of Theorem \ref{prop:nod}, the existence of locally uniformly nonsquare points not only provides the characterization of the Daugavet property of Orlicz function and sequence spaces equipped with the Luxemburg norm \cite{KLT} but also the characterization of such spaces with the DD2P and DLD2P. Here we provide a more general statement. It is well-known that every Banach space with the Daugavet property has an isomorphic copy of $\ell_1$ \cite[Theorem 2.9]{KSSW}, but here we have something stronger for the order-continuous subspace $(L_{\varphi})_a$.
 
\begin{Corollary}
	Let $\varphi$ be an Orlicz function and $\mu$ be a nonatomic, $\sigma$-finite measure. Consider the following statements for $L_{\varphi} = L_{\varphi}(\Omega, \Sigma, \mu)$.
	\begin{enumerate}
		\item $L_{\varphi}$ has the Daugavet property.
		\item $L_{\varphi}$ has the DD2P.
		\item $L_{\varphi}$ has the DLD2P.
		\item $\varphi$ is linear on $[0,\infty)$ that is $L_{\varphi}$ is isometrically isomorphic to $L_1$.
		\item $(L_{\varphi})_a$ contains an order-isometric copy of $\ell_1$.
	\end{enumerate}
	Then 
	\begin{enumerate}[\rm(i)]
		\item When $\mu(\Omega) < \infty$, the statements (1)-(5) are equivalent.
		\item When $\mu(\Omega) = \infty$, we have (1) $\iff$ (2) $\iff$ (3) $\iff$ (4) $\implies$ (5). 
	\end{enumerate}
	On the other hand, no Orlicz sequence space $\ell_{\varphi}$ has the Daugavet property, DD2P, or DLD2P.   
\end{Corollary}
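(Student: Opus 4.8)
For the function space I would first reduce everything to the interval: the Daugavet property, the DD2P, the DLD2P and the RNP are isometric invariants, and $L_\varphi(\Omega,\Sigma,\mu)$ is isometrically lattice isomorphic to $L_\varphi(0,\mu(\Omega))$ (rearrangement invariance, cf.\ the discussion preceding Corollary~\ref{th:equivD2POrlicz}), so I may take $L_\varphi=\Lambda_{\varphi,w}$ with $w\equiv 1$ on $I=[0,\mu(\Omega))$, which is a legitimate weight since $\int_0^\infty 1=\infty$. Then $(1)\Rightarrow(2)\Rightarrow(3)$ is the implication chain recalled in the Introduction. For $(3)\Rightarrow(4)$ I would apply Theorem~\ref{cor:OLnoDP} with $w\equiv 1$: already the DLD2P forces $\varphi$ to be linear, say $\varphi(u)=ku$ with $k>0$, and then $\|\cdot\|_{L_\varphi}=k\|\cdot\|_1$, so $L_\varphi$ is isometrically isomorphic to $L_1$. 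For $(4)\Rightarrow(1)$ I would use that $L_1$ over a nonatomic measure space has the Daugavet property and that this is transported by isometric isomorphisms. This yields $(1)\Leftrightarrow(2)\Leftrightarrow(3)\Leftrightarrow(4)$ in both cases. Finally $(4)\Rightarrow(5)$: when $\varphi$ is linear, $L_\varphi=L_1$ is order continuous so $(L_\varphi)_a=L_\varphi$, and the normalised indicators of a sequence of pairwise disjoint sets of finite positive measure span a closed sublattice lattice-isometric to $\ell_1$.

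The one genuinely new implication is $(5)\Rightarrow(4)$ under $\mu(\Omega)<\infty$, and this is the step I expect to be the main obstacle. An order-isometric copy of $\ell_1$ inside $(L_\varphi)_a$ is generated by pairwise disjoint functions $x_m\ge 0$ in $(L_\varphi)_a$ with $\|x_m\|_{L_\varphi}=1$ and $\|\sum_m a_mx_m\|_{L_\varphi}=\sum_m|a_m|$. Fixing $m\ne n$ and using $\|tx_m+(1-t)x_n\|_{L_\varphi}=1$ for all $t\in[0,1]$, together with the fact that for elements of $(L_\varphi)_a$ the modular is finite and continuous along each ray, one upgrades this to $\rho_{\varphi,w}(tx_m+(1-t)x_n)=1$, hence, by disjointness of supports, to $\rho_{\varphi,w}(tx_m)+\rho_{\varphi,w}((1-t)x_n)=1$ on $[0,1]$. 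Writing $g_m(t)=\rho_{\varphi,w}(tx_m)=\int_I\varphi(tx_m)$, this makes $g_m$ convex with $g_m(0)=0$, while the displayed identity also exhibits $g_m$ on $[0,1]$ as $1$ minus a convex function of $1-t$; so $g_m$ is affine there, and $g_m(0)=0$, $g_m(1)=1$ give $g_m(t)=t$ for $t\in[0,1]$. Since $t\mapsto\int_I\varphi(tx_m)$ is affine while $\varphi$ is convex, the equality case of Jensen's inequality forces $\varphi$ to be affine, hence linear with slope $k>0$ (the same $k$ for all $m$, as $\varphi$ is a single function), on $[0,\|x_m\|_\infty]$ (essential supremum). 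Here finiteness of $\mu$ enters: if $M:=\sup_m\|x_m\|_\infty$ were finite, then $x_m\le M$ and $k\|x_m\|_1=\rho_{\varphi,w}(x_m)=1$ would give $\mu(\supp x_m)\ge 1/(kM)$ for every $m$, and pairwise disjointness of the supports would force $\mu(\Omega)=\infty$, a contradiction; so $M=\infty$ and $\varphi(u)=ku$ on $[0,\infty)$, which is $(4)$. (The same picture explains why $(5)\Rightarrow(4)$ fails for $\mu(\Omega)=\infty$: for an $\varphi$ that is linear near $0$ but not globally, disjoint indicators $\chi_{A_n}$ with $\mu(A_n)\equiv 1$ still span an order-isometric $\ell_1$ inside $(L_\varphi)_a$.) Assembling the implications gives (i) and (ii).

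For the sequence space I would show that no $\ell_\varphi$ has the DLD2P, which also excludes the DD2P and the Daugavet property. Put $c=\varphi^{-1}(1):=\sup\{u:\varphi(u)\le 1\}$, so $ce_1\in S_{\ell_\varphi}$ with $\alpha_{\varphi,w}(ce_1)=\varphi(c)=1$ (here $w\equiv 1$, so $\alpha_{\varphi,w}(x)=\sum_i\varphi(|x(i)|)$). If the restriction of $\varphi$ to $[0,c]$ is linear, then, exactly as on the interval and using $|x(i)|\le\|x\|_1$, one gets $\|x\|_{\ell_\varphi}=\tfrac1c\|x\|_1$ for every $x$, so $\ell_\varphi$ is isometrically isomorphic to $\ell_1$; since $\ell_1$ has the RNP and a Banach space with the DLD2P has no RNP, $\ell_\varphi$ fails the DLD2P. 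If $\varphi$ is not linear on $[0,c]$, then $\sigma:=2\varphi(c/2)<1$ by strict convexity, and I claim $x_0=ce_1$ is a locally uniformly nonsquare point of $\ell_\varphi$. Given $y\in S_{\ell_\varphi}$ one has $\alpha_{\varphi,w}(y)\le 1$, hence $|y(i)|\le c$ for every $i$; after replacing $y$ by $-y$ assume $y(1)\ge 0$ and set $z:=(x_0-y)/2$. Then $\alpha_{\varphi,w}(z)\le\varphi(c/2)+\tfrac12\alpha_{\varphi,w}(y)\le\tfrac12(\sigma+1)<1$ and $\alpha_{\varphi,w}(2z)\le\varphi(c)+\alpha_{\varphi,w}(y)\le 2$. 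Using convexity in the form $\varphi((1+\delta')u)\le(1-\delta')\varphi(u)+\delta'\varphi(2u)$ and summing, one obtains a uniform $\delta'>0$ with $\alpha_{\varphi,w}((1+\delta')z)\le 1$, whence $\|x_0-y\|_{\ell_\varphi}=2\|z\|_{\ell_\varphi}\le 2/(1+\delta')=:2-\delta$ with $\delta>0$ depending only on $\sigma$. Therefore $\min\{\|x_0+y\|_{\ell_\varphi},\|x_0-y\|_{\ell_\varphi}\}\le 2-\delta$ for all $y\in S_{\ell_\varphi}$, so by Theorem~\ref{prop:nod} $x_0$ is not a $\Delta$-point and $\ell_\varphi$ does not have the DLD2P.
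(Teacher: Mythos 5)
Your proof is correct, but it is considerably more self-contained than the paper's, which at the corresponding points simply cites earlier work: the equivalence $(1)\Leftrightarrow(4)$ and the existence of a locally uniformly nonsquare point in $(L_\varphi)_a$ when $d_\varphi<\infty$ are quoted from \cite{KLT} (Theorems 4.12 and 4.11.(i)), and the characterization of when $(L_\varphi)_a$ contains an order-isometric copy of $\ell_1$ is quoted from \cite{KM} (Corollaries 3.2 and 3.3); the sequence-space assertion is left implicit there. You instead (a) route $(3)\Rightarrow(4)$ through Theorem \ref{cor:OLnoDP} with $w\equiv 1$, which is essentially the paper's route since that theorem is itself proved via Theorems \ref{prop:nod} and \ref{th:OLlunsq}; (b) reprove the relevant half of \cite[Corollary 3.3]{KM} directly, by showing that disjointness plus the isometry force $t\mapsto\rho_{\varphi,w}(tx_m)$ to be affine and then extracting linearity of $\varphi$ from the equality case of convexity, with finiteness of $\mu$ entering exactly where it should (to force $\sup_m\|x_m\|_\infty=\infty$); and (c) exhibit an explicit locally uniformly nonsquare point $ce_1$ in $\ell_\varphi$ (or reduce to $\ell_1$ and invoke the RNP) to rule out the DLD2P in the sequence case. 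All three pieces are sound --- in particular the estimate $\alpha_{\varphi,w}((1+\delta')z)\le(1-\delta')\alpha_{\varphi,w}(z)+\delta'\alpha_{\varphi,w}(2z)$ with $\delta'=(1-\sigma)/(3-\sigma)$ does yield a uniform $\delta>0$ --- and your version buys independence from \cite{KM} and an explicit sequence-space argument at the cost of length.

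Two small points deserve attention. First, the opening reduction ``$L_\varphi(\Omega,\Sigma,\mu)$ is isometric to $L_\varphi(0,\mu(\Omega))$'' is only literally valid for separable (standard) measure spaces; for a general nonatomic $\sigma$-finite $\mu$ you should instead note that every construction you use is local (a single set of prescribed finite measure, or a sequence of pairwise disjoint such sets, which exist by nonatomicity), so that the arguments behind Theorems \ref{th:OLlunsq} and \ref{cor:OLnoDP} transfer verbatim; this is also how the paper justifies Corollary \ref{th:equivD2POrlicz}. Second, your $(5)\Rightarrow(4)$ argument starts from the $x_m$ being pairwise disjoint and positive; this is the standard reading of ``order-isometric copy'' (a closed sublattice lattice-isometric to $\ell_1$, as in \cite{KM}), but it is worth stating explicitly, since a mere order isomorphism onto a range that is not a sublattice would not immediately give disjointness of the $x_m$ inside $L_\varphi$.
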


\begin{proof}
	The equivalence of $(1)\iff(4)$ has been shown in  \cite[Theorem 4.12]{KLT}. The implication $(1) \implies (2) \implies (3)$ is clear from their definitions. In view of Theorem \ref{prop:nod}, if $L_{\varphi}$ has the DLD2P, then the space does not have locally uniformly nonsquare point. Hence, we see that $d_{\varphi} = \infty$ by \cite[Theorem 4.11.(i)]{KLT}, in other words, the Orlicz function $\varphi$ is a linear function. 
	
	For (i), it is well-known that the space $(L_{\varphi})_a$ contains an order-isometric copy of $\ell_1$ if and only if $\varphi$ is linear on $[0,\infty)$ that is $L_{\varphi}$ is isometrically isomorphic to $L_1$ \cite[Corollary 3.3]{KM}. For (ii), since the space $(L_{\varphi})_a$ contains an order-isometric copy of $\ell_1$ if and only if $\varphi$ is linear around a neighborhood of zero \cite[Corollary 3.2]{KM}, we have $(4) \implies (5)$.
\end{proof}    
 
 We finish this article with an additional observation from Theorem \ref{th:OLlunsq}. In view of Theorem \ref{th:equivD2P}, the K\"othe dual space $\mathcal{M}_{\varphi,w}^0$ of Orlicz-Lorentz space is octahedral, weakly octahedral, or locally octahedral, if and only if a finite N-function at infinity $\varphi_*$ does not satisfy the appropriate $\Delta_2$-condition. Now, we show that this space never satisfies the LD2P.
 
\begin{Theorem}\label{th:KothenoLD2P}
	Let $\varphi$ be an N-function at infinity and let $w$ be a weight function. Then the space $\mathcal{M}_{\varphi, w}^0$ does not have the LD2P. As a consequence, the space $\mathcal{M}_{\varphi, w}^0$ does not have the D2P, the SD2P, the diametral D2Ps, and the Daugavet property. 
\end{Theorem}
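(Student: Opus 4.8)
The plan is to exhibit $\mathcal{M}_{\varphi,w}^0$ as the dual of the order-continuous subspace $(\Lambda_{\varphi_*,w})_a$ and then to rule out the LD2P through the duality between the weak$^*$-LD2P and local octahedrality, using that $(\Lambda_{\varphi_*,w})_a$ carries a locally uniformly nonsquare point supplied by Theorem \ref{th:OLlunsq}.

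First I would check that Theorem \ref{th:OLlunsq} applies to $\Lambda_{\varphi_*,w}$. Since $\varphi$ is a (finite) N-function at infinity, its complementary function $\varphi_*$ is finite-valued, so $b_{\varphi_*} = \infty$ (this is \cite[Lemma 3.4(a)]{KLT}, already used in the proof of Theorem \ref{th:RNPsuff}); moreover $\varphi_*$ cannot be linear on all of $[0,\infty)$, for otherwise $\varphi = \varphi_{**}$ would take the value $+\infty$, contradicting that $\varphi$ is a genuine Orlicz function. Hence $d_{\varphi_*} < \infty = b_{\varphi_*}$, and Theorem \ref{th:OLlunsq} produces a locally uniformly nonsquare point in $(\Lambda_{\varphi_*,w})_a$. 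By the description of local octahedrality recalled in Section 4, this shows that $(\Lambda_{\varphi_*,w})_a$ is \emph{not} locally octahedral.

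Next I would identify the space. Orlicz--Lorentz function spaces have the Fatou property and satisfy $(\Lambda_{\varphi_*,w})_a = (\Lambda_{\varphi_*,w})_b$, so by \cite[Corollary 1.4.2]{BS} the K\"othe dual $(\Lambda_{\varphi_*,w})'$ is isometrically isomorphic to $((\Lambda_{\varphi_*,w})_a)^*$; and by Theorem \ref{th:KLR} together with $\varphi_{**} = \varphi$ we have $(\Lambda_{\varphi_*,w})' = \mathcal{M}_{\varphi,w}^0$. Now invoke the duality used in Section 4 \cite{HLP}: a Banach space $Y$ is locally octahedral if and only if $Y^*$ has the weak$^*$-LD2P. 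Taking $Y = (\Lambda_{\varphi_*,w})_a$, which is not locally octahedral, we conclude that $\mathcal{M}_{\varphi,w}^0 = Y^*$ fails the weak$^*$-LD2P. Since every weak$^*$-slice of $B_{Y^*}$ is in particular a slice, the LD2P implies the weak$^*$-LD2P, hence $\mathcal{M}_{\varphi,w}^0$ does not have the LD2P. Finally, because the D2P, the SD2P, the DLD2P, the DD2P and the Daugavet property all imply the LD2P, the space $\mathcal{M}_{\varphi,w}^0$ has none of them.

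The only step requiring care is the preparatory bookkeeping that licenses Theorem \ref{th:OLlunsq} --- verifying $b_{\varphi_*} = \infty$ and $d_{\varphi_*} < \infty$ from the hypothesis on $\varphi$ --- together with pinning down the predual $(\Lambda_{\varphi_*,w})_a$ via the Köthe duality and applying the octahedrality duality in the correct direction; the remainder is a routine chain of implications among the diameter two properties.
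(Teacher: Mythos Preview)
Your proposal is correct and follows essentially the same route as the paper: both verify $b_{\varphi_*}=\infty$ and $d_{\varphi_*}<\infty$, invoke Theorem~\ref{th:OLlunsq} to obtain a locally uniformly nonsquare point in $(\Lambda_{\varphi_*,w})_a$, identify $\mathcal{M}_{\varphi,w}^0$ with $((\Lambda_{\varphi_*,w})_a)^*$, and then pass to a slice of the dual ball of diameter less than two. The only cosmetic difference is in the final step: the paper cites \cite[Theorem~4.5(a)]{KLT} to produce explicitly a $w^*$-slice $S(x,\epsilon)$ of small diameter and then observes via the canonical embedding that this is also a slice, whereas you package the same implication through the \cite{HLP} duality ``$Y$ locally octahedral $\Leftrightarrow$ $Y^*$ has the $w^*$-LD2P'' together with ``LD2P $\Rightarrow$ $w^*$-LD2P''.
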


\begin{proof}
		If $\varphi$ is an N-function at infinity, then its complementary function $\varphi_*$ is finite \cite[Lemma 3.4.(a)]{KLT}. Hence $b_{\varphi_*} = \infty$. Now, we claim that $d_{\varphi_*} < \infty$. Indeed, assume to the contrary that $d_{\varphi_*} =  \infty$. Then $\varphi_*(u) = ku$ for all $u\ge 0$, where $k > 0$ because $\varphi_*$ is not identically a zero function. From the fact that $ \varphi_{**} = \varphi$, this implies that $\varphi(u) = 0$ on the interval $[0, k]$ and $\varphi(u) = \infty$ for every $u > k$. But this leads to a contradiction because the N-function $\varphi$ is finite. 
		
		Hence in view of Theorem \ref{th:OLlunsq},   the space $(\Lambda_{\varphi_*,w})_a$ has a locally uniformly nonsquare point $x \in S_{(\Lambda_{\varphi_*,w})_a}$. Then for a given $\epsilon > 0$, there exists a $w^*$-slice $S(x, \epsilon)$ of $B_{((\Lambda_{\varphi_*,w})_a)^*}$ which diameter is less than two by \cite[Theorem 4.5.(a)]{KLT}. Notice that 
		\[
		((\Lambda_{\varphi_*,w})_a)^* \simeq (\Lambda_{\varphi_*,w})' \simeq \mathcal{M}_{\varphi,w}^0.
		\]
		Now, consider the canonical mapping $J: (\Lambda_{\varphi_*,w})_a \rightarrow ((\Lambda_{\varphi_*,w})_a)^{**}$, where $J(x)(x^*) = x^*x$ for $x^* \in ((\Lambda_{\varphi_*,w})_a)^* \simeq \mathcal{M}_{\varphi,w}^0$. For $\tilde{x} = J(x) \in  S_{(\mathcal{M}_{\varphi,w}^0)^*}$, the set $S(\tilde{x}, \epsilon) = \{y \in B_{\mathcal{M}_{\varphi,w}^0}: \tilde{x}(y) > 1 - \epsilon\}$ is a slice of the unit ball $B_{\mathcal{M}_{\varphi,w}^0}$. Furthermore, we see that
		\[
		S(\tilde{x}, \epsilon)  = \{x^* \in B_{((\Lambda_{\varphi_*,w})_a))^*}: J(x)x^* > 1 - \epsilon\} = S(x, \epsilon),
		\]
		and so $\text{diam}\,S(\tilde{x}, \epsilon) < 2$. Therefore, the space $\mathcal{M}_{\varphi,w}^0$ does not have the LD2P.
		
		Since the D2P, the SD2P, the diametral D2Ps, and the Daugavet property imply the LD2P, the second statement can be obtained immediately. 
\end{proof}

\end{document}